\numberwithin{equation}{section}
\newtheorem{theorem}{Theorem}[section]
\newtheorem*{ThmA}{Theorem A}
\newtheorem{proposition}[theorem]{Proposition}
\newtheorem{corollary}[theorem]{Corollary}
\newtheorem{lemma}[theorem]{Lemma}
\theoremstyle{remark}
\newtheorem{remark}[theorem]{Remark}
\newcommand{\eps}{\varepsilon}
\newcommand{\R}{{\mathbb R}}
\newcommand{\E}{{\mathcal E}}
\renewcommand{\S}{{\mathcal{S}}}
\renewcommand{\v}{{v}}
\newcommand{\beq}{\begin{equation}}
\newcommand{\eeq}{\end{equation}}
\newcommand{\beqa}{\begin{eqnarray}}
\newcommand{\eeqa}{\end{eqnarray}}
\newcommand{\beqn}{\begin{eqnarray}}
\newcommand{\eeqn}{\end{eqnarray}}
\begin{document}
\title[Asymptotic properties of ground states] {Asymptotic properties
  of ground states of scalar field equations with a vanishing
  parameter} \author{Vitaly Moroz} \address{Swansea University\\
  Department of Mathematics\\ Singleton Park\\ Swansea\\ SA2~8PP\\
  Wales, United Kingdom} \email{v.moroz@swansea.ac.uk} \author{Cyrill
  B.  Muratov} \address{Department of Mathematical Sciences\\ New
  Jersey
  Institute of  Technology\\
  University Heights\\ Newark NJ~07102\\ USA} \email{muratov@njit.edu}
\date{\today}

\thanks{The work of C. B. M. was supported, in part, by NSF via grants
  DMS-0718027, DMS-0908279 and DMS-1119724.}

\begin{abstract}
  We study the leading order behavior of positive solutions of the
  equation
  \begin{eqnarray*}
    -\Delta u +\eps u-|u|^{p-2}u+|u|^{q-2}u=0,\qquad
    x\in\R^N,
  \end{eqnarray*}
  where $N\ge 3$, $q>p>2$ and when $\eps>0$ is a small parameter. We
  give a complete characterization of all possible asymptotic regimes
  as a function of $p$, $q$ and $N$. The behavior of solutions depends
  sensitively on whether $p$ is less, equal or bigger than the
  critical Sobolev exponent $p^\ast=\frac{2N}{N-2}$. For $p<p^\ast$
  the solution asymptotically coincides with the solution of the
  equation in which the last term is absent. For $p>p^\ast$ the
  solution asymptotically coincides with the solution of the equation
  with $\eps=0$.  In the most delicate case $p=p^\ast$ the asymptotic
  behavior of the solutions is given by a particular solution of the
  critical Emden--Fowler equation, whose choice depends on $\eps$ in a
  nontrivial way.
\end{abstract}

\subjclass{35J61; 49J40; 35B40}

\keywords{Pokhozhaev identity; Concentration compactness; Asymptotics}

\maketitle


\section{Introduction.}
\label{s:intro}

\subsection{Setting of the problem.}

This paper deals with the analysis of positive solutions of the scalar
field equation
$$
-\Delta u +\eps u-|u|^{p-2}u+|u|^{q-2}u=0\quad\text{in
  $\R^N$,}\leqno{(P_\eps)}
$$
where $N\ge 3$, $q>p>2$ and $\eps>0$. Specifically, we are interested
in the case where $\eps$ is a small parameter, with all other
parameters fixed.  Our goal is to understand the behavior of ground
state solutions of $(P_\eps)$ for $\eps\ll 1$.
By a {\em ground state} solution of $(P_\eps)$ we understand a
positive weak solution $u_\eps \in H^1(\R^N) \cap L^p(\R^N) \cap
L^q(\R^N)$ of $(P_\eps)$. These solutions are critical points
(saddles) of the energy
\beq \E_\eps(u):=\int_{\R^N}\frac{1}{2}|\nabla
u|^2+\frac{\eps}{2}|u|^2\,dx-\frac{1}{p}|u|^p+\frac{1}{q}|u|^q\,dx.\eeq

The existence and uniqueness of ground state solutions of $(P_\eps)$
with $\eps >0$ is well known.  The existence goes back to Strauss
\cite[Example 2]{Strauss} and Berestycki and Lions \cite[Example
2]{BL-I}. Note that by strict convexity of the integrand in
$\E_\eps(u)$ for large $|u|$ every weak solution of $(P_\eps)$ is
essentially bounded, and so by elliptic regularity these are classical
solutions of $(P_\eps)$ that decay uniformly to zero as $|x| \to
\infty$. Then the classical Gidas-Ni-Nirenberg symmetry result
\cite[Theorem 2]{GNN} implies that every ground state solution of
$(P_\eps)$ is spherically symmetric about some point.  The uniqueness
of a spherically symmetric ground state is rather delicate and was
proved only quite recently by Serrin and Tang \cite[Theorem 4
(ii)]{Serrin-Tang}.
The following theorem summarizes all the above results.

\begin{ThmA}[\cite{Strauss,BL-I,GNN,Serrin-Tang}]\label{Thm-0}
  Let $N\ge 3$ and $q>p>2$.  There exists $\eps_\ast>0$ such that
  $(P_\eps)$ has no ground state solutions for $\eps\ge\eps_\ast$,
  while for every $\eps\in(0,\eps_{\ast})$ equation $(P_\eps)$ admits
  a unique ground state solution $u_\eps \in C^\infty(\R^N)$ such that
  $u_\eps(x)$ is a monotone decreasing function of $|x|$ and there
  exists $C_\eps>0$ such that \beq \label{E-exp}
  \lim_{|x|\to\infty}|x|^{\frac{N-1}{2}}e^{\sqrt{\eps}|x|}u_\eps(x)=C_\eps>0.
  \eeq Furthermore, every ground state solution of $(P_\eps)$ is a
  translate of $u_\eps$.
\end{ThmA}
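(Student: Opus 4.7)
The plan is to assemble Theorem A from four essentially classical building blocks, which I would organize as follows.

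\textbf{Existence/nonexistence.} Write $(P_\eps)$ as $-\Delta u = g_\eps(u)$ with $g_\eps(u) = -\eps u + |u|^{p-2}u - |u|^{q-2}u$ and primitive $G_\eps(u) = -\tfrac{\eps}{2}u^2 + \tfrac{1}{p}|u|^p - \tfrac{1}{q}|u|^q$. Define $\eps_\ast := \sup\{\eps > 0 : G_\eps(s) > 0 \text{ for some } s > 0\}$; this quantity can be computed explicitly by elementary calculus on the polynomial $G_\eps$. For $\eps \in (0, \eps_\ast)$ I would follow the Berestycki--Lions scheme: minimize $\tfrac12\int|\nabla u|^2\,dx$ on the constraint manifold $\{u \in H^1(\R^N) : \int G_\eps(u)\,dx = 1\}$, reducing to radial nonincreasing functions via Schwarz symmetrization and using Strauss's compact embedding $H^1_{\mathrm{rad}}(\R^N) \hookrightarrow L^r(\R^N)$ for $2 < r < \frac{2N}{N-2}$ to extract a minimizer; after scaling, the Lagrange multiplier equation produces a positive ground state. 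For $\eps \ge \eps_\ast$ the potential satisfies $G_\eps \leq 0$ pointwise, so the Pokhozhaev identity $\tfrac{N-2}{2}\int|\nabla u|^2\,dx = N\int G_\eps(u)\,dx$ forces $u \equiv 0$.

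\textbf{Regularity and symmetry.} Any weak solution $u_\eps \in H^1 \cap L^p \cap L^q$ lies in $L^\infty(\R^N)$ by Moser iteration, exploiting that $|u|^{q-2}u - |u|^{p-2}u$ has the sign of $u$ for large $|u|$; standard elliptic bootstrap then delivers $u_\eps \in C^\infty(\R^N)$. Uniform decay to zero at infinity follows from the comparison principle applied to $(-\Delta + \tfrac{\eps}{2})u_\eps \leq 0$ on the complement of a large ball, where the nonlinear source is absorbed into $\eps u_\eps$. With smoothness, positivity and decay in hand, the Gidas--Ni--Nirenberg moving-plane method yields spherical symmetry about some point and strict monotonicity of $u_\eps$ in $|x|$.

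\textbf{Uniqueness.} Radial symmetry reduces $(P_\eps)$ to the ODE $u'' + \tfrac{N-1}{r}u' + g_\eps(u) = 0$ with $u'(0) = 0$, $u(r) \to 0$ as $r \to \infty$. I would invoke the Serrin--Tang shooting argument: tracking the number of zeros of the trajectory $u(\cdot;\alpha)$ as a function of the initial height $\alpha = u(0)$, and using that $g_\eps(u)/u$ is monotone past its largest positive root together with a Pohozaev-type radial invariant, one shows that exactly one value of $\alpha$ produces a positive decaying solution. This step is by far the most delicate and is the principal obstacle; the remaining pieces are essentially standard machinery.

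\textbf{Sharp asymptotics.} Since $u_\eps(x) \to 0$ as $|x| \to \infty$, the nonlinear source $|u_\eps|^{p-2}u_\eps - |u_\eps|^{q-2}u_\eps$ is superlinear in $u_\eps$, so $u_\eps$ is asymptotically governed by the linearization $-\Delta u + \eps u = 0$ whose radial decaying fundamental solution is the modified Bessel function $K_{(N-2)/2}(\sqrt{\eps}\,|x|) \sim c\,|x|^{-(N-1)/2}e^{-\sqrt{\eps}|x|}$. To promote this heuristic to \eqref{E-exp} I would represent $u_\eps$ as the convolution of the Green function of $-\Delta + \eps$ with the nonlinear source, and use the exponential decay of the source (at rate $e^{-(p-1)\sqrt{\eps}|x|}$ with $p - 1 > 1$) to show that the convolution factorizes at large $|x|$ into the Bessel asymptotic times an explicit integral which identifies the constant $C_\eps > 0$.
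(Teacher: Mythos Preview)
Your proposal is correct and matches the paper's treatment: Theorem~A is stated there as a compilation of the cited classical results rather than proved, and the surrounding discussion (together with the variational setup in Section~\ref{S-var}) invokes exactly the same ingredients you assemble --- Berestycki--Lions constrained minimization for existence, Pokhozhaev's identity for nonexistence when $\eps\ge\eps_\ast$, elliptic regularity, Gidas--Ni--Nirenberg for radial symmetry, and Serrin--Tang for uniqueness. The one place your outline differs is the $L^\infty$ bound: the paper bypasses Moser iteration by the elementary observation that $f_\eps(u)<0$ for $u\ge 1$, so the maximum principle gives $u_\eps\le 1$ directly; this also motivates the truncation of $f_\eps$ in \eqref{f-eps}, which keeps the Berestycki--Lions compactness argument uniform across subcritical, critical and supercritical $q$.
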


\noindent We note that the threshold value $\eps_\ast$ in Theorem
\ref{Thm-0} is simply the smallest value of $\eps > 0$ for which the
energy $\E_\eps$ is non-negative and can be easily computed
explicitly.

We are interested in the asymptotic behavior of the ground states
$u_\eps$ as $\eps\to 0$. This question naturally arises in the studies
of various bifurcation problems, for which $(P_\eps)$ can be
considered as a canonical normal form (see
e.g. \cite{cross93,vansaarloos92}). Problem $(P_\eps)$ itself may also
be considered as a prototypical example of a bifurcation problem for
elliptic equations. In fact, our results are expected to remain valid
for a broader class of scalar field equations whose nonlinearity has
the leading terms in the expansion around zero which coincide with the
ones in $(P_\eps)$. Let us also mention that problem $(P_\eps)$
appears in the studies of non-classical nucleation near spinodal in
mesoscopic models of phase transitions \cite{cahn59,Muratov,unger84},
as well as in the studies of the decay of false vacuum in quantum
field theories \cite{coleman77}.

In order to understand the asymptotic behavior of $u_\eps$ as $\eps\to
0$, we again note that for $u \geq 1$ the energy density in
$\E_\eps(u)$ is strictly convex.  Hence we may conclude that the
ground state solution $u_\eps$ in Theorem \ref{Thm-0} satisfies a
uniform upper bound \beq\label{bound} u_\eps(0)\le 1\quad\text{for all
  $\eps\in(0,\eps_\ast)$}.  \eeq Elliptic regularity then implies that
locally over compact sets the solution $u_\eps$ converges as $\eps\to
0$ to a radial solution of the limit equation
$$
-\Delta u -|u|^{p-2}u+|u|^{q-2}u=0\quad\text{in $\R^N$}.\leqno{(P_0)}
$$
It is known that (here and everywhere below $p^\ast :=
\frac{2N}{N-2}$):
\begin{itemize}
\item for $2<p\le p^\ast$ equation $(P_0)$ has no finite energy
  solutions, which is a direct consequence of Pokhozhaev's identity
  (see Remark \ref{r:nonexsup});
\item for $p>p^\ast$ equation $(P_0)$ admits a unique radial ground
  state solution.  The existence goes back to \cite[Theorem 4]{BL-I},
  see also \cite{Merle-I,Merle-II}, while the uniqueness was proved in
  \cite{Merle-II,Kwong-plus}.
\end{itemize}
Note that the natural energy space for equation $(P_\eps)$ is the
usual Sobolev space $H^1(\R^N)=\{u\in L^2(\R^N):\|\nabla
u\|_{L^2}<\infty\}$, while for $p\ge p^\ast$ the limit equation
$(P_0)$ is variationally well-posed in the homogeneous Sobolev space
$D^1(\R^N)$, defined as the completion of $C^\infty_0(\R^N)$ with
respect to the Dirichlet norm $\|\nabla u\|_{L^2}$.  Clearly,
$H^1(\R^N)\subsetneq D^1(\R^N)$ and as a consequence, no natural
perturbation setting (in the spirit of the implicit function theorem)
is available to analyze the family of equations $(P_\eps)$ as $\eps\to
0$.  In fact, a linearization of $(P_0)$ around the ground state
solution is not a Fredholm operator and has zero as the bottom of the
essential spectrum in $L^2(\R^N)$.  As a consequence, advanced
Lyapunov--Schmidt type reduction methods of Ambrosetti and Malchiodi
\cite{Ambrosetti} are not applicable to the family of equations
$(P_\eps)$.

If we introduce the {\em canonical} rescaling associated with the
lowest order nonlinear term in $(P_\eps)$: \beq\label{R-sub}
v(x)=\eps^{-\frac{1}{p-2}}\,u\Big(\frac{x}{\sqrt{\eps}}\Big), \eeq
then $(P_\eps)$ transforms into the equation
$$
-\Delta v + v
=|v|^{p-2}v-\eps^{\frac{q-p}{p-2}}\,|v|^{q-2}v\quad\text{in
  $\R^N$.}\leqno{(R_\eps)}
$$
The limit problem associated to $(R_\eps)$ as $\eps\to 0$ has the form
$$
-\Delta v + v =|v|^{p-2}v\quad\text{in $\R^N$.}\leqno{(R_0)}
$$
It is well-known that:
\begin{itemize}
\item for $p\ge p^\ast$ equation $(R_0)$ has no finite energy
  solutions, which is a direct consequence of Pokhozhaev's identity
  \cite{Pokhozhaev,BL-I});
\item for $2<p<p^\ast$ equation $(R_0)$ admits a unique radial ground
  state solution.  The existence goes back at least to \cite{Strauss},
  the uniqueness was proved in \cite{Kwong}.
\end{itemize}
The advantage of the rescaling \eqref{R-sub} is that at least in the
range $2<p\le p^\ast$ both $(R_\eps)$ and the limit problem $(R_0)$
are variationally well--posed in the same Sobolev space
$H^1(\R^N)$. Then the rescaled problem $(R_\eps)$ could be naturally
seen as a small perturbation of the limit problem $(R_0)$ and the
family of ground states $(v_\eps)$ of problem $(R_\eps)$ could be
rigorously interpreted as a perturbation of the ground state solution
of the limit problem $(R_0)$.  This could be done e.g. by using a
combination of the variational and Lyapunov--Schmidt perturbation
techniques as developed by Ambrosetti, Malchiodi et al., see
\cite{Ambrosetti} and further references therein.

The distinction between the asymptotic behaviors of the solutions of
problem $(P_\eps)$ as $\eps \to 0$ depending on the value of $p$ as
compared to $p^\ast$ was first pointed out in \cite{Muratov}. There it
was also observed that the asymptotic behavior of the ground states
$u_\eps$ for $p=p^\ast$ is not controlled by the solution set
structure of either $(P_0)$ or $(R_0)$.  Formal asymptotic analysis of
\cite{Muratov} explains that, in fact, three different asymptotic
regimes have to be distinguished in $(P_\eps)$: the {\em subcritical
  case} $2<p<p^\ast$, the {\em supercritical case} $p>p^\ast$ and the
most delicate {\em critical case} $p=p^\ast$.

It this work, using an adaptation of the constrained minimization
techniques developed by H. Berestycki and P.-L. Lions in \cite{BL-I},
combined with the Pokhozhaev identities associated with $(P_\eps)$ and
relevant limit problems, we provide a complete analysis of these three
asymptotic regimes.  The analysis confirms and extends the ideas
introduced in \cite{Muratov} and gives a full characterization of the
asymptotic behavior of ground state solutions of $(P_\eps)$ for
$\eps\to 0$.  \smallskip

\subsection*{Notations.}

For $\eps \ll 1$ and $f(\eps), g(\eps) \geq 0$, we write $f(\eps)
\lesssim g(\eps)$, $f(\eps) \sim g(\eps)$ and $f(\eps) \simeq
g(\eps)$, implying that there exists $\eps_0 > 0$ such that for every
$0 < \eps \leq \eps_0$: \smallskip

$f(\eps)\lesssim g(\eps)$ if there exists $C>0$ independent of $\eps$
such that $f(\eps) \le C g(\eps)$;

$f(\eps)\sim g(\eps)$ if $f(\eps)\lesssim g(\eps)$ and
$g(\eps)\lesssim f(\eps)$;

$f(\eps)\simeq g(\eps)$ if $f(\eps) \sim g(\eps)$ and $\lim_{\eps\to
  0}\frac{f(\eps)}{g(\eps)}=1$.

\noindent
We also use the standard notations $f = O(g)$ and $f = o(g)$, bearing
in mind that $f \geq 0$ and $g \geq 0$.
As usual, $C,c,c_1$, etc., denote generic positive constants
independent of $\eps$.

\section{Main results.}

\subsection{Subcritical case $2<p<p^\ast$.}

Since in the subcritical case the limit equation $(P_0)$ has no ground
state solutions, in view of \eqref{bound} the family of ground states
$u_\eps$ must converge to zero, locally over compact subsets of
$\R^N$.  To describe the asymptotic behavior of $u_\eps$ we use the
rescaling \eqref{R-sub} which transforms $(P_\eps)$ into equation
$(R_\eps)$.  For $2<p<p^\ast$, let $v_0(x)$ denote the unique radial
ground state solution of the limit equation $(R_0)$. It is well--known
that $v_0\in C^\infty(\R^N)$, $v_0(x)$ is a monotone decreasing
function of $|x|$ and that
\beq\lim_{|x|\to\infty}|x|^{\frac{N-1}{2}}e^{|x|}u_\eps(x)=C_0>0,\eeq
cf. \cite{BL-I}.  The advantage of the rescaling \eqref{R-sub} is that
both $(R_\eps)$ and the limit problems $(R_0)$ are variationally
well--posed in the Sobolev space $H^1(\R^N)$.  Note however that
$(R_0)$ is translationally invariant and hence the radial ground state
$v_0(x)$ is not an isolated solution.  As a consequence, an Implicit
Function Theorem argument is not directly applicable to $(R_\eps)$.
Nevertheless, it is known that the linearization operator
$-\Delta+1-(p-1)v_0^{p-2}$ of $(R_0)$ around the ground state $v_0$ is
a Fredholm operator in $H^1(\R^N)$, see \cite[Lemma 4.1]{Ambrosetti}.
Then perturbation techniques in \cite{Ambrosetti} could be easily
adapted in order to show that for all sufficiently small $\eps>0$
equation $(R_\eps)$ admits a radial ground state $v_\eps(x)$ which
converges to $v_0(x)$ as $\eps\to 0$.  Rescaling back to the original
variable and taking into account the uniqueness of the radial ground
state of $(P_\eps)$ we arrive at the following (folklore) result.

\begin{theorem}\label{th-sub}
Let $2<p<p^\ast$.
As $\eps\to 0$, the rescaled family of ground states
\beq v_\eps(x):=\eps^{-\frac{1}{p-2}}u_\eps\Big(\frac{x}{\sqrt{\eps}}\Big)\eeq
converges to $v_0(x)$ in $H^1(\R^N)$, $L^q(\R^N)$ and
$C^2(\R^N)$. In particular,
\beq u_\eps(0)\simeq \eps^{\frac{1}{p-2}}  v_0(0).\eeq
\end{theorem}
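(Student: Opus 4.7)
The plan is to apply the Implicit Function Theorem to the rescaled equation $(R_\eps)$ at the point $(v_0, 0)$, working in the radial subspace of $H^1(\R^N)$. After the rescaling \eqref{R-sub}, the map
\[
F(v,\eps) \,:=\, -\Delta v + v - |v|^{p-2}v + \eps^{\frac{q-p}{p-2}}\,|v|^{q-2}v
\]
defines a $C^1$ mapping from $\bigl(H^1_{\mathrm{rad}}(\R^N)\cap L^q(\R^N)\bigr)\times[0,\eps_\ast)$ into $H^{-1}(\R^N)+L^{q'}(\R^N)$, with $F(v_0,0)=0$. I would verify that the linearization $D_v F(v_0, 0) = -\Delta+1-(p-1)v_0^{p-2}$ is Fredholm (it is a compact perturbation of $-\Delta+1$, thanks to the exponential decay of $v_0$), and that its full kernel $\mathrm{span}\{\partial_{x_i} v_0\}_{i=1}^N$, which arises from translation invariance of $(R_0)$, is purely non-radial; hence on the radial subspace the linearization is an isomorphism. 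This is precisely \cite[Lemma 4.1]{Ambrosetti}.

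The IFT then yields a $C^1$ branch $\tilde v_\eps \to v_0$ in $H^1$ of radial solutions of $(R_\eps)$. Elliptic bootstrapping together with the positivity of $v_0$ forces $\tilde v_\eps > 0$ for all sufficiently small $\eps$. Rescaling back produces a positive radial solution of $(P_\eps)$, which must coincide with $u_\eps$ by the uniqueness assertion in Theorem~A; thus $\tilde v_\eps = v_\eps$.

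To upgrade $H^1$ convergence to $L^q$ and $C^2$: the case $q\le p^\ast$ is handled directly by Sobolev embedding, while for $q > p^\ast$ I would combine $L^{p^\ast}$ convergence with a uniform $L^\infty$ bound (coming from \eqref{bound} after rescaling, via De~Giorgi--Nash--Moser estimates for $(R_\eps)$) through the interpolation inequality
\[
\|v_\eps - v_0\|_{L^q} \,\le\, \|v_\eps - v_0\|_{L^{p^\ast}}^{p^\ast/q}\,\|v_\eps - v_0\|_{L^\infty}^{1-p^\ast/q}.
\]
$C^2$ convergence then follows from local elliptic regularity applied to the difference equation, together with the exponential decay estimate \eqref{E-exp} transferred from $(P_\eps)$ to $(R_\eps)$, which forces uniform decay at infinity and upgrades local $C^2$ convergence to global $C^2$ convergence.

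The main obstacle is the case $q > p^\ast$: there $H^1\not\hookrightarrow L^q$, so one must carefully set up the IFT in the smaller space $H^1\cap L^q$ with the correct dual on the right-hand side, and confirm the linearization remains an isomorphism at that level. A more self-contained alternative, bypassing this technicality, would be purely variational: derive uniform bounds on $v_\eps$ in $H^1\cap L^q$ from the energy and Pokhozhaev identities for $(R_\eps)$, pass to a weakly convergent subsequence in $H^1_{\mathrm{rad}}$, use Strauss' radial compactness to get strong convergence in $L^r$ for $2 < r < p^\ast$, identify the limit as $v_0$ via the uniqueness \cite{Kwong} of the positive radial solution of $(R_0)$, and finally recover strong convergence in $H^1\cap L^q$ by subtracting the equations.
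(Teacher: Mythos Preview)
Your primary proposal (IFT on the radial subspace) is precisely the route the paper \emph{mentions} in the introduction as one way to obtain Theorem~\ref{th-sub} --- it calls the result ``folklore'' and points to \cite{Ambrosetti} --- but it is not the proof the paper actually supplies. The paper's proof, in Section~\ref{S-sub}, is purely variational and is much closer to the alternative you sketch in your final paragraph: it sets up the Berestycki--Lions constrained minimization $(S_\eps')$ for $(R_\eps)$, shows $S_\eps'\to S_0'$ by testing with the limit minimizer $w_0$, uses the Nehari and Pokhozhaev identities to compute $\lim\|w_\eps\|_2$ and $\lim\|w_\eps\|_p$ exactly, deduces that (a slight dilation of) $w_\eps$ is a minimizing sequence for $(S_0')$, and then appeals to the compactness built into \cite[Theorem~2]{BL-I} together with Kwong's uniqueness to get strong $H^1$ convergence; $L^\infty$, $L^q$ and $C^2$ convergence then follow as in Lemma~\ref{l-w} and Corollary~\ref{cor-reg}. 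The IFT route is cleaner and yields a $C^1$ branch, but needs the non-degeneracy of $v_0$ as an external input and, as you correctly flag, care with the function-space setup when $q>p^\ast$; the variational route avoids non-degeneracy entirely and parallels the arguments used in the critical and supercritical sections of the paper.

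Two small repairs to your IFT sketch. First, $\eps\mapsto\eps^{(q-p)/(p-2)}$ is not $C^1$ at $\eps=0$ when $q<2p-2$, so reparametrize by $\mu=\eps^{(q-p)/(p-2)}$ before applying the IFT. Second, \eqref{bound} does \emph{not} give a uniform $L^\infty$ bound on $v_\eps$ after rescaling --- it only yields $v_\eps(0)\le\eps^{-1/(p-2)}\to\infty$ --- so the uniform $L^\infty$ bound must come from the $H^1$ bound alone, via Moser iteration or the elementary argument in Lemma~\ref{l-w} with $\|v_\eps\|_{p^\ast}$ controlling the potential.
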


In the last section of this work we provide a short alternative proof
of this result based only upon variational methods which are developed
in the main part of this paper and without explicit references to
perturbation techniques.

\begin{remark}
  For $p\ge p^\ast$ Pokhozhaev's identity implies that $(R_0)$ has no
  nontrivial solutions in $H^1(\R^N)$.  In fact, it is known that
  $v_0(0)\to\infty$ as $p\uparrow p^\ast$.  Note that a complete
  asymptotic characterization of the ground states of equations
  $(R_0)$ as $p\uparrow p^\ast$ (and more general $m$--Laplace
  equations of type $(R_0)$) was given in
  \cite{Gazzola,Gazzola-II,Gazzola-III}.  More specifically (see
  \cite[Corollary 1]{Gazzola-II}), if $\delta:=p^\ast-p$, then for
  $\delta\downarrow 0$ it holds \beq\label{serrin} v_0(0)\simeq
  \beta_N\left\{
\begin{array}{ll}
\delta^{-\frac{N-2}{4}}, & N\ge 5,\medskip\\
\delta^{-\frac{1}{2}}|\log\delta|,& N=4,\medskip\\
\delta^{-\frac{1}{2}},& N=3,\\
\end{array}
\right.  \eeq for some explicit constants $\beta_N>0$.  This suggests
that for $p=p^\ast$ rescaling \eqref{R-sub} fails to capture the
behavior of the ground states $u_\eps$ and a different approach is
needed to handle the critical and supercritical case.  Note also that
the asymptotic behavior of ground states of ``slightly'' subcritical
elliptic problems in the context of bounded domains was studied in
\cite{Atkinson,Brezis-eps,Han,Rey}.
\end{remark}

\subsection{Supercritical case $p>p^\ast$.}
In contrast to the subcritical case, for $p>p^\ast$ the limit equation
$(P_0)$ admits a unique radial ground state solution $u_0(x)>0$.  It
is known that $u_0\in C^2(\R^N)$, $u_0(x)$ is a monotone decreasing
function of $|x|$ and that
\beq\lim_{|x|\to\infty}|x|^{N-2}u_0(x)=C_0>0,\eeq
see \cite[Theorem 4]{BL-I} or \cite{Merle-I,Merle-II} for the
existence, and \cite{Merle-II,Kwong-plus} for the uniqueness
proofs. However, as was already mentioned, the linearization operator
$-\Delta-(p-1)u_0^{p-2}$ of $(P_0)$ around the ground state $u_0$ is
not Fredholm and has zero as the bottom of the essential spectrum in
$L^2(\R^N)$. As a consequence, standard perturbation methods are not
applicable to $(P_0)$.  Using a direct analysis of the family of
constrained minimizations problem associated to $(P_\eps)$, we prove
the following.

\begin{theorem}\label{th-super}
  Let $p>p^\ast$.  As $\eps\to 0$, the family of ground states
  $u_\eps$ converges to $u_0$ in $D^1(\R^N)$, $L^q(\R^N)$ and
  $C^2(\R^N)$.  In particular, \beq u_\eps(0)\simeq u_0(0).\eeq In
  addition, $\eps\|u_\eps\|_2^2\to 0$.
\end{theorem}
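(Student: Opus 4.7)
The starting point is to combine the equation $(P_\eps)$ tested against $u_\eps$ with the Pohozaev identity, which read respectively
\[
\|\nabla u_\eps\|_2^2 + \eps\|u_\eps\|_2^2 = \|u_\eps\|_p^p - \|u_\eps\|_q^q,
\]
\[
\tfrac{N-2}{2}\|\nabla u_\eps\|_2^2 + \tfrac{N\eps}{2}\|u_\eps\|_2^2 = \tfrac{N}{p}\|u_\eps\|_p^p - \tfrac{N}{q}\|u_\eps\|_q^q,
\]
together with the analogous identities for $u_0$ (in which the $\eps$-terms disappear). Linear algebra yields the clean identity $\E_\eps(u_\eps) = N^{-1}\|\nabla u_\eps\|_2^2$ and, similarly, $\E_0(u_0) = N^{-1}\|\nabla u_0\|_2^2$, so the energy and the Dirichlet norm are equivalent along the family. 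The same linear system expresses $\|u_\eps\|_p^p$ and $\|u_\eps\|_q^q$ as positive linear combinations of $\|\nabla u_\eps\|_2^2$ and $\eps\|u_\eps\|_2^2$, so all basic norms are under control once these two quantities are.

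Next I would establish the upper bound $\limsup_{\eps\to 0}\E_\eps(u_\eps)\leq \E_0(u_0)$ by a test-function construction. Since $u_\eps$ is a ground state of $(P_\eps)$, it realizes the infimum of $\E_\eps$ on the Pohozaev manifold $\{\mathcal{P}_\eps(u)=0\}$; any admissible competitor supplies an upper bound. Take $w_R := u_0\, \eta_R$ with a smooth radial cutoff supported in $B_{2R}$: since $u_0\in D^1(\R^N)\cap L^p(\R^N)\cap L^q(\R^N)$, one has $w_R\in H^1(\R^N)$. A Pohozaev dilation $w_R(\cdot/t_\eps)$ with a uniquely chosen $t_\eps$ places this competitor on the manifold, and using the first-step identities one checks $t_\eps\to 1$ as $\eps\to 0$ and $R\to\infty$. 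A direct computation (using $\E_\eps(w_t)=t^{N-2}\|\nabla w\|_2^2/N$ whenever $w_t$ lies on the manifold) gives $\E_\eps(w_R(\cdot/t_\eps))\to \|\nabla u_0\|_2^2/N = \E_0(u_0)$, provided $R=R(\eps)$ grows slowly enough that $\eps\|w_R\|_2^2\to 0$.

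From this bound and the first-step identities, $\|\nabla u_\eps\|_2^2$, $\|u_\eps\|_p^p$, $\|u_\eps\|_q^q$ and $\eps\|u_\eps\|_2^2$ are uniformly bounded. Extract a subsequence $u_\eps\rightharpoonup u^\ast$ in $D^1(\R^N)$; elliptic regularity upgrades this to $u_\eps\to u^\ast$ in $C^2_{\mathrm{loc}}(\R^N)$, so $u^\ast$ is a radial nonincreasing $D^1$-solution of $(P_0)$. The key point is nontriviality of $u^\ast$: the interpolation $\|u\|_p^p\leq \|u\|_\infty^{p-p^\ast}\|u\|_{p^\ast}^{p^\ast}$, combined with Sobolev and \eqref{bound}, gives $\|u_\eps\|_p^p\leq C\|\nabla u_\eps\|_2^{p^\ast}$; paired with Nehari $\|\nabla u_\eps\|_2^2\leq \|u_\eps\|_p^p$ this forces the universal lower bound $\|\nabla u_\eps\|_2^2\geq c>0$. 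If one had $u^\ast\equiv 0$, then $u_\eps(0)\to 0$ by local convergence; the interpolation then sharpens to $\|u_\eps\|_p^p\leq C\, u_\eps(0)^{p-p^\ast}\|\nabla u_\eps\|_2^{p^\ast}\to 0$, and via Nehari $\|\nabla u_\eps\|_2^2\to 0$, contradicting the lower bound. Hence $u^\ast\not\equiv 0$, and the uniqueness result of \cite{Merle-II,Kwong-plus} identifies $u^\ast=u_0$.

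To promote the convergence, Fatou gives $\liminf\|\nabla u_\eps\|_2^2\geq\|\nabla u_0\|_2^2$, matching the upper bound, so $\|\nabla u_\eps\|_2^2\to\|\nabla u_0\|_2^2$ and $u_\eps\to u_0$ strongly in $D^1(\R^N)$. For $L^q$-convergence, the radial-monotone Strauss lemma combined with the uniform $L^p$-bound supplies the envelope $u_\eps(r)\leq C\min(1, r^{-N/p})$, which lies in $L^q(\R^N)$ because $p<q$; dominated convergence then yields $u_\eps\to u_0$ in $L^q(\R^N)$, and in particular $\|u_\eps\|_q^q\to\|u_0\|_q^q$. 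Plugging these limits into the linear system of the first paragraph forces $\eps\|u_\eps\|_2^2\to 0$ and $\|u_\eps\|_p^p\to \|u_0\|_p^p$. The pointwise asymptotic $u_\eps(0)\simeq u_0(0)$ follows from $C^2_{\mathrm{loc}}$-convergence at $x=0$. The main obstacle is the test-function construction of the second paragraph: because $u_0\notin L^2(\R^N)$ when $N\leq 4$ (the decay $u_0\sim|x|^{-(N-2)}$ fails $L^2$-integrability at infinity), the cutoff scale $R(\eps)$ must be tuned to kill simultaneously both the truncation errors in the $D^1$, $L^p$ and $L^q$ norms and the spurious term $\eps\|w_R\|_2^2$, which in the slowest-decay case $N=3$ imposes the balance condition $\eps R(\eps)\to 0$.
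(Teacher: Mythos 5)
Your argument is correct and follows the same overall strategy as the paper (test-function upper bound on the ground-state energy, compactness via radial decay, identification of the limit by uniqueness, then upgrades to $D^1$, $L^q$ and $C^2$), but it differs from the paper in two genuine respects worth recording. First, you work directly with $\E_\eps$ and the Pokhozhaev manifold, whereas the paper works with the Berestycki--Lions constrained minimization $(S_\eps)$ for the rescaled $w_\eps$; these are equivalent under the dilation you allude to, since any $u$ on the manifold rescales to a competitor in $(S_\eps)$ with $\|\nabla u\|_2^2=\|\nabla w\|_2^N$, and your low-dimensional cutoff tuning ($\eps\log R\to 0$ for $N=4$, $\eps R\to 0$ for $N=3$) is precisely that of Lemma~\ref{L-super-2}. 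Second, and more substantively, the paper establishes $\eps\|w_\eps\|_2^2\to 0$ as a preliminary lemma (Lemma~\ref{eps-2-super}) and uses it to pass to the limit in the constraint $\int F_0 =1$, which is its route to nontriviality of the weak limit; you instead obtain nontriviality directly from the chain
$\|\nabla u_\eps\|_2^2\le\|u_\eps\|_p^p\le u_\eps(0)^{p-p^\ast}\|u_\eps\|_{p^\ast}^{p^\ast}\lesssim u_\eps(0)^{p-p^\ast}\|\nabla u_\eps\|_2^{p^\ast}$,
combined with $u_\eps(0)\le 1$, the a priori energy bound, and $p^\ast>2$, and you then recover $\eps\|u_\eps\|_2^2\to 0$ at the very end by feeding the established norm limits into the Nehari/Pokhozhaev system. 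This alternative is self-contained and arguably cleaner, since it does not require the $L^2$ estimate to be established before compactness. Two small items should be spelled out to make the write-up airtight: the passage from $C^2_{\rm loc}$ to global $C^2(\R^N)$ convergence requires combining the uniform radial decay with translation-uniform Schauder estimates, as in the proof of Corollary~\ref{cor-reg}; and the dominating envelope in your $L^q$ step presupposes a uniform $L^p$ bound on $u_\eps$, which follows from the energy upper bound, Sobolev and $u_\eps\le 1$ by the same interpolation, and should be stated before the Strauss lemma is invoked.
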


\begin{remark}
  For $p=p^\ast$ Pokhozhaev's identity implies that $(P_0)$ has no
  nontrivial solutions in $D^1(\R^N)$.  In fact, it is not difficult
  to show that $u_0(0)\to 0$ as $p\downarrow p^\ast$.  Moreover, if
  $\delta:=p-p^\ast$, then for $\delta\downarrow 0$ we prove
  \beq\label{serrin-super} \delta^{\frac{1}{q-p^\ast}} \lesssim u_0(0)
  \lesssim \delta^{\frac{1}{q + N}} , \eeq and, provided that $q>
  {N(N+2) \over 2 (N - 2)}$, \beq\label{serrin-super-upper} u_0(0)
  ~\sim~ \delta^\frac{1}{q-p^\ast}. \eeq See Section \ref{S-bonus} for
  further details and full statements.  Note that related estimates
  for the asymptotics of ground states of $(P_0)$ with fixed
  $q>p>p^\ast$ on a sequence of expanding domains were studied in
  \cite{Merle-I,Merle-II}.
\end{remark}

\subsection{Critical case $p=p^\ast$.}

In the critical case both the unrescaled limit equation $(P_0)$ and
the ``canonically'' rescaled equation $(R_0)$ have no nontrivial
finite energy solutions.  We are going to show that after a suitable
rescaling the correct limit equation for $(P_\eps)$ is in fact given
by the critical Emden--Fowler equation
$$
-\Delta U = U^{p^\ast-1}\quad\text{in $\R^N$.}\leqno{(R_\ast)}
$$
It is well--known that the radial ground states of $(R_\ast)$ are
given by the function \beq\label{U-lambda-1} U_1(x):=
\left(1+\frac{|x|^2}{N(N-2)}\right)^{-\frac{N-2}{2}}, \eeq and the
family of its rescalings \beq\label{U-lambda} U_\lambda(x):=
\lambda^{-\frac{N-2}{2}}U_1\left(\frac{x}{\lambda}\right),\qquad\lambda>0.
\eeq
Our main result in this work is the following.

\begin{theorem}\label{th-main}
  Let $p=p^\ast$. There exists a rescaling
  $\lambda_\eps:(0,\eps_\ast)\to(0,\infty)$ such that as $\eps\to 0$,
  the rescaled family of ground states
\beq v_\eps(x):=\lambda_\eps^{\frac{1}{p-2}}u_\eps\Big(\lambda_\eps x\Big)\eeq
converges to $U_1(x)$ in $D^1(\R^N)$, $L^q(\R^N)$ and
$C^2(\R^N)$.  Moreover, \beq\label{lambda} \lambda_\eps\sim
\left\{
\begin{array}{ll}
  \eps^{-\frac{p-2}{2q-4}}, & N\ge 5,\medskip\\
  \big(\eps\log\frac{1}{\eps}\big)^{-\frac{1}{q-2}},& N=4,\medskip\\
  \eps^{-\frac{1}{q-4}},& N=3.\\
\end{array}
\right.
\eeq
and
\beq\label{lambda-u0}
u_\eps(0)\sim
\left\{
\begin{array}{ll}
  \eps^{\frac{1}{q-2}}, & N\ge 5,\medskip\\
  \big(\eps\log\frac{1}{\eps}\big)^{\frac{1}{q-2}},& N=4,\medskip\\
  \eps^{\frac{1}{2q-8}},& N=3.\\
\end{array}
\right.
\eeq
\end{theorem}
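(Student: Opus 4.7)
The plan is to combine the Pokhozhaev and Nehari identities for $(P_\eps)$ at $p = p^\ast$ (which become especially rigid at the critical exponent), sharp energy estimates via Talenti bubbles, and concentration-compactness after rescaling by an $\eps$-dependent scale. First I would set $A_\eps := \|\nabla u_\eps\|_2^2$, $B_\eps := \eps \|u_\eps\|_2^2$, $C_\eps := \|u_\eps\|_{p^\ast}^{p^\ast}$, $D_\eps := \|u_\eps\|_q^q$ and derive the Nehari identity $A_\eps + B_\eps + D_\eps = C_\eps$ together with the Pokhozhaev identity; at $p = p^\ast$ these combine into the two clean relations
\[
B_\eps \;=\; \frac{(N-2)q - 2N}{2q}\, D_\eps, \qquad \E_\eps(u_\eps) \;=\; \frac{1}{N}\, A_\eps,
\]
which make $\eps\|u_\eps\|_2^2$ and $\|u_\eps\|_q^q$ comparable and reduce control of the ground state energy to control of $\|\nabla u_\eps\|_2^2$ alone.

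For the upper bound I would use a test-function argument based on the variational characterization of $u_\eps$: $\E_\eps(u_\eps) \leq \max_{t \geq 0} \E_\eps(t\psi_\lambda)$ for any admissible profile $\psi_\lambda$. Taking $\psi_\lambda = U_\lambda$ for $N \geq 5$ and $\psi_\lambda = \eta\, U_\lambda$ (smooth cutoff) for $N = 3, 4$, and evaluating at the near-critical $t$, this yields
\[
\max_{t \geq 0} \E_\eps(t\psi_\lambda) \;=\; \frac{1}{N}\, S^{N/2} + \frac{\eps}{2}\|\psi_\lambda\|_2^2 + \frac{1}{q}\|\psi_\lambda\|_q^q + (\text{lower order}),
\]
where $S$ is the Sobolev constant. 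Optimizing in $\lambda$ balances $\eps\|\psi_\lambda\|_2^2 \sim \|\psi_\lambda\|_q^q$ (exactly consistent with $B_\eps \sim D_\eps$) and produces $\lambda_\eps$ as in \eqref{lambda}; the dimensional splitting stems from $\int |U_1|^2\,dx$ being finite for $N \geq 5$, logarithmically divergent for $N = 4$, and linearly divergent for $N = 3$. The matching lower bound $\E_\eps(u_\eps) \geq \frac{1}{N} S^{N/2}$ follows from Sobolev's inequality $A_\eps \geq S\, C_\eps^{2/p^\ast}$ together with $C_\eps \geq A_\eps$, forcing $A_\eps \to S^{N/2}$ and $B_\eps, D_\eps \to 0$.

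To pass to the limit I would define $\lambda_\eps$ by a normalization such as $v_\eps(0) = 1$ and verify that it agrees with \eqref{lambda} by re-expressing the identity $B_\eps = \frac{(N-2)q-2N}{2q} D_\eps$ in the rescaled variables, using the local behavior of $U_1$ near the origin and the exponential decay of $v_\eps$ at infinity to tame the divergent $\|U_1\|_2^2$ in dimensions $3, 4$. The rescaled $v_\eps$ is radial and monotone with $\|\nabla v_\eps\|_2^2 \to S^{N/2}$ and satisfies a perturbation of $(R_\ast)$ in which both rescaled coupling constants tend to zero. Weak $D^1$ compactness, the compact embedding of radial $D^1$ functions into $L^{p^\ast}_{\mathrm{loc}}$, and uniqueness of the ground state of $(R_\ast)$ normalized at the origin then force $v_\eps \to U_1$; strong $D^1$ convergence follows from $\|\nabla v_\eps\|_2^2 \to \|\nabla U_1\|_2^2$, and $L^q$ and $C^2_{\mathrm{loc}}$ convergence from elliptic regularity. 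The estimate \eqref{lambda-u0} for $u_\eps(0)$ is then read off by undoing the rescaling.

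The main obstacle is the low-dimensional bookkeeping for $N = 3, 4$, where $U_1 \notin L^2$. The Brezis--Nirenberg-type expansion of $\|\psi_\lambda\|_2^2$ must be carried out with explicit cutoff errors to produce the logarithmic factor for $N = 4$ and the modified exponent for $N = 3$; in the rescaled limit one has to control the interplay between the divergence of $\|v_\eps\|_2^2$ and the small factor $\eps \lambda_\eps^2$ in the rescaled equation, which alone regularizes $v_\eps$ at spatial infinity. Matching the upper and lower energy bounds to the sharp sub-leading order, rather than merely showing $\E_\eps(u_\eps) \to \frac{1}{N} S^{N/2}$, is what actually pins down $\lambda_\eps$ and $u_\eps(0)$ and makes the critical case substantially more delicate than either the subcritical or the supercritical one.
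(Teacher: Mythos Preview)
Your outline is essentially the paper's proof: Pokhozhaev--Nehari at $p=p^\ast$ giving $\eps\|u_\eps\|_2^2\sim\|u_\eps\|_q^q$, a Talenti-bubble test-function upper bound (with cutoffs for $N=3,4$) optimized in $\lambda$, the Sobolev lower bound forcing $\|\nabla u_\eps\|_2^2\to S^{N/2}$, and concentration--compactness after rescaling to obtain $v_\eps\to U_1$. The only places where the paper is more explicit than your sketch are (i) the choice of $\lambda_\eps$, which the paper fixes via a Lions-type concentration function rather than $v_\eps(0)=1$ so that concentration--compactness applies without any a priori information on the scale, and (ii) the low-dimensional step: to extract the matching upper bound on $\lambda_\eps$ for $N=3,4$ the paper proves a pointwise \emph{lower} barrier $v_\eps(x)\gtrsim |x|^{-(N-2)}e^{-\sqrt{\eps S_\eps}\,\lambda_\eps|x|}$ by comparison, which is what turns your remark about ``exponential decay regularizing $v_\eps$ at infinity'' into the quantitative lower bound on $\|v_\eps\|_2^2$ needed to close the argument.
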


\begin{remark}
Asymptotics \eqref{lambda} and \eqref{lambda-u0} were first derived
in \cite{Muratov} using methods of formal asymptotic expansions.
Theorem \ref{th-main}, in particular, justifies the values of precise
asymptotic constants found in \cite{Muratov}.
\end{remark}

\subsection{Outline.}

The rest of the paper is organized as follows. In Section \ref{S-var}
we introduce a variational characterization of the ground states
$u_\eps$ of the problem $(P_\eps)$ as well as some other preliminary
results. In Section \ref{S-main} we study the critical case $p=p^\ast$
and prove Theorem \ref{th-main}.  In Section \ref{S-super} we consider
the supercritical case $p>p^\ast$ and prove Theorem \ref{th-super}.
Finally, in Section \ref{S-sub} we will revisit the subcritical case
$2<p<p^\ast$ and sketch a simple variational proof of Theorem
\ref{th-sub}, in the spirit of our previous arguments.

\section{Variational characterization of the ground states.}\label{S-var}

The existence and properties of the ground state $u_\eps$ of equation
$(P_\eps)$, as summarized in Theorem A, could be established in
several different ways, e.g. by means of ODE techniques.  Here we
shall utilize a variational characterization of the ground states
$u_\eps$ developed by Berestycki and Lions in \cite{BL-I}.

Given $q>p>2$ and $\eps\ge 0$ set \beq\label{f-eps} f_\eps(u):=\left\{
\begin{array}{cl}
0, & u<0,\\
u^{p-1}-u^{q-1}-\eps u, & u\in[0,1],\\
-\eps, & u>1,\\
\end{array}
\right.  \qquad F_\eps(u):=\int_0^u f_\eps(s)ds.  \eeq In view of
\eqref{bound} and since we are interested only in positive solutions
of $(P_\eps)$, the nonlinearity in $(P_\eps)$ may be always replaced
by its bounded truncation $f_\eps(u)$ from \eqref{f-eps}.

For $\eps>0$, consider the constrained minimization problem
$$
S_\eps:=\inf\left\{\int_{\R^N}|\nabla w|^2\,dx\Big|\; w\in
  H^1(\R^N),\;p^\ast\!\int_{\R^N}F_\eps(w)\,dx\;=1\right\}.
\leqno{(S_\eps)}
$$
As was proved in \cite[Theorem 2]{BL-I}, {there exists $\eps_\ast > 0$
  depending only on $p$ and $q$ such that for all
  $\eps\in(0,\eps_\ast)$ minimization problem $(S_\eps)$ admits a
  positive radially symmetric minimizer $w_\eps(x)$.  Further, there
  exists a Lagrange multiplier $\theta_\eps>0$ such that
  \beq\label{Euler-var} -\Delta w_\eps=\theta_\eps
  f_\eps(w_\eps)\quad\text{in $\R^N$}.  \eeq In particular, the
  minimizer $w_\eps$ satisfies Nehari's identity
  \beq\label{Nehari-var} \int_{\R^N}|\nabla w_\eps|^2\,dx=\theta_\eps
  \int f_\eps(w_\eps)w_\eps\,dx, \eeq and Pokhozhaev's identity (see
  e.g. \cite[Proposition 1]{BL-I}) \beq\label{Pokhozhaev-var}
  \int_{\R^N}|\nabla w_\eps|^2\,dx=\theta_\eps p^\ast\int
  F_\eps(w_\eps)\,dx.  \eeq The latter immediately implies that
  \beq\label{theta} \theta_\eps=S_\eps.  \eeq Then a direct
  calculation involving \eqref{theta} shows that the rescaled function
  \beq\label{var-rescale}
  u_\eps(x):=w_\eps\Big(\frac{x}{\sqrt{S_\eps}}\Big) \eeq is the
  radial ground state of $(P_\eps)$, described in Theorem A.  Another
  simple consequence of \eqref{Pokhozhaev-var} is that $(P_\eps)$ has no
  nontrivial finite energy solutions for $\eps\ge \eps_{\ast}$.

  Equivalently to $(S_\eps)$, we may seek to minimize the quotient
  \beq\S_\eps(w):=\frac{\|\nabla w\|_2^2} {\Big(p^\ast\int_{\R^N}
    F_\eps(w)\,dx\Big)^\frac{N-2}{N}},\qquad w\in\mathcal M_\eps,\eeq
  where \beq\mathcal M_\eps:=\Big\{0\le u\in D^1(\R^N),\;
  \int_{\R^N}F_\eps(w)\,dx>0 \Big\}.\eeq Clearly, if we set
  $w_\lambda(x):=w(\lambda x)$ then $\S_\eps(w_\lambda)=\S_\eps(w)$
  for all $\lambda>0$, that is $\S_\eps$ is invariant with respect to
  dilations.  This implies that \beq\label{scale-inv}
  S_\eps=\inf_{w\in{\mathcal M_\eps}}\S_\eps(w).  \eeq In addition,
  since clearly ${\mathcal M_{\eps_2}}\subset{\mathcal
    M_{\eps_1}}$ for $\eps_2>\eps_1>0$, \eqref{scale-inv} shows that
  {\em $S_\eps$ is a monotone nondecreasing function of
    $\eps\in(0,\eps_\ast)$.}

  One of the consequences of Pokhozhaev's identity \eqref{Pokhozhaev-var}
  is an expression for the total energy of the solution
  \beq\label{energy-D}
  \E_\eps(u_\eps)=\Big(\frac{1}{2}-\frac{1}{p^\ast}\Big)S_\eps^\frac{N}{2},
  \eeq see \cite[Corollary 2]{BL-I}, which shows that $u_\eps$ is
  indeed a ground state, i.e.  a nontrivial solution with the least
  energy.

  We will be frequently using the following well known decay and
  compactness properties of radial functions on $\R^N$.

\begin{lemma}
  \label{l:radial}
  \cite[Lemma A.IV, Theorem A.I$^\prime$]{BL-I}.
  \begin{enumerate}
  \item Let $s \geq 1$ and let $u \in L^s(\mathbb R^N)$ be a radial
    non-increasing function. Then for every $x \not= 0$ it holds
    \beq\label{P-2} u(x)\le C_{s,N} |x|^{-\frac{N}{s}}\|u\|_s, \eeq
    where $C_{s,N}=|B_1(0)|^{-{1 \over s}}$.
  \item Let $u_n \in H^1(\mathbb R^N)$ be a sequence of radial
    non-decreasing functions such that $u_n \rightharpoonup u$ in
    $H^1(\mathbb R^N)$. Then upon extraction of a subsequence
    \begin{align}
      \label{eq:H1rad}
      u_n \to  u ~ \text{in} ~ L^\infty(\mathbb R^N
      \backslash B_r(0)) ~\text{and}~L^s(\mathbb R^N \backslash
      B_r(0) ) \quad \forall r > 0 ~\text{and}~ \forall s > p^*.
    \end{align}
  \end{enumerate}
\end{lemma}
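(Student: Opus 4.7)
For part (1), the plan exploits the monotone radial structure directly: for any $x\ne 0$ and any $y\in B_{|x|}(0)$ we have $u(y)\ge u(x)$ by monotonicity, so
\[
\|u\|_s^s \;\ge\; \int_{B_{|x|}(0)} u(y)^s\,dy \;\ge\; u(x)^s\,|B_{|x|}(0)| \;=\; u(x)^s\,|B_1(0)|\,|x|^N,
\]
which rearranges at once to the stated bound with $C_{s,N}=|B_1(0)|^{-1/s}$.

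For part (2), the plan proceeds in three steps. First, applying part (1) with $s=2$ to the $H^1$-bounded sequence yields the uniform pointwise decay estimate $0\le u_n(x)\le C|x|^{-N/2}$ for $x\ne 0$, with $C$ independent of $n$. Second, weak $H^1$-convergence, Rellich--Kondrachov compactness on balls, and a diagonal extraction produce a subsequence converging a.e.\ to $u$; the limit $u$ inherits both the radial non-increasing structure and the same pointwise decay bound. Third, since $N\ge 3$, writing the $H^1$-norm in radial coordinates and applying Cauchy--Schwarz to the identity $\tilde u(r_2)-\tilde u(r_1)=\int_{r_1}^{r_2}\tilde u'(\rho)\,d\rho$ shows that the radial profile $\tilde u$ of $u$ is continuous on $(0,\infty)$, because the weight $\rho^{-(N-1)}$ is integrable on every compact subinterval of $(0,\infty)$.

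With these ingredients in place, the classical fact that a sequence of monotone functions converging pointwise to a continuous monotone limit does so uniformly on each compact subinterval upgrades the a.e.\ convergence of the radial profiles $\tilde u_n$ to uniform convergence on every $[r,R]\subset(0,\infty)$. The uniform tail bound $\tilde u_n(\rho),\tilde u(\rho)\le C\rho^{-N/2}$ then extends this to $[r,\infty)$, giving the $L^\infty(\R^N\setminus B_r(0))$-convergence. For the $L^s$-statement, the same pointwise bound supplies a dominating function $C|x|^{-N/2}$, which belongs to $L^s(\{|x|>r\})$ as soon as $s>2$ (in particular for every $s>p^\ast$), so Lebesgue dominated convergence applied to $|u_n-u|^s$ completes the argument.

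The main delicacy is the promotion of pointwise a.e.\ convergence to uniform convergence on the half-lines $[r,\infty)$: both the monotonicity of each $u_n$ and the continuity of the radial profile of $u$ away from the origin enter in an essential way, and it is the interplay of part (1)'s decay estimate with the $H^1$-bound that makes this step possible.
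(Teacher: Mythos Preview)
The paper does not give its own proof of this lemma; it is simply quoted from Berestycki--Lions \cite[Lemma A.IV, Theorem A.I$'$]{BL-I}, so there is nothing to compare against on the paper's side. Your argument is a correct, self-contained proof (reading the evident typo ``non-decreasing'' in part (2) as ``non-increasing''): the one-line computation for part (1) is the standard one, and for part (2) your chain --- uniform Strauss-type decay from part (1), a.e.\ convergence via Rellich on balls plus diagonalisation, continuity of the limiting radial profile from the one-dimensional $H^1$ estimate, the P\'olya-type fact that monotone functions converging pointwise to a continuous limit do so locally uniformly, and finally dominated convergence for the $L^s$ claim --- is precisely the reasoning behind the cited Berestycki--Lions results. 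Your remark that the $L^s(\R^N\setminus B_r)$ convergence in fact holds for every $s>2$, not only $s>p^\ast$, is correct and matches the sharper range in \cite{BL-I}.
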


\section{Critical case $p=p^\ast$.}\label{S-main}

Throughout this section we always assume that $p=p^\ast$.  In this
critical case Pokhozhaev's identity implies that both the limit equation
$(P_0)$ and the canonically rescaled limit equation $(R_0)$ have no
positive finite energy solutions. We are going to show that after a
suitably chosen rescaling, the limit equation for $(P_\eps)$ is in
fact given by the critical Emden--Fowler equation.

\subsection{Critical Emden--Fowler equation.}
Let
$$
S_\ast:=\inf\left\{\int_{\R^N}|\nabla w|^2\,dx\;\Big|\;w\in
  D^1(\R^N),\;\int_{\R^N} |w|^p\,dx=1\right\}\leqno{(S_\ast)}
$$
be the optimal constant in the Sobolev inequality
\beq\int_{\R^N}|\nabla w|^2\,dx\ge S_\ast \left(\int_{\R^N}
  |w|^p\,dx\right)^{2/p}, \qquad \forall w\in D^1(\R^N).\eeq It is
easy to see that $S_\ast$ is achieved by translations of the rescaled
family \beq\label{U-tilde}
W_\lambda(x):=U_\lambda\left(\sqrt{S_\ast}x\right), \eeq where
$U_\lambda(x)$ are the ground states of the critical Emden--Fowler
equation $(R_\ast)$, explicitly defined by \eqref{U-lambda-1}.
Clearly, \beq\label{W-ast} \|{W_\lambda}\|_p=1,\qquad
\|\nabla{W_\lambda}\|_2^2=S_\ast.  \eeq A straightforward computation
leads to the explicit expression \beq\label{U-ast} \|\nabla
U_\lambda\|_2^2=\|U_\lambda\|_{p}^p=S_\ast^\frac{N}{2}.  \eeq Note
that the family of minimizers $W_\lambda$ solves the Euler--Lagrange
equation \beq -\Delta W=S_\ast W^{p-1}\quad\text{in $\R^N$}.  \eeq

\subsection{Variational estimates of $S_\eps$.}

For our purposes it is convenient to consider the dilation invariant
Sobolev quotient \beq\S_\ast(w):=\frac{\int_{\R^N}|\nabla w|^2\,dx}
{\Big(\int_{\R^N}|w|^p\,dx\Big)^\frac{N-2}{N}},\qquad w\in
D^1(\R^N),\quad w\neq 0,\eeq so that \beq S_\ast=\inf_{0\neq w\in
  D^1(\R^N)}\S_\ast(w).\eeq Denote \beq\sigma_\eps:=S_\eps-S_\ast.\eeq
In order to control $\sigma_\eps$ in terms of $\eps$, we shall use
Sobolev's minimizers $W_\lambda$ as a family of test functions for
$\S_\eps$.  Note that since $W_\lambda\in L^2(\R^N)$ only if $N\ge 5$,
we shall consider the higher and lower dimensions separately.
Straightforward calculations show that ${W_\lambda}\in L^s(\R^N)$ for
all $s>\frac{N}{N-2}$, with \beq\label{W-q}
\|{W_\lambda}\|_s^s=\lambda^{N-\frac{2s}{p-2}}\|W_1\|_s^s=\lambda^{-\frac{N-2}{2}(s-p)}\|W_1\|_{s}^s.
\eeq In particular, if $N\ge 5$ then ${W_\lambda}\in L^2(\R^N)$ and
\beq\label{W-2} \|W_\lambda\|_2^2=\lambda^2\|W_1\|_{2}^2.  \eeq To
consider dimensions $N=3,\,4$, given $R\gg \lambda$, we introduce a
cut off function $\eta_R\in C^\infty_c(\R)$ such that $\eta_R(r)=1$
for $|r|<R$, $0<\eta(r)<1$ for $R<|r|<2R$, $\eta_R(r)=0$ for $|r|>2R$
and $|\eta^\prime(r)|\le 2/R$.  We then compute as in, e.g.,
\cite[Chapter III, proof of Theorem 2.1]{Struwe}\footnote{Note that if
  $0<U\in H^1_{loc}(\R^N)$ solves \beq -\Delta U=kU^{p-1},\qquad
  x\in\R^N,\eeq for some $k\neq 0$, then \beq \int|\nabla(\eta U)|^2
  dx=k\int \eta^2|U|^p dx+\int|\nabla\eta|^2 U^2 dx\quad\text{for all
  } \eta\in C^\infty_c(\R^N).
  \eeq See also \cite[Chapter III, proof of Theorem 2.1]{Struwe}.}
\beq\label{UR-D2} \int |\nabla(\eta_R
{W_\lambda})|^2=S_\ast+O\Big(\Big(\frac{R}{\lambda}\Big)^{-(N-2)}\Big),
\eeq \beq\label{UR-p} \int
|\eta_R{W_\lambda}|^p=1-O\Big(\Big(\frac{R}{\lambda}\Big)^{-N}\Big),
\eeq \beq\label{UR-q} \int
|\eta_R{W_\lambda}|^q=\lambda^{-\frac{N-2}{2}(q-p)}\|W_1\|_{q}^q
\Big(1-O\Big(\Big(\frac{R}{\lambda}
\Big)^{-(N-2)\big(q-\frac{N}{N-2}\big)}\Big)\Big), \eeq
\beq\label{UR-2} \int
|\eta_R{W_\lambda}|^2=\lambda^{2}\|\eta_{R/\lambda}^2 W_1\|_{2}^2 =
\left\{
\begin{array}{ll}
  O(\lambda^2 \log\frac{R}{\lambda}), & N=4,\bigskip\\
  O(\lambda R), & N=3.
\end{array}
\right.
\eeq
Using the above calculations we obtain an upper estimate of
$\sigma_\eps$ which is essential for further considerations.

\begin{lemma}\label{E-S}
We have
\beq\label{sigma} 0<\sigma_\eps\lesssim \left\{
\begin{array}{ll}
  \eps^{\frac{q-p}{q-2}}, & N\ge 5,\medskip\\
  \big(\eps\log\frac{1}{\eps}\big)^\frac{q-4}{q-2},& N=4,\medskip\\
  \eps^{\frac{q-6}{2 q-8}},& N=3.\\
\end{array}
\right.
\eeq
In particular, $\sigma_\eps \to 0$ as $\eps \to 0$.
\end{lemma}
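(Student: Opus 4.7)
The plan is to split the lemma into a strict lower bound $\sigma_\eps>0$ and a dimension-dependent upper bound, both obtained by direct comparison with the Sobolev functional $\S_\ast$.

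For the positivity $\sigma_\eps>0$, I would start from the pointwise inequality
\[ p^\ast F_\eps(u) \le u^p \quad \text{for all } u\in\R,\]
which holds because on $[0,1]$ one has $p^\ast F_\eps(u)=u^p-\tfrac{p^\ast}{q}u^q-\tfrac{p^\ast\eps}{2}u^2$, while for $u>1$ the inequality is checked by a short monotonicity argument using $f_\eps(u)=-\eps$; moreover the inequality is \emph{strict} on $\{u>0\}$. Integrating this gives $\S_\eps(w)\ge \S_\ast(w)\ge S_\ast$ for every $w\in\mathcal M_\eps$, hence $S_\eps\ge S_\ast$. To upgrade to strict inequality I would argue by contradiction: if $S_\eps=S_\ast$, then the minimizer $w_\eps$ of $(S_\eps)$ provided by \cite{BL-I} satisfies $\|\nabla w_\eps\|_2^2=S_\ast$ and $1=p^\ast\!\int F_\eps(w_\eps)\,dx<\int w_\eps^p\,dx$, whence $\S_\ast(w_\eps)=S_\ast\big(\int w_\eps^p\big)^{-(N-2)/N}<S_\ast$, contradicting the definition of $S_\ast$.

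For the upper bound I would use the Sobolev extremals $W_\lambda$ as test functions in $\S_\eps$, truncating by $\eta_R$ in the low dimensions where $W_\lambda\notin L^2(\R^N)$. In dimension $N\ge 5$, I plug $W_\lambda$ directly into $\S_\eps$ and use \eqref{W-ast}, \eqref{W-q}, \eqref{W-2} to write
\[ p^\ast\!\int F_\eps(W_\lambda)\,dx = 1 - c_1\lambda^{-\frac{N-2}{2}(q-p)}-c_2\eps\lambda^2,\]
with $c_1,c_2>0$. Expanding $(1-x)^{-(N-2)/N}=1+\frac{N-2}{N}x+O(x^2)$ for $x\to 0$ yields
\[ \sigma_\eps \le \S_\eps(W_\lambda)-S_\ast \lesssim \lambda^{-\frac{N-2}{2}(q-p)}+\eps\lambda^2,\]
and balancing the two contributions produces $\lambda\sim \eps^{-2/((N-2)(q-2))}$ (using the algebraic identity $(N-2)(q-p)+4=(N-2)(q-2)$ coming from $p=p^\ast$), which gives exactly $\sigma_\eps\lesssim\eps^{(q-p)/(q-2)}$. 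For $N=3,4$ I would instead use $\eta_R W_\lambda$ and the cut-off estimates \eqref{UR-D2}--\eqref{UR-2} to obtain, after the same expansion,
\[ \sigma_\eps \lesssim \Big(\tfrac{R}{\lambda}\Big)^{-(N-2)}+\lambda^{-\frac{N-2}{2}(q-p)}+\eps\!\int(\eta_R W_\lambda)^2\,dx.\]
In $N=4$ this reads $\sigma_\eps\lesssim (R/\lambda)^{-2}+\lambda^{-(q-4)}+\eps\lambda^2\log(R/\lambda)$; choosing $R/\lambda\sim\eps^{-1}$ kills the first term and leaves an optimization in $\lambda$ alone, whose minimum is $(\eps\log(1/\eps))^{(q-4)/(q-2)}$. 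In $N=3$ the bound is $\sigma_\eps\lesssim \lambda/R+\lambda^{-(q-6)/2}+\eps\lambda R$; minimizing first in $R$ gives $R\sim\eps^{-1/2}$, so that $\lambda/R+\eps\lambda R\sim\lambda\sqrt\eps$, and a second minimization in $\lambda$ yields the stated $\eps^{(q-6)/(2q-8)}$.

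The part I expect to be the most technically delicate is keeping track of all the error terms from the cutoff in the low-dimensional cases $N=3,4$: one has to be careful that the $(R/\lambda)^{-(N-2)}$ Dirichlet error, the truncation error in the $L^p$ and $L^q$ norms, and the logarithmic (or linear) behavior of $\|\eta_R W_\lambda\|_2^2$ are all simultaneously compatible with the choice of the free parameters $R$ and $\lambda$, and to verify that the Taylor expansion of $(1-x)^{-(N-2)/N}$ applies (i.e.\ that the constraint value stays bounded away from $0$ and $1$ for the optimal $\lambda$). Once these bookkeeping issues are handled, the three asserted rates in \eqref{sigma} emerge directly from the algebra above.
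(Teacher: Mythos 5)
Your proposal is correct and takes essentially the same route as the paper: positivity follows from the pointwise domination of $p^\ast F_\eps$ by $|\cdot|^p$ evaluated at the minimizer $w_\eps$ (the paper states this even more succinctly as $S_\ast\le\S_\ast(w_\eps)<\S_\eps(w_\eps)=S_\eps$, avoiding the contradiction wrapper), and the upper bounds use exactly the paper's test families $W_\lambda$ and $\eta_R W_\lambda$ with the same scaling-optimization in $\lambda$ (and $R$), landing on the identical rates. Your choice $R/\lambda\sim\eps^{-1}$ in dimension $N=4$ differs from the paper's $R_\eps=\eps^{-1/2}$, but both render the cutoff error subdominant and give the same answer, and you correctly flag that one must verify $\psi(\lambda_\eps,R_\eps)\to 0$ so the expansion of $(1-x)^{-(N-2)/N}$ is legitimate.
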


\proof To prove that $\sigma_\eps>0$ simply note that \beq
S_\ast\le\S_\ast(w_\eps)<\S_\eps(w_\eps)=S_\eps.  \eeq We shall now
establish the upper bound on $\sigma_\eps$, which clearly tends to   zero as $\eps \to 0$.

{\sc Case $N\ge 5$.}  Using ${W_\lambda}$ as a family of test
functions, we obtain that $W_\lambda \in \mathcal M_\eps$ for
sufficiently small $\eps$ and sufficiently large $\lambda$, and we
have \beq\label{S-N}
\S_\eps(W_\lambda)\le\frac{S_\ast}{\Big(1-\beta_2\eps\lambda^2-\beta_q
  \lambda^{-\frac{N-2}{2}(q-p)}\Big)^\frac{N-2}{N}}, \eeq where
\beq\beta_2:=\frac{p}{2}\|W_1\|_2^2,\qquad
\beta_q:=\frac{p}{q}\|W_1\|_q^q.\eeq
To minimize the right hand side of \eqref{S-N}, we have to minimize
the scalar function \beq\psi(\lambda):=\beta_2\eps\lambda^2+
\beta_q\lambda^{-\frac{N-2}{2}(q-p)}.\eeq It is easy to see that
$\psi$ achieves its minimum in scaling at \beq\label{Scaling-N}
\lambda_\eps = \eps^{-\frac{2}{(N-2) (q-2)}} \eeq and
\beq\min_{\lambda>0}\psi \sim \psi(\lambda_\eps) \sim
\eps^{\frac{q-p}{q-2}}.\eeq For $N\ge 5$, we conclude that
\beq\S_\eps(W_\lambda)\le
\frac{S_\ast}{\big(1-\psi(\lambda_\eps)\big)^\frac{N-2}{N}}=
S_\ast\big(1+O(\psi(\lambda_\eps)\big)
=S_\ast+O\big(\eps^{\frac{q-p}{q-2}}\big),\eeq and the bound
\eqref{sigma} is achieved on the function $W_{\lambda_\eps}$, where
$\lambda_\eps$ is given by \eqref{Scaling-N}.  \smallskip

{\sc Case $N=4$.}  Assume $R\gg\lambda$.  Testing against
$\eta_R{W_\lambda}$ and using calculations in
\eqref{UR-D2}--\eqref{UR-2} with $p = 4$, we obtain \beqn
\S_\eps(\eta_R{W_\lambda}) \le \left( S_\ast+
  O\big(\big(\frac{R}{\lambda}\big)^{-2}\big) \right) \hspace{5cm} \\
\times
{\Big(\big[1\!-\!O\big(\big(\frac{R}{\lambda}\big)^{-4}\big)\big]
  -\eps\lambda^2 O(\log \frac{R}{\lambda}) - \beta_q
  \lambda^{-(q-4)}\big[1\!-
  \!O\big(\big(\frac{R}{\lambda}\big)^{-2(q-2)}
  \big)\big]\Big)^{-\frac{1}{2}}} \nonumber \\
\le S_\ast\big(1+O(\psi(\lambda,R))\big), \nonumber \eeqn where
\beq\psi(\lambda,R) = \eps\lambda^2 O\Big(\log
\frac{R}{\lambda}\Big)+O\Big(\Big(\frac{R}{\lambda}\Big)^{-2}\Big) +
\beta_q \lambda^{-(q-4)}\big[1-o(1)\big]. \eeq Choose
\beq\label{Scaling-4}
\lambda_\eps=\Big(\eps\log\frac{1}{\eps}\Big)^{-\frac{1}{q-2}},\qquad
R_\eps=\eps^{-\frac{1}{2}}.  \eeq A routine calculation shows that as
$\eps\to 0$,
\beq\log\frac{R_\eps}{\lambda_\eps}\sim\log\frac{1}{\eps},\eeq and
hence
\beq\psi(\lambda_\eps,R_\eps)\sim\Big(\eps\log\frac{1}{\eps}\Big)^\frac{q-4}{q-2}.\eeq
Thus bound \eqref{sigma} is achieved by the test function
$\eta_{R_\eps}W_{\lambda_\eps}$, where $\lambda_\eps$ and $R_\eps$ are
given by \eqref{Scaling-4}.  \smallskip

{\sc Case $N=3$.}  Assume $R\gg\lambda$.  Testing against
$\eta_R{W_\lambda}$ and using calculations in
\eqref{UR-D2}--\eqref{UR-2} with $p = 6$, we obtain \beqn
\S_\eps(\eta_R{W_\lambda}) \le \left( {S_\ast+
    O\big(\big(\frac{R}{\lambda}\big)^{-1}\big)} \right) \hspace{5cm} \\
\times {\Big(\big[1\!-\!O\big(\big(\frac{R}{\lambda}\big)^{-3}\big)
  \big]-\eps\lambda O(R) - \beta_q
  \lambda^{-\frac{1}{2}(q-6)}\big[1\!-
  \!O\big(\big(\frac{R}{\lambda}\big)^{-(q-3)}\big)\big]
  \Big)^{-\frac{1}{3}}} \bigskip \nonumber \\
\le S_\ast\big(1+O(\psi(\lambda,R))\big), \nonumber \eeqn where
\beq\psi(\lambda,R) = \eps\lambda O\left( R \right) +
O\Big(\Big(\frac{R}{\lambda}\Big)^{-1}\Big)\Big) + \beta_q
\lambda^{-\frac{1}{2}(q-6)}\big[1-o(1)\big].\eeq Choosing
\beq\label{Scaling-3} \lambda_\eps = \eps^{-\frac{1}{q-4}},\qquad
R_\eps = \eps^{-\frac{1}{2}}, \eeq we then find that
\beq\psi(\lambda_\eps,R_\eps)\sim\eps^{\frac{q-6}{2q-8}},\eeq and the
bound \eqref{sigma} is achieved on the test function
$\eta_{R_\eps}W_{\lambda_\eps}$, where $\lambda_\eps$ and $R_\eps$ is
given by \eqref{Scaling-3}.  \qed

\subsection{Pokhozhaev estimates.}
Nehari identity \eqref{Nehari-var} combined with Pokhozhaev's identity
\eqref{Pokhozhaev-var} lead to the following important relations.

\begin{lemma}\label{L-pq}
  Set $\kappa:= \frac{q(p-2)}{2(q-p)} > 0$. Then \beq
  \|w_\eps\|_q^q=\kappa\eps\|w_\eps\|_2^2, \eeq \beq
  \|w_\eps\|_p^p=1+(\kappa+1)\eps\|w_\eps\|_2^2.  \eeq
\end{lemma}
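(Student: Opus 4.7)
The statement is a pair of scalar identities that follow from combining three relations $w_\eps$ is already known to satisfy: the Nehari identity \eqref{Nehari-var}, the Pokhozhaev identity \eqref{Pokhozhaev-var}, and the normalization $p^\ast\int F_\eps(w_\eps)\,dx = 1$ imposed by the constrained minimization $(S_\eps)$. In the present critical section we also have $p=p^\ast$, which will cause one coefficient to collapse and make the algebra go through cleanly.

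The first preliminary step is to check that $w_\eps\le 1$ pointwise, so that the truncation in the definition \eqref{f-eps} of $f_\eps$ and $F_\eps$ is inactive on the range of $w_\eps$. This is immediate from the rescaling \eqref{var-rescale}, which gives $w_\eps(x)=u_\eps(x/\sqrt{S_\eps})$, combined with the a priori bound \eqref{bound} and the radial monotonicity of $u_\eps$ from Theorem A. Consequently we may freely use the explicit polynomial formulas
\[
f_\eps(w_\eps)w_\eps=w_\eps^p-w_\eps^q-\eps w_\eps^2,\qquad F_\eps(w_\eps)=\tfrac{1}{p}w_\eps^p-\tfrac{1}{q}w_\eps^q-\tfrac{\eps}{2}w_\eps^2.
\]

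With $p=p^\ast$, substituting these expressions into \eqref{Nehari-var} and \eqref{Pokhozhaev-var} yields two expressions for $\int|\nabla w_\eps|^2$ in terms of $\|w_\eps\|_p^p$, $\|w_\eps\|_q^q$, and $\eps\|w_\eps\|_2^2$; the crucial observation is that in both identities the coefficient of $\|w_\eps\|_p^p$ equals $\theta_\eps$ (in Pokhozhaev because $p^\ast/p=1$). Subtracting one from the other eliminates both $\int|\nabla w_\eps|^2$ and $\|w_\eps\|_p^p$ and leaves
\[
\bigl(1-\tfrac{p^\ast}{q}\bigr)\|w_\eps\|_q^q+\bigl(1-\tfrac{p^\ast}{2}\bigr)\eps\|w_\eps\|_2^2=0,
\]
which rearranges to the first identity $\|w_\eps\|_q^q=\kappa\eps\|w_\eps\|_2^2$ with $\kappa=\frac{q(p-2)}{2(q-p)}$ upon using $p=p^\ast$.

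For the second identity I would feed this into the normalization $p^\ast\int F_\eps(w_\eps)\,dx=1$, which after multiplying by $p^\ast/p=1$ reads
\[
\|w_\eps\|_p^p=1+\tfrac{p^\ast}{q}\|w_\eps\|_q^q+\tfrac{p^\ast}{2}\eps\|w_\eps\|_2^2,
\]
and then substitute the value of $\|w_\eps\|_q^q$ just obtained. A short arithmetic check, using $p=p^\ast$ and the definition of $\kappa$, shows that $\frac{p^\ast\kappa}{q}+\frac{p^\ast}{2}=\kappa+1$, giving the claimed formula for $\|w_\eps\|_p^p$. There is no genuine obstacle here: the whole proof is linear algebra on three scalar identities, and the only point that requires any thought is the $L^\infty$ bound that justifies using the un-truncated polynomial form of $f_\eps$ and $F_\eps$.
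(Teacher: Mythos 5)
Your proof is correct and follows essentially the same route as the paper: both arguments reduce the Nehari and Pokhozhaev identities, together with the constraint normalization, to two scalar linear relations among $\|w_\eps\|_p^p$, $\|w_\eps\|_q^q$ and $\eps\|w_\eps\|_2^2$, then solve by elimination, with $p=p^\ast$ being the simplification that makes the $\|w_\eps\|_p^p$ coefficients coincide. The paper writes the two relations as $1=\|w_\eps\|_p^p-\|w_\eps\|_q^q-\eps\|w_\eps\|_2^2$ and $1=\|w_\eps\|_p^p-\frac{p}{q}\|w_\eps\|_q^q-\frac{p}{2}\eps\|w_\eps\|_2^2$ and leaves the algebra implicit; you carry out the same elimination explicitly and also spell out the (correct, and worth noting) preliminary point that $w_\eps\le 1$ so the truncation in $f_\eps$, $F_\eps$ is inactive.
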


\begin{proof}
  Since $w_\eps$ is a minimizer of $(S_\eps)$, identities
  \eqref{Nehari-var}--\eqref{theta} read as \beqn
  1&=&\|w_\eps\|_p^p-\|w_\eps\|_q^q-\eps\|w_\eps\|_2^2,\\
  1&=&\|w_\eps\|_p^p-\frac{p}{q}\|w_\eps\|_q^q-\frac{p}{2}\eps\|w_\eps\|_2^2.
  \eeqn Then the conclusion follows by a direct algebraic computation.
\end{proof}

\begin{lemma}\label{eps-2-crit}
  $\eps (\kappa+1)\|w_\eps\|_2^2\le
  \frac{N}{N-2}S_\ast^{-1}\sigma_\eps\big(1+o(1)\big).$
\end{lemma}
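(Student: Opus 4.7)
The plan is to combine the Sobolev inequality applied to $w_\eps$ with the algebraic identities from Lemma \ref{L-pq}, and then exploit the fact that $\sigma_\eps \to 0$ (established in Lemma \ref{E-S}) to linearize.

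First I would note that since $w_\eps$ is a minimizer of $(S_\eps)$, we have $\|\nabla w_\eps\|_2^2 = S_\eps = S_\ast + \sigma_\eps$. The sharp Sobolev inequality gives
\[
\|\nabla w_\eps\|_2^2 \,\ge\, S_\ast \|w_\eps\|_p^2 \,=\, S_\ast \bigl(\|w_\eps\|_p^p\bigr)^{\frac{N-2}{N}},
\]
using $p=p^\ast=\frac{2N}{N-2}$. Substituting the identity $\|w_\eps\|_p^p = 1 + (\kappa+1)\eps\|w_\eps\|_2^2$ from Lemma \ref{L-pq}, this becomes
\[
S_\ast + \sigma_\eps \,\ge\, S_\ast\bigl(1 + (\kappa+1)\eps\|w_\eps\|_2^2\bigr)^{\frac{N-2}{N}}.
\]

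Next, setting $t_\eps := (\kappa+1)\eps\|w_\eps\|_2^2 \ge 0$, the inequality above rearranges to
\[
(1+t_\eps)^{\frac{N-2}{N}} \,\le\, 1 + S_\ast^{-1}\sigma_\eps,
\]
equivalently $t_\eps \le (1 + S_\ast^{-1}\sigma_\eps)^{\frac{N}{N-2}} - 1$. Since Lemma \ref{E-S} ensures $\sigma_\eps \to 0$, the monotonicity of the right-hand side in $\sigma_\eps$ already forces $t_\eps \to 0$. A first-order Taylor expansion of the map $s \mapsto (1+s)^{N/(N-2)}$ at $s=0$ then yields
\[
t_\eps \,\le\, \frac{N}{N-2}\,S_\ast^{-1}\sigma_\eps \,+\, O(\sigma_\eps^2) \,=\, \frac{N}{N-2}\,S_\ast^{-1}\sigma_\eps\bigl(1+o(1)\bigr),
\]
which is the claimed bound.

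There is essentially no obstacle in this argument: the only delicate point is making sure we may replace $(1+t_\eps)^{(N-2)/N}$ by its linearization, and this is free once we know $t_\eps \to 0$, which is automatic from the already-proven decay $\sigma_\eps \to 0$. The conceptual content is simply that the ``Sobolev deficit'' $\sigma_\eps = S_\eps - S_\ast$ controls the $L^2$ mass of the rescaled minimizer at leading order.
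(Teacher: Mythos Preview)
Your proof is correct and essentially identical to the paper's: both apply the sharp Sobolev inequality to $w_\eps$, substitute $\|w_\eps\|_p^p = 1+(\kappa+1)\eps\|w_\eps\|_2^2$ from Lemma~\ref{L-pq}, and then linearize using $\sigma_\eps\to 0$ from Lemma~\ref{E-S}. The only cosmetic difference is that the paper raises both sides to the power $N/(N-2)$ before Taylor-expanding $S_\eps^{N/(N-2)}-S_\ast^{N/(N-2)}$, whereas you divide through by $S_\ast$ first and expand $(1+S_\ast^{-1}\sigma_\eps)^{N/(N-2)}-1$; the two are algebraically equivalent.
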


\begin{proof}
  Since $w_\eps$ is a minimizer of $(S_\eps)$, with the help of Lemma
  \ref{L-pq} we obtain \beq S_\ast\le\S_\ast(w_\eps)=\frac{\|\nabla
    w_\eps\|_2^2}{\|w_\eps\|_p^2}=
  \frac{S_\eps}{\Big(1+(\kappa+1)\eps\|w_\eps\|_2^2\Big)^\frac{N-2}{N}},
  \eeq or, equivalently, \beq
  S_\ast^\frac{N}{N-2}\big(1+(\kappa+1)\eps\|w_\eps\|_2^2\big)\le
  S_\eps^\frac{N}{N-2}.\eeq Since $\sigma_\eps:=S_\eps-S_\ast$,
  rearranging and differentiating, for $\eps\to 0$ we obtain \beq
  S_\ast^\frac{N}{N-2}(\kappa+1)\eps\|w_\eps\|_2^2\le
  S_\eps^\frac{N}{N-2}-S_\ast^\frac{N}{N-2}=
  \frac{N}{N-2}S_\ast^\frac{2}{N-2}\sigma_\eps+o(\sigma_\eps),\eeq so
  the conclusion follows.
\end{proof}

Combining the results of the three lemmas just proved, we obtain the
following result concerning the asymptotic behavior of different norms
associated with the minimizer $w_\eps$ of $(S_\eps)$.
\begin{corollary}
  \label{c:norms}
  As $\eps \to 0$, we have
  \begin{eqnarray}
    \label{normsasepsto0}
    \| w_\eps\|_p^p \to 1, \qquad \|w_\eps\|_q^q \to 0, \qquad \eps
    \|w_\eps\|_2^2 \to 0.
  \end{eqnarray}

\end{corollary}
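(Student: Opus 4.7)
The plan is to observe that all three statements follow immediately by chaining together the three lemmas just established, with no further estimates required.

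First I would extract the smallness of $\eps\|w_\eps\|_2^2$ from Lemma \ref{eps-2-crit} together with Lemma \ref{E-S}. Since $\kappa > 0$ is a fixed constant depending only on $p,q$, the inequality
\[
\eps(\kappa+1)\|w_\eps\|_2^2 \le \tfrac{N}{N-2}S_\ast^{-1}\sigma_\eps(1+o(1))
\]
together with the bound $\sigma_\eps \to 0$ supplied by Lemma \ref{E-S} (in all three dimensional regimes $N=3$, $N=4$, $N\ge 5$) yields $\eps\|w_\eps\|_2^2 \to 0$. This is the third claim of the corollary.

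Next, I would insert this convergence into the two identities of Lemma \ref{L-pq}. The identity $\|w_\eps\|_q^q = \kappa\eps\|w_\eps\|_2^2$ gives $\|w_\eps\|_q^q \to 0$ directly, and the identity $\|w_\eps\|_p^p = 1 + (\kappa+1)\eps\|w_\eps\|_2^2$ gives $\|w_\eps\|_p^p \to 1$. Both conclusions are immediate.

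There is no real obstacle here since all the work has been done in the preceding lemmas; the only thing to verify is that the expressions in Lemma \ref{E-S} for $\sigma_\eps$ genuinely tend to zero in each of the three dimensional cases, which is obvious from the positive exponents of $\eps$ (and the additional logarithmic factor in $N=4$ does not affect the limit). Thus the proof is a one-line combination of the three preceding statements, and can be written as such.
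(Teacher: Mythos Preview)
Your proposal is correct and matches the paper's approach exactly: the paper presents this corollary without a separate proof, simply stating that it follows by combining the three preceding lemmas, which is precisely what you do. The order in which you chain the lemmas (first Lemma~\ref{eps-2-crit} with Lemma~\ref{E-S} to get $\eps\|w_\eps\|_2^2\to 0$, then feed this into Lemma~\ref{L-pq}) is the natural one and leaves nothing to add.
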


\subsection{Optimal rescaling.}
Following \cite{Lions-I-1}, consider the concentration function \beq
Q_\eps(\lambda)=\int_{B_\lambda}|w_\eps|^p\,dx,\eeq where here and
everywhere below $B_\lambda$ is the ball of radius $\lambda$ centered
at the origin.  Clearly, $Q_\eps(\cdot)$ is strictly monotone
increasing, with $\lim_{\lambda \to 0} Q_\eps(\lambda)=0$ and
$\lim_{\lambda\to\infty}Q_\eps(\lambda)= \|w_\eps\|_p^p \to 1$ as
$\eps \to 0$ in view of Corollary \ref{c:norms}.
Therefore, the equation $ Q_\eps(\lambda)=Q_\ast$ with
\beq\label{Q-ast} Q_\ast:=\int_{B_1}|W_1(x)|^p\,dx < 1, \eeq has a
unique solution $\lambda = \lambda_\eps>0$ whenever $\eps \ll 1$:
\begin{eqnarray}
  \label{optimal}
  Q_\eps(\lambda_\eps) = Q_\ast.
\end{eqnarray}
Similarly, since the function \beq Q_0(\lambda) :=
\int_{B_{\lambda^{-1}}} |W_1(x)|^p \, dx = \int_{B_1} |W_\lambda(x)|^p
\, dx \eeq is strictly monotone decreasing, with $\lim_{\lambda \to 0}
Q_0(\lambda) = 1$ and $\lim_{\lambda \to \infty} Q_0(\lambda) = 0$,
there is a unique solution to the equation $Q_0(\lambda) = Q_\ast$. In
fact, by the definition of $Q_\ast$ this equation is satisfied if and
only if $\lambda = 1$.

Using the value of $\lambda_\eps$ implicitly determined by
\eqref{optimal}, we define the rescaled family \beq\label{omega}
\v_\eps(x):=\lambda_\eps^{\frac{N-2}{2}}w_{\eps}\big(\lambda_\eps
x\big).  \eeq Note that \beq \label{w-2p} \|\v_\eps\|_p=\|w_\eps\|_p=1
+ o(1),\qquad\|\nabla\v_\eps\|_2^2=\|\nabla w_\eps\|_2^2=
S_\ast+o(1),\eeq i.e. $(\v_\eps)$ is a minimizing family for $S_\ast$.
Note also that \beq\int_{B_1}|\v_\eps(x)|^p\,dx=Q_\ast.\eeq The next
statement is a direct consequence of the Concentration--Compactness
Principle of P.L.Lions, cf. \cite[Chapter I, Theorem 4.9]{Struwe}.

\begin{lemma}\label{l-dist}
  $\|\nabla\big(\v_\eps-W_1\big)\|_2\to 0$ and $\|\v_\eps-W_1\|_p\to
  0$ as $\eps\to 0$.
\end{lemma}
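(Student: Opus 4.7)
The plan is to view $(v_\eps)$ as a minimizing family for the critical Sobolev constant $S_\ast$, apply the concentration--compactness principle to extract an Aubin--Talenti bubble as its limit (up to dilation), and then use the specific normalization \eqref{optimal} together with the radial symmetry of $v_\eps$ to pin the bubble down to $W_1$ itself. The relevant minimizing property is read off from \eqref{w-2p} together with Corollary \ref{c:norms}: we have $\|v_\eps\|_p\to 1$ and $\|\nabla v_\eps\|_2^2\to S_\ast$, hence $\mathcal{S}_\ast(v_\eps)\to S_\ast$. Moreover each $v_\eps$ is nonnegative, radial and nonincreasing (inherited from $w_\eps$ via \eqref{omega}).

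Applying the concentration--compactness principle for the critical Sobolev quotient (\cite[Chapter I, Theorem 4.9]{Struwe}) to the densities $|\nabla v_\eps|^2\,dx$ and $|v_\eps|^p\,dx$, the vanishing alternative is excluded by $\|v_\eps\|_p\to 1$, and the dichotomy alternative is excluded by the strict subadditivity of $S_\ast$ as a function of $L^p$-mass. What remains is compactness up to translation and dilation: along a subsequence there exist $\mu_\eps>0$ and $y_\eps\in\R^N$ such that
\[
\tilde v_\eps(x) := \mu_\eps^{(N-2)/2}\,v_\eps(\mu_\eps x + y_\eps)
\]
converges strongly in $D^1(\R^N)$, and therefore (by the continuous embedding $D^1\hookrightarrow L^p$) also in $L^p(\R^N)$, to some Aubin--Talenti bubble $W_\lambda$. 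Radial monotonicity of $v_\eps$ about the origin, combined with the fact that an $L^p$-concentration profile for a radially nonincreasing sequence must be centered at the origin, lets us take $y_\eps=0$; absorbing an extra scale into $\mu_\eps$ we may assume the limit profile is $W_1$, so that $\tilde v_\eps\to W_1$ in $D^1\cap L^p$.

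The main obstacle, and the sole reason for introducing the specific rescaling \eqref{optimal}, is showing $\mu_\eps\to 1$. First one rules out $\mu_\eps\to 0$ and $\mu_\eps\to\infty$: in both cases the $L^p$-mass of $v_\eps$ either concentrates at the origin or spreads to infinity, contradicting $\int_{B_1}|v_\eps|^p\,dx = Q_\ast\in(0,1)$. Hence $\mu_\eps$ is bounded above and below. Along any subsequence with $\mu_\eps\to\mu^\ast\in(0,\infty)$, the $D^1\cap L^p$ convergence of $\tilde v_\eps$ transforms, via the inverse dilation (an isometry of $D^1$ and $L^p$), into $v_\eps\to W_{\mu^\ast}$ in $L^p$. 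Combining this with the normalization \eqref{optimal} gives
\[
Q_\ast = \int_{B_1} |v_\eps|^p\,dx \longrightarrow \int_{B_1} |W_{\mu^\ast}|^p\,dx = Q_0(\mu^\ast),
\]
and since $Q_0$ is strictly monotone with $Q_0(1)=Q_\ast$ the unique solution, we conclude $\mu^\ast=1$. As the limit is independent of the subsequence, the whole family satisfies $\mu_\eps\to 1$. Writing $v_\eps = R_{\mu_\eps}\tilde v_\eps$ with $R_\mu u(x) := \mu^{-(N-2)/2}u(x/\mu)$ and using the isometry property of $R_\mu$ together with the continuity $R_{\mu_\eps}W_1\to W_1$ in $D^1\cap L^p$ as $\mu_\eps\to 1$, the claimed convergences $\|\nabla(v_\eps-W_1)\|_2\to 0$ and $\|v_\eps-W_1\|_p\to 0$ follow.
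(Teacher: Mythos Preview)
Your proof is correct and follows essentially the same route as the paper: apply concentration--compactness to the minimizing family $(v_\eps)$ for $S_\ast$, then use the normalization $\int_{B_1}|v_\eps|^p = Q_\ast$ together with radial symmetry to identify the limiting bubble as $W_1$. The paper's version is slightly more compressed---it first extracts a weak $D^1$-limit $w_0$, invokes concentration--compactness (via \cite[Chapter I, Theorem 4.9]{Struwe}) to upgrade this directly to strong convergence, and then identifies $w_0=W_1$ from $\int_{B_1}|w_0|^p=Q_\ast$---whereas you make the dilation parameter $\mu_\eps$ explicit and argue $\mu_\eps\to 1$; the two arguments amount to the same thing.
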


\proof By \eqref{w-2p}, for any sequence $\eps_n\to 0$ there exist a
subsequence $(\eps_{n'})$ such that $(\v_{\eps_{n'}})$ converges
weakly in $D^1(\R^N)$ to some radial function $w_0\in
D^1(\R^N)$. Applying the Concentration--Compactness Principle
(cf. \cite[Chapter I, Theorem 4.9]{Struwe} or \cite[Theorem
1.41]{Willem}) to $\| v_\eps \|_p^{-1} v_\eps$, we further conclude
that in fact $(\v_{\eps_{n'}})$ converges to $w_0$ strongly in
$D^1(\R^N)$ and $L^p(\R^N)$. As a consequence, $\|w_0\|_p=1$ and hence
$w_0$ is a radial minimizer of $(S_\ast)$, that is
$w_0\in\{W_\lambda\}_{\lambda>0}$.  Furthermore,
\beq\int_{B_1}|w_0(x)|^p\,dx=Q_\ast.\eeq We therefore conclude that
$w_0=W_1$.  Finally, by uniqueness of the limit the full sequence
$(\v_n)$ converges to $W_1$ strongly in $D^1(\R^N)$ and $L^p(\R^N)$.
\qed

\subsection{Rescaled equation estimates.}

The rescaled minimizer $\v_\eps$ defined in \eqref{omega} solves the
equation
$$
-\Delta v_{\eps} + S_\eps \eps\lambda_\eps^{2}\,\v_{\eps}
=S_\eps\big(|\v_{\eps}|^{p-2}\v_{\eps}-
\lambda_\eps^{-\frac{2(q-p)}{p-2}}\,|\v_{\eps}|^{q-2}\v_{\eps}\big),
\leqno{(R^{\ast}_\eps)}
$$
obtained from the Euler--Lagrange equation \eqref{Euler-var} for
$(S_\eps)$.
From the definition of $\v_\eps$ we obtain
\beq\label{w-q2} \|\v_\eps\|_q^q=\lambda_\eps^{\frac{2(q-p)}{p-2}}
\|w_\eps\|_q^q,\qquad \|\v_\eps\|_2^2
=\lambda_\eps^{-2}\|w_\eps\|_2^2.  \eeq
From Lemma \ref{L-pq} and Lemma \ref{eps-2-crit} we then derive the
essential relation \beq\label{important}
\lambda_\eps^{-\frac{2(q-p)}{p-2}}\|\v_\eps\|_q^q=
\kappa\eps\lambda_\eps^{2}\|\v_\eps\|_2^2 \lesssim \sigma_\eps, \eeq
which leads to the following two--sided estimate.

\begin{lemma}\label{lambda-main}
  $\sigma_\eps^{-\frac{p-2}{2(q-p)}} \lesssim \lambda_\eps \lesssim
  \eps^{-\frac12}\sigma_\eps^{\frac{\eps}{2}}$.
\end{lemma}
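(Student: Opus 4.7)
The plan is to extract both inequalities directly from the key relation \eqref{important},
\begin{equation*}
\lambda_\eps^{-\frac{2(q-p)}{p-2}}\|\v_\eps\|_q^q \;=\; \kappa\eps\lambda_\eps^{2}\|\v_\eps\|_2^2 \;\lesssim\; \sigma_\eps,
\end{equation*}
provided I also know that the rescaled minimizers $\v_\eps$ stay bounded away from zero in both $L^q(\R^N)$ and $L^2(\R^N)$ as $\eps\to 0$. Rearranging the left-hand identity yields $\lambda_\eps^{2(q-p)/(p-2)} \gtrsim \|\v_\eps\|_q^q/\sigma_\eps$, so the claimed lower bound reduces to $\|\v_\eps\|_q \gtrsim 1$. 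Rearranging the right-hand identity gives $\lambda_\eps^2 \lesssim \sigma_\eps/(\eps\|\v_\eps\|_2^2)$, so the upper bound reduces to $\|\v_\eps\|_2 \gtrsim 1$, which first produces the stronger estimate $\lambda_\eps \lesssim \eps^{-1/2}\sigma_\eps^{1/2}$. Since Lemma \ref{E-S} guarantees $\sigma_\eps \in (0,1)$ for $\eps$ small and $\eps \in (0,1)$, the map $a\mapsto \sigma_\eps^a$ is strictly decreasing, so $\sigma_\eps^{1/2} \le \sigma_\eps^{\eps/2}$ and the stated upper bound $\lambda_\eps \lesssim \eps^{-1/2}\sigma_\eps^{\eps/2}$ is immediate.

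To secure the two norm lower bounds I invoke Lemma \ref{l-dist}, which gives $\v_\eps \to W_1$ strongly in $L^p(\R^N)$. Given any sequence $\eps_n\to 0$ I may extract a subsequence $(\eps_{n_k})$ along which $\v_{\eps_{n_k}} \to W_1$ pointwise almost everywhere on $\R^N$. Fatou's lemma, applied on the fixed ball $B_1$, then gives
\begin{equation*}
\liminf_{k\to\infty} \int_{B_1}|\v_{\eps_{n_k}}|^s\,dx \;\ge\; \int_{B_1}|W_1|^s\,dx \;>\; 0, \qquad s\in\{2,q\}.
\end{equation*}
A standard contradiction argument (any sequence along which the global norm $\|\v_\eps\|_s$ tends to zero admits a subsequence violating the above Fatou bound) upgrades this to $\liminf_{\eps\to 0}\|\v_\eps\|_s > 0$, which is exactly $\|\v_\eps\|_s \gtrsim 1$.

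The one mild obstacle is that in dimensions $N=3,4$ one has $W_1 \notin L^2(\R^N)$, so no global $L^2$ limit of $\v_\eps$ is available. This is harmless, because Fatou is applied on the bounded set $B_1$ on which $W_1\in L^2(B_1)$; in fact in these low dimensions one expects $\|\v_\eps\|_2 \to \infty$, which only sharpens the upper estimate. With these two norm lower bounds in hand, the lemma reduces to pure algebra on \eqref{important}, yielding both inequalities simultaneously.
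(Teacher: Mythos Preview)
Your proof is correct and follows the same overall scheme as the paper: both arguments reduce the lemma to relation \eqref{important} together with the two norm lower bounds $\liminf_{\eps\to 0}\|\v_\eps\|_q>0$ and $\liminf_{\eps\to 0}\|\v_\eps\|_2>0$. The only difference is in how these lower bounds are obtained: the paper uses the local embeddings $L^q(B_1)\hookrightarrow L^p(B_1)$ and $L^p(B_1)\hookrightarrow L^2(B_1)$ combined with the triangle inequality and the $L^p$ convergence of Lemma~\ref{l-dist}, whereas you extract an a.e.\ convergent subsequence and apply Fatou on $B_1$. Both routes are elementary and equally valid. Your treatment of the exponent $\sigma_\eps^{\eps/2}$ in the stated upper bound is also sound (and in fact the paper itself subsequently uses the stronger bound $\lambda_\eps\lesssim \eps^{-1/2}\sigma_\eps^{1/2}$ in \eqref{lambda+lower}, confirming that the $\eps/2$ is a typographical slip for $1/2$).
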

\proof Follows directly from \eqref{important} by observing that
\beq\liminf_{\eps \to 0} \|\v_\eps\|_q>0,\qquad\liminf_{\eps \to 0}
\|\v_\eps\|_2>0.\eeq To prove the latter, we note that by Lemma
  \ref{l-dist} and in view of the embedding $L^q(B_1)\subset L^p(B_1)$
  we have \beq c\|\v_\eps\chi_{B_1}\|_q\ge
\|\v_\eps\chi_{B_1}\|_p\ge\|W_1\chi_{B_1}\|_p-\|(W_1-\v_\eps)\chi_{B_1}
\|_p= \|W_1\chi_{B_1}\|_p -o(1),\eeq where here and below $\chi_{B_R}$
is the characteristic function of $B_R$. Similarly, in view of
the embedding $L^p(B_1)\subset L^2(B_1)$ we obtain
\beq\|\v_\eps\chi_{B_1}\|_2\ge\|W_1\chi_{B_1}\|_2-\|(W_1-\v_\eps)\chi_{B_1}
\|_2 = \|W_1\chi_{B_1}\|_2 -o(1),\eeq so the assertion follows.  \qed

Using estimate \eqref{sigma},
we extract from Lemma \ref{lambda-main} a lower bound
\beq\label{lambda-} \lambda_\eps ~\gtrsim~
\sigma_\eps^{-\frac{1}{2}\frac{p-2}{q-p}} ~\gtrsim~ \left\{
\begin{array}{ll}
  \eps^{-\frac{p-2}{2 q-4}}, & N\ge 5,\medskip\\
  \big(\eps\log\frac{1}{\eps}\big)^{-\frac{1}{q-2}},& N=4,\medskip\\
  \eps^{-\frac{1}{q-4}},& N=3,\\
\end{array}
\right.
\eeq
and an upper bound
\beq\label{lambda+}
\lambda_\eps ~\lesssim ~ \left\{
\begin{array}{ll}
  \eps^{-\frac{1}{2}\frac{p-2}{q-2}}, & N\ge 5,\medskip\\
  \eps^{-\frac{1}{q-2}}\big(\log\frac{1}{\eps}\big)^{\frac{q-4}{2
        q-4}},&
  N=4,\medskip\\
  \eps^{-\frac{q-2}{4 (q-4)}},& N=3.\\
\end{array}
\right.
\eeq
Note that for $N\ge 5$ the above lower and upper estimates are equivalent,
and as a consequence we obtain the following.

\begin{corollary}
Assume $N\ge 5$. Then $\|\v_\eps\|_q$ and $\|\v_\eps\|_2$ are bounded.
\end{corollary}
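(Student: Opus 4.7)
The plan is to combine the two-sided bounds on $\lambda_\eps$ (which coincide in scaling for $N\ge 5$) with the central identity \eqref{important} of Lemma \ref{lambda-main}, and then plug in the explicit upper bound on $\sigma_\eps$ from Lemma \ref{E-S}.

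First I would observe that for $N\ge 5$ the lower bound \eqref{lambda-} reads $\lambda_\eps\gtrsim \eps^{-\frac{p-2}{2(q-2)}}$, while the upper bound \eqref{lambda+} reads $\lambda_\eps\lesssim \eps^{-\frac{p-2}{2(q-2)}}$. Thus in this dimensional range the two bounds match and yield
\begin{equation*}
\lambda_\eps \sim \eps^{-\frac{p-2}{2(q-2)}}.
\end{equation*}
In particular $\lambda_\eps^{\frac{2(q-p)}{p-2}} \sim \eps^{-\frac{q-p}{q-2}}$ and $\eps\lambda_\eps^{2} \sim \eps^{\frac{q-p}{q-2}}$.

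Next I would rewrite \eqref{important} as the two estimates
\begin{equation*}
\|\v_\eps\|_q^q \;\lesssim\; \sigma_\eps \,\lambda_\eps^{\frac{2(q-p)}{p-2}},
\qquad
\|\v_\eps\|_2^2 \;\lesssim\; \frac{\sigma_\eps}{\eps\,\lambda_\eps^{2}}.
\end{equation*}
Invoking Lemma \ref{E-S} in the form $\sigma_\eps \lesssim \eps^{\frac{q-p}{q-2}}$ valid for $N\ge 5$, both right-hand sides become $O(1)$ after substituting the scaling of $\lambda_\eps$ above. This delivers the boundedness of $\|\v_\eps\|_q$ and $\|\v_\eps\|_2$ simultaneously.

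There is really no genuine obstacle here: the corollary is a bookkeeping consequence of the already-established Pokhozhaev/Nehari relation \eqref{important} together with the matching upper and lower bounds on $\lambda_\eps$. The only point requiring mild care is verifying algebraically that the two exponents indeed cancel, i.e.\ that $\frac{p-2}{2(q-2)}\cdot\frac{2(q-p)}{p-2} = \frac{q-p}{q-2}$ and $1-\frac{p-2}{q-2} = \frac{q-p}{q-2}$, which is immediate. The qualitative content is that for $N\ge 5$ the canonical Sobolev-type scaling for $W_{\lambda_\eps}$ captures $w_\eps$ without any logarithmic corrections, so the rescaled family $(\v_\eps)$ sits inside the usual energy spaces uniformly in $\eps$.
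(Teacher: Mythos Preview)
Your proof is correct and follows essentially the same route as the paper, which simply states that the corollary ``follows from \eqref{important}, \eqref{lambda-} and \eqref{lambda+}''; you have spelled out the arithmetic behind that one-line reference. The only minor quibble is that invoking Lemma~\ref{E-S} separately is redundant, since the $\sigma_\eps$ bound is already absorbed into \eqref{lambda-} and \eqref{lambda+}.
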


\proof
Follows from \eqref{important}, \eqref{lambda-} and \eqref{lambda+}.
\qed

In the lower dimensions the growth of $\|\v_\eps\|_2$ is to be taken
into account to obtain matching bounds, so instead of \eqref{lambda+}
we shall use a more explicit upper bound \beq\label{lambda+lower}
\lambda_\eps ~\lesssim~
\frac{\eps^{-1/2}\sigma_\eps^{\frac{1}{2}}}{\|\v_\eps\|_2} ~\lesssim~
\|\v_\eps\|_2^{-1}\left\{
\begin{array}{ll}
  \eps^{-\frac{1}{q-2}}\big(\log\frac{1}{\eps}\big)^{\frac{q-4}{2
        q-4}},&
  N=4,\medskip\\
  \eps^{-\frac{q-2}{4 (q-4)}},& N=3,\\
\end{array}
\right.
\eeq
which is also a combination of \eqref{important} and \eqref{sigma}.

\subsection{A lower barrier.}
To control the norm $\|\v_\eps\|_2$, we note that \beq -\Delta
\v_\eps+S_\eps\eps\lambda_\eps^2
\v_\eps=S_\eps\Big(\v_\eps^{p-1}-\lambda_\eps^{-\frac{2
      (q-p)}{p-2}}\v_\eps^{q-1}\Big)\ge -V_\eps(x)\,\v_\eps,\qquad
x\in\R^N,\eeq where \beq
V_\eps(x):=S_\eps\lambda_\eps^{-\frac{2(q-p)}{p-2}}
\v_\eps^{q-2}(x).\eeq According to the radial estimate \eqref{P-2},
\beq\label{P-2-p+} u_\eps(x)\le C_p
|x|^{-\frac{2}{p-2}}\|u_\eps\|_p, \eeq Using \eqref{w-2p} and
the   fact that $\lambda_\eps^{-\frac{2(q-p)}{p-2}} ~\lesssim~
\sigma_\eps\to 0$ by Lemmas \ref{E-S} and \ref{lambda-main}, for
sufficiently small $\eps>0$ we obtain \beq
V_\eps(x)=S_\eps\lambda_\eps^{-\frac{2(q-p)}{p-2}}\v_\eps^{q-2}(x)
\le S_\eps\lambda_\eps^{-\frac{2(q-p)}{p-2}}C_p^{q-2}
\|\v_\eps\|_p^{q-2}|x|^{-\frac{2(q-2)}{p-2}}\le
C|x|^{-\frac{2(q-2)}{p-2}},\eeq where the constant $C>0$ does not
depend on $\eps$ or $x$.  Therefore, for small $\eps>0$ solutions
$v_\eps>0$ satisfy the linear inequality \beq\label{linear}
-\Delta \v_\eps+V_0(x)\v_\eps+S_\eps\eps\lambda_\eps^2 \v_\eps\ge
0,\qquad x\in\R^N, \eeq where
$V_0(x):=C|x|^{-\frac{2(q-2)}{p-2}}$.

\begin{lemma}\label{l-barrier}
  There exists $R>0$ and $c>0$ such that for all small $\eps>0$ \beq
  v_\eps(x)\ge c|x|^{-(N-2)}e^{-\sqrt{\eps
      S_\eps}\lambda_\eps|x|}\qquad(|x|>R).\eeq
\end{lemma}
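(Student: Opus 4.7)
The plan is to bound $v_\eps$ from below on $\{|x|>R\}$ by a subsolution of the Helmholtz equation $(-\Delta + k^2)\psi = 0$, where $k^2:=\eps S_\eps\lambda_\eps^2$, and then apply the weak maximum principle. I would work with this cleaner Helmholtz equation rather than the linear inequality \eqref{linear} because the nonlinear source in $(R^\ast_\eps)$ is in fact pointwise nonnegative: since $w_\eps(0)=u_\eps(0)\le 1$ by \eqref{bound} and $v_\eps$ is radially nonincreasing with $v_\eps(x)\le v_\eps(0)=\lambda_\eps^{(N-2)/2}w_\eps(0)\le\lambda_\eps^{2/(p-2)}$ (using $p=p^\ast$), the critical term dominates the subcritical correction and
\[
S_\eps v_\eps^{p-1} - S_\eps\lambda_\eps^{-2(q-p)/(p-2)}v_\eps^{q-1} \ge 0.
\]
Hence $(-\Delta+k^2)v_\eps\ge 0$ throughout $\R^N$.

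Next I would introduce the explicit barrier $\phi(x):=c|x|^{-(N-2)}e^{-k|x|}$. A direct radial computation using $\Delta=\partial_{rr}+\frac{N-1}{r}\partial_r$ shows that the inverse-square terms cancel and yields
\[
(-\Delta+k^2)\phi = -\frac{(N-3)k}{|x|}\,\phi \le 0 \qquad(x\neq 0),
\]
for any $N\ge 3$, any $k\ge 0$ and any $c>0$, so $\phi$ is a classical subsolution of the same Helmholtz equation off the origin. (For $N=3$ the right-hand side is simply zero, so $\phi$ actually solves the equation.)

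To fix $c$, I would choose $R>0$ arbitrary and produce $\delta>0$ independent of $\eps$ with $v_\eps(x)\ge\delta$ for $|x|=R$. By Lemma \ref{l-dist}, $v_\eps\to W_1$ in $D^1(\R^N)\cap L^p(\R^N)$; combining the radial decay estimate \eqref{P-2} with the uniform bound on $\|v_\eps\|_p$ gives a uniform $L^\infty$ bound on $v_\eps$ away from the origin. A standard elliptic regularity bootstrap applied to $(R^\ast_\eps)$, whose coefficients are locally uniformly bounded on $\{|x|\ge R/2\}$, then upgrades this to $C^2$ convergence on the annulus $\{R/2\le|x|\le 2R\}$, and in particular $v_\eps(R)\to W_1(R)>0$. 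I would then set $\delta:=\tfrac12 W_1(R)$ and $c:=\delta R^{N-2}$, so that $\phi(x)\le cR^{-(N-2)}=\delta\le v_\eps(x)$ on $|x|=R$ for all sufficiently small $\eps$, irrespective of the value of $k$.

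Finally, the weak maximum principle applied to $\psi:=v_\eps-\phi$ on the unbounded domain $\{|x|>R\}$ closes the argument: $(-\Delta+k^2)\psi\ge 0$ in the interior with $k^2\ge 0$, $\psi\ge 0$ on $|x|=R$, and $\psi\to 0$ as $|x|\to\infty$, so a negative interior minimum is impossible and $\psi\ge 0$ on $\{|x|>R\}$, which is the claim. The one non-routine point in this plan is the local $C^2$ convergence of $v_\eps$ to $W_1$ across a fixed annulus; the radial $L^\infty$ bound \eqref{P-2} is essential here because the natural pointwise bound $v_\eps\le\lambda_\eps^{(N-2)/2}$ at the origin is not uniform in $\eps$, but this defect is irrelevant since only values on the sphere $|x|=R$ are used.
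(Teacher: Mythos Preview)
Your proof is correct and takes a genuinely simpler route than the paper. The key difference is your observation that the full nonlinear right-hand side of $(R^\ast_\eps)$ is pointwise nonnegative (because $v_\eps \le v_\eps(0)=\lambda_\eps^{(N-2)/2}w_\eps(0)\le\lambda_\eps^{2/(p-2)}$), which yields the clean Helmholtz supersolution inequality $(-\Delta + k^2)v_\eps \ge 0$ with $k^2 = S_\eps\eps\lambda_\eps^2$. This lets you use the target profile $\phi(x) = c|x|^{-(N-2)}e^{-k|x|}$ itself as the barrier, since a direct radial computation gives $(-\Delta + k^2)\phi = -(N-3)k|x|^{-1}\phi \le 0$ for all $x\neq 0$. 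The paper instead discards the favourable $p$-power and retains only the weaker linear inequality \eqref{linear} carrying the potential $V_0(x) = C|x|^{-2(q-2)/(p-2)}$; to absorb this extra term it must build a two-term barrier $h_\eps = (|x|^{-(N-2)} + |x|^\beta)e^{-k|x|}$ with an auxiliary exponent $\beta \in \big(-(N-2) - \tfrac{2(q-p)}{p-2},\, -(N-2)\big)$, and the resulting computation is longer and forces $R\gg 1$. Your approach buys a cleaner one-line barrier verification that works for any fixed $R>0$; the paper's approach has the minor advantage of not invoking the amplitude bound $w_\eps(0)\le 1$, relying solely on the $L^p$ control of $v_\eps$, but in the present setting both ingredients are already on the table. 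Your handling of the boundary datum at $|x|=R$ (uniform positivity via $v_\eps\to W_1$ on a fixed annulus) and the comparison step on the exterior domain match the paper's.
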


\proof Define the barrier \beq
h_\eps(x):=\big(|x|^{-(N-2)}+|x|^\beta\big)e^{-\sqrt{\eps
    S_\eps}\lambda_\eps|x|},\eeq where $\beta ~< 0$ is fixed in
such a way that \beq -(N-2)-\frac{2(q-p)}{p-2}<\beta<-(N-2),\eeq
and the value of $c>0$ will be specified later.  A direct computation
then shows that for some $R\gg 1$ one get
\begin{multline}
  -\Delta h_\eps+V_0(x)h_\eps+\eps\lambda_\eps^2 h_\eps\\
  =\big\{-\beta(\beta+N-2)|x|^{\beta-2}+C\big(|x|^{-(N-2)}
  +|x|^{\beta}\big)|x|^{-2\frac{q-2}{p-2}}\\
  +\sqrt{\eps S_\eps}\lambda_\eps\big((2\beta+N-1)|x|^{\beta-1}
  +(3-N)|x|^{-(N-1)}\big)\big\}\,e^{-\sqrt{\eps
      S_\eps}\lambda_\eps|x|}\\
  \le \big\{-\beta(\beta+N-2)|x|^{\beta-2}
  +2C|x|^{-(N-2)-2\frac{q-2}{p-2}}\big\}\,e^{-\sqrt{\eps
      S_\eps}\lambda_\eps|x|}\le 0,
\end{multline}
for all $|x|>R$, where $R\gg 1$ can be chosen independent of $\eps>0$.

Note that Lemmas \ref{l-dist} and \ref{l:radial} imply
\beq\|(\v_\eps-W_1)\chi_{B_{2R}\setminus B_{R/2}}\|_\infty\to 0,\eeq
and hence \beq\v_\eps(R)\ge \frac{1}{2}W_1(R),\eeq for all
sufficiently small $\eps>0$. Choose $c>0$ so that \beq
c(R^{-(N-2)}+R^\beta)\le \frac{1}{2}W_1(R).\eeq Then \beq\v_\eps\ge c
h_\eps\quad\text{for $|x|>R$,}\eeq by the comparison principle for the
operator $-\Delta +V_0+\eps\lambda_\eps^2$, (see, e.g.,
\cite[Theorem 2.7]{Agmon}).  \qed

\subsection{Case $N=3$ and $N=4$ completed.} We shall apply Lemma
\ref{l-barrier} to obtain matching estimates on the blow--up of
$\|\v_\eps\|_2$ in low dimensions.

\begin{lemma}
  If $N=3$ then $\|\v_\eps\|_2^2 ~\gtrsim~
  \frac{1}{\sqrt{\eps}\lambda_\eps}$.
\end{lemma}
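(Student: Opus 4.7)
The plan is to integrate the pointwise lower barrier from Lemma \ref{l-barrier} and exploit a dimensional miracle: in $N=3$ the $|x|^{-(N-2)} = |x|^{-1}$ decay in the barrier is perfectly balanced against the $r^{N-1} = r^2$ Jacobian, so that after squaring only a pure exponential tail survives. Concretely, Lemma \ref{l-barrier} gives $v_\eps(x) \geq c|x|^{-1} e^{-\sqrt{\eps S_\eps}\lambda_\eps |x|}$ for $|x|>R$, with $R,c>0$ independent of $\eps$, whence
\begin{equation*}
\|v_\eps\|_2^2 \;\geq\; c^2 \int_{|x|>R} |x|^{-2} e^{-2\sqrt{\eps S_\eps}\lambda_\eps |x|}\,dx \;=\; 4\pi c^2 \int_R^\infty e^{-2\sqrt{\eps S_\eps}\lambda_\eps r}\,dr \;=\; \frac{2\pi c^2}{\sqrt{\eps S_\eps}\,\lambda_\eps}\, e^{-2\sqrt{\eps S_\eps}\lambda_\eps R}.
\end{equation*}

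Since $S_\eps \to S_\ast > 0$, the claimed bound $\|v_\eps\|_2^2 \gtrsim (\sqrt{\eps}\lambda_\eps)^{-1}$ will follow as soon as the exponential prefactor $e^{-2\sqrt{\eps S_\eps}\lambda_\eps R}$ is bounded below by a positive constant, i.e.\ as soon as $\sqrt{\eps}\lambda_\eps$ is bounded above. For this I invoke the upper bound \eqref{lambda+} in dimension $N=3$, namely $\lambda_\eps \lesssim \eps^{-(q-2)/(4(q-4))}$, which gives
\begin{equation*}
\sqrt{\eps}\,\lambda_\eps \;\lesssim\; \eps^{\frac{1}{2}-\frac{q-2}{4(q-4)}} \;=\; \eps^{\frac{q-6}{4(q-4)}}.
\end{equation*}
Since in dimension $N=3$ we have $p^\ast = 6$ and hence $q>6$ by hypothesis, the exponent $(q-6)/(4(q-4))$ is strictly positive, so $\sqrt{\eps}\lambda_\eps \to 0$; in particular $e^{-2\sqrt{\eps S_\eps}\lambda_\eps R} \to 1$, yielding the required estimate.

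The step with any real content is verifying that $\sqrt{\eps}\lambda_\eps$ stays bounded, and the only reason this works out is precisely the strict supercriticality $q > p^\ast = 6$ in three dimensions; everything else is a one-line spherical integration. There is no circularity: the upper bound \eqref{lambda+} on $\lambda_\eps$ used above already has $\|v_\eps\|_2 \gtrsim 1$ baked in via the proof of Lemma \ref{lambda-main}, so invoking it here to produce a lower bound on $\|v_\eps\|_2^2$ is legitimate.
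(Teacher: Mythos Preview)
Your proof is correct and follows essentially the same approach as the paper: integrate the barrier from Lemma \ref{l-barrier} in spherical coordinates and observe that in $N=3$ the Jacobian exactly cancels the $|x|^{-2}$, leaving a pure exponential integral. The paper's version is terser and simply writes $\int_R^\infty c^2 e^{-2\sqrt{\eps S_\eps}\lambda_\eps r}\,dr \geq C/(\sqrt{\eps}\lambda_\eps)$ without justifying why the factor $e^{-2\sqrt{\eps S_\eps}\lambda_\eps R}$ stays bounded below; your explicit appeal to \eqref{lambda+} and the computation $\sqrt{\eps}\lambda_\eps \lesssim \eps^{(q-6)/(4(q-4))} \to 0$ fills exactly this gap, and your remark on non-circularity is apt.
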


\proof Assuming $N=3$ we directly calculate from Lemma
\ref{l-barrier}, \beq\|\v_\eps\|_2^2\ge \int_{\mathbb R^3
    \backslash B_R}|\v_\eps|^2\,dx\ge \int_R^\infty c^2
e^{-2\sqrt{\eps S_\eps}\lambda_\eps
  r}\,dr\ge\frac{C}{\sqrt{\eps}\lambda_\eps},\eeq which is what
  is required.  \qed

As an immediate corollary, using \eqref{lambda+lower}, we obtain an upper estimate of
$\lambda_\eps$ which matches the lower bound of \eqref{lambda-} in the
case $N=3$.

\begin{corollary}
  If $N=3$ then $\lambda_\eps ~\lesssim~ \eps^{-\frac{1}{q-4}}$.
\end{corollary}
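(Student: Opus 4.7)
The plan is to simply chain the lower bound $\|\v_\eps\|_2^2 \gtrsim (\sqrt{\eps}\lambda_\eps)^{-1}$ just proved for $N=3$ together with the previously established upper estimate \eqref{lambda+lower}, and solve the resulting algebraic inequality for $\lambda_\eps$. Rewriting the preceding lemma in the form $\|\v_\eps\|_2^{-1} \lesssim \eps^{1/4}\lambda_\eps^{1/2}$ places the ingredient in exactly the shape needed to feed \eqref{lambda+lower}, where $\|\v_\eps\|_2$ appears as a denominator.

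I would then substitute this bound into \eqref{lambda+lower} specialized to $N=3$, obtaining
$$
\lambda_\eps \;\lesssim\; \|\v_\eps\|_2^{-1}\,\eps^{-\frac{q-2}{4(q-4)}}
\;\lesssim\; \eps^{1/4}\,\lambda_\eps^{1/2}\,\eps^{-\frac{q-2}{4(q-4)}}.
$$
The exponent on $\eps$ collapses to $1/4 - (q-2)/(4(q-4)) = -1/(2(q-4))$, so after cancelling a factor $\lambda_\eps^{1/2}$ one reads off $\lambda_\eps^{1/2} \lesssim \eps^{-1/(2(q-4))}$, i.e. $\lambda_\eps \lesssim \eps^{-1/(q-4)}$, which is the claim.

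There is really no analytic obstacle in this step — everything is bookkeeping — because the delicate work has already been done, namely the construction of the pointwise lower barrier in Lemma \ref{l-barrier} and the integration of that barrier over $\R^3\setminus B_R$ in the preceding lemma. The conceptual point is that, in the low-dimensional case where $W_1\notin L^2(\R^N)$, the $L^2$ norm $\|\v_\eps\|_2$ must diverge as $\eps\to 0$, and that divergence is quantitatively strong enough to close the gap between the lower bound in \eqref{lambda-} and the naive upper bound in \eqref{lambda+}. Combined with \eqref{lambda-} for $N=3$, this corollary will therefore pin down the optimal rate $\lambda_\eps \sim \eps^{-1/(q-4)}$ predicted by Theorem \ref{th-main}.
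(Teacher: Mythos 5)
Your proof is correct and is exactly the argument the paper intends: substitute the just-proved lower bound $\|\v_\eps\|_2^{-1}\lesssim \eps^{1/4}\lambda_\eps^{1/2}$ into \eqref{lambda+lower} for $N=3$ and cancel a factor $\lambda_\eps^{1/2}$. The paper leaves this as an unproved ``immediate corollary,'' and your computation of the exponent $1/4 - \tfrac{q-2}{4(q-4)} = -\tfrac{1}{2(q-4)}$ is the bookkeeping it has in mind.
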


Next we consider the case $N=4$.

\begin{lemma}
  If $N=4$ then $\|\v_\eps\|_2^2 ~\gtrsim~
  \log\frac{1}{\sqrt{\eps}\lambda_\eps}$.
\end{lemma}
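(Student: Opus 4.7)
The plan is to mimic exactly the $N=3$ proof, applying the lower barrier of Lemma \ref{l-barrier} and integrating in spherical coordinates, but now tracking the logarithmic divergence that appears in four dimensions. Let $\alpha_\eps := \sqrt{\eps S_\eps}\,\lambda_\eps$. By Lemma \ref{l-barrier}, for $|x|>R$ we have $v_\eps(x) \ge c |x|^{-(N-2)} e^{-\alpha_\eps |x|} = c|x|^{-2} e^{-\alpha_\eps|x|}$, since $N=4$.

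Passing to radial coordinates (so that $dx = \omega_3 r^3\,dr$ on $\R^4$), the plan is to estimate
\[
\|v_\eps\|_2^2 \;\ge\; \int_{\R^4\setminus B_R} v_\eps^2\,dx \;\gtrsim\; \int_R^\infty r^{-4}\,e^{-2\alpha_\eps r}\,r^3\,dr \;=\; \int_R^\infty \frac{e^{-2\alpha_\eps r}}{r}\,dr,
\]
and show the right-hand side is $\gtrsim \log(1/\alpha_\eps)$. Before doing so I would first observe that $\alpha_\eps \to 0$ as $\eps\to 0$: indeed the upper bound \eqref{lambda+} for $N=4$ gives $\alpha_\eps \lesssim \eps^{(q-4)/(2q-4)}(\log 1/\eps)^{(q-4)/(2q-4)}$, which tends to zero because $q>4$. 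In particular $1/\alpha_\eps \gg R$ for all small $\eps$.

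For the remaining integral, the natural approach is to split at $r=1/(2\alpha_\eps)$. On the interval $[R,1/(2\alpha_\eps)]$ one has $e^{-2\alpha_\eps r}\ge e^{-1}$, so
\[
\int_R^\infty \frac{e^{-2\alpha_\eps r}}{r}\,dr \;\ge\; e^{-1}\!\int_R^{1/(2\alpha_\eps)} \frac{dr}{r} \;=\; e^{-1}\log\frac{1}{2\alpha_\eps R} \;\ge\; c\,\log\frac{1}{\alpha_\eps}
\]
for all sufficiently small $\eps$, with a constant $c>0$ independent of $\eps$. Equivalently this is the known asymptotic of the exponential integral $E_1(2\alpha_\eps R)\sim -\log(\alpha_\eps R)$ as $\alpha_\eps R\to 0$.

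Finally, since $S_\eps$ is bounded above and below, $\log(1/\alpha_\eps)=\log(1/(\sqrt{\eps}\,\lambda_\eps))+O(1)$, and the latter tends to $+\infty$ by the lower bound \eqref{lambda-} on $\lambda_\eps$, so the $O(1)$ term is absorbed. This yields $\|v_\eps\|_2^2 \gtrsim \log\frac{1}{\sqrt{\eps}\,\lambda_\eps}$, as claimed. There is no real obstacle here; the only point to verify carefully is that $\alpha_\eps\to 0$ so that splitting at $1/(2\alpha_\eps)$ is valid, which is exactly where the upper bound \eqref{lambda+} for $N=4$ is used.
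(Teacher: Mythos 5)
Your proof is correct and follows essentially the same route as the paper: both use the lower barrier of Lemma~\ref{l-barrier}, reduce the claim to estimating $\int_R^\infty r^{-1} e^{-2\sqrt{\eps S_\eps}\lambda_\eps r}\,dr$, and extract the logarithmic growth as $\sqrt{\eps S_\eps}\lambda_\eps \to 0$. The only difference is cosmetic: the paper invokes the small-argument asymptotic of the incomplete Gamma function $\Gamma(0,t)$, while you evaluate the same integral by an elementary split at $r=1/(2\alpha_\eps)$ and explicitly verify that $\alpha_\eps\to 0$ from \eqref{lambda+}, a detail the paper leaves implicit.
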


\proof Assuming $N=4$ we directly calculate using Lemma
\ref{l-barrier}, \beq\|\v_\eps\|_2^2\ge \int_{\mathbb R^4 \backslash
  B_R}|\v_\eps|^2\,dx\ge \int_R^\infty c^2 r^{-1}e^{-2\sqrt{\eps
    S_\eps}\lambda_\eps\, r}\,dr=c^2\Gamma(0, 2\sqrt{\eps
  S_\eps}\lambda_\eps R),\eeq where \beq\Gamma(0,t)=-\log(t)-\gamma +
O(t),\qquad t\searrow 0,\eeq is the incomplete Gamma function and
$\gamma \approx 0.5772$ is the Euler constant \cite{Abramowitz}.
Hence we obtain for sufficiently small $\eps$: \beq\|\v_\eps\|_2^2\ge
c^2(-\log(2\sqrt{\eps S_\eps}\lambda_\eps R)-\gamma)\ge
C\log\Big(\frac{1}{\sqrt{\eps}\lambda_\eps}\Big), \eeq which is what
is required.  \qed

\begin{corollary}
  If $N=4$ then $\lambda_\eps ~\lesssim~
  \Big(\eps\log\frac{1}{\eps}\Big)^{-\frac{1}{q-2}}.$
\end{corollary}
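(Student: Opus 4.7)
The plan is to combine the upper estimate for $\lambda_\eps\|v_\eps\|_2$ coming from \eqref{lambda+lower} with the lower estimate for $\|v_\eps\|_2^2$ just established, and then to bootstrap once using the crude upper bound \eqref{lambda+}. Since $N=4$ forces $p=p^{\ast}=4$ and $q>p$, we always have $q>4$, so all exponents that appear are of the right sign.

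Squaring \eqref{lambda+lower} gives
$$
\eps\,\lambda_\eps^{2}\,\|v_\eps\|_2^{2}\;\lesssim\;\bigl(\eps\log\tfrac{1}{\eps}\bigr)^{\frac{q-4}{q-2}},
$$
and the previous lemma yields $\|v_\eps\|_2^{2}\gtrsim\log\bigl(1/(\sqrt{\eps}\lambda_\eps)\bigr)$. Thus
$$
\eps\,\lambda_\eps^{2}\,\log\!\Bigl(\tfrac{1}{\sqrt{\eps}\lambda_\eps}\Bigr)\;\lesssim\;\bigl(\eps\log\tfrac{1}{\eps}\bigr)^{\frac{q-4}{q-2}}.\tag{$\star$}
$$

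The key is to show that $\log\bigl(1/(\sqrt{\eps}\lambda_\eps)\bigr)$ is bounded below by a constant multiple of $\log(1/\eps)$. From the already available upper bound \eqref{lambda+} for $N=4$ we have
$$
\sqrt{\eps}\,\lambda_\eps\;\lesssim\;\eps^{\frac{1}{2}-\frac{1}{q-2}}\bigl(\log\tfrac{1}{\eps}\bigr)^{\frac{q-4}{2q-4}}\;=\;\bigl(\eps\log\tfrac{1}{\eps}\bigr)^{\frac{q-4}{2(q-2)}},
$$
and since $q>4$ the exponent $(q-4)/(2(q-2))$ is strictly positive, so $\sqrt{\eps}\lambda_\eps\to 0$ polynomially in $\eps\log(1/\eps)$. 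Taking logarithms gives $\log\bigl(1/(\sqrt{\eps}\lambda_\eps)\bigr)\gtrsim\log(1/\eps)$ for $\eps$ small.

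Substituting this lower bound into $(\star)$ yields
$$
\eps\,\lambda_\eps^{2}\,\log\tfrac{1}{\eps}\;\lesssim\;\bigl(\eps\log\tfrac{1}{\eps}\bigr)^{\frac{q-4}{q-2}},
$$
i.e.\ $\lambda_\eps^{2}\lesssim(\eps\log(1/\eps))^{\frac{q-4}{q-2}-1}=(\eps\log(1/\eps))^{-\frac{2}{q-2}}$, which on taking square roots gives exactly $\lambda_\eps\lesssim(\eps\log(1/\eps))^{-1/(q-2)}$.

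The only delicate point is checking that $\sqrt{\eps}\lambda_\eps$ actually tends to zero, so that the logarithm in the lower bound of $\|v_\eps\|_2^2$ is positive and large; this is precisely where the previously proved crude upper bound \eqref{lambda+} and the condition $q>4$ (which is automatic for $N=4$) are needed. Once this is known, the rest is a one-line algebraic simplification of the exponents of $\eps$ and $\log(1/\eps)$.
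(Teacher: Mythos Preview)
Your proof is correct and follows essentially the same route as the paper: combine $\eps\lambda_\eps^{2}\|v_\eps\|_2^{2}\lesssim\sigma_\eps\lesssim(\eps\log\tfrac{1}{\eps})^{(q-4)/(q-2)}$ with the lower bound $\|v_\eps\|_2^{2}\gtrsim\log\bigl(1/(\sqrt{\eps}\lambda_\eps)\bigr)$, then use the crude upper estimate \eqref{lambda+} to replace $\log\bigl(1/(\sqrt{\eps}\lambda_\eps)\bigr)$ by $\log(1/\eps)$. The paper additionally cites the lower bound \eqref{lambda-} to obtain the two-sided relation $\log\bigl(1/(\sqrt{\eps}\lambda_\eps)\bigr)\sim\log(1/\eps)$, but as you correctly observe, only the one-sided inequality coming from \eqref{lambda+} is actually needed here.
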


\proof An immediate corollary of \eqref{important} and \eqref{sigma}
is the relation \beq C\eps\lambda_\eps^2\log\frac{1}{\sqrt{\eps}
  \lambda_\eps}\le\Big(\eps\log\frac{1}{\eps}\Big)^\frac{q-4}{q-2}.\eeq
Note that
$\eps^{\delta_1}\le\sqrt{\eps}\lambda_\eps\le\eps^{\delta_2}$ for some
$\delta_{1,2}\ge 0$ and $\eps$ small enough, which is a
consequence of \eqref{lambda+} and \eqref{lambda-}.  Therefore,
\beq\log\frac{1}{\sqrt{\eps}\lambda_\eps} ~\sim~
\log\frac{1}{\eps},\eeq and the conclusion follows.  \qed

\subsection{Further estimates.}

The results in the previous section could be used in a standard way to
improve upon some earlier estimates.

An immediate consequence of the sharp upper estimates of
$\lambda_\eps$ is the following.

\begin{corollary}\label{q-bound}
$\|\v_\eps\|_q=O(1)$.
\end{corollary}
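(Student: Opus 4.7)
The plan is to deduce the bound directly from the essential relation \eqref{important}, namely
\[
\|\v_\eps\|_q^q \;\lesssim\; \sigma_\eps\,\lambda_\eps^{\frac{2(q-p)}{p-2}},
\]
and then substitute the sharp two-sided bounds on $\lambda_\eps$ that have just been established, together with the upper bound on $\sigma_\eps$ from Lemma \ref{E-S}. The case $N\ge 5$ has already been recorded (it is the corollary just before \eqref{lambda+lower}), so only $N=3$ and $N=4$ require a dedicated argument, and in each of these the matching upper estimates on $\lambda_\eps$ have just been proved in the previous subsection.

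For $N=3$, we have $p=p^\ast=6$, so $\tfrac{2(q-p)}{p-2}=\tfrac{q-6}{2}$. Using $\sigma_\eps\lesssim \eps^{(q-6)/(2q-8)}$ and $\lambda_\eps\lesssim \eps^{-1/(q-4)}$ we obtain
\[
\|\v_\eps\|_q^q \;\lesssim\; \eps^{\frac{q-6}{2q-8}}\cdot \eps^{-\frac{q-6}{2(q-4)}} \;=\; 1.
\]
For $N=4$, we have $p=p^\ast=4$ and $\tfrac{2(q-p)}{p-2}=q-4$. Using $\sigma_\eps\lesssim (\eps\log\tfrac{1}{\eps})^{(q-4)/(q-2)}$ and $\lambda_\eps\lesssim (\eps\log\tfrac{1}{\eps})^{-1/(q-2)}$ we obtain
\[
\|\v_\eps\|_q^q \;\lesssim\; \bigl(\eps\log\tfrac{1}{\eps}\bigr)^{\frac{q-4}{q-2}} \cdot \bigl(\eps\log\tfrac{1}{\eps}\bigr)^{-\frac{q-4}{q-2}} \;=\; 1.
\]
In both cases the powers cancel exactly, which is precisely why the scaling choices \eqref{Scaling-3} and \eqref{Scaling-4} used in proving Lemma \ref{E-S} were optimal.

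There is no real obstacle here beyond bookkeeping: the statement is a consequence of the fact that the upper and lower bounds on $\lambda_\eps$ now match up to constants, so the inequality \eqref{important} becomes an equality in the sense of the $\sim$ relation, pinning $\|\v_\eps\|_q$ from above. The only thing to double-check is that the constants implicit in $\lesssim$ are uniform in $\eps$, which is immediate from Lemma \ref{E-S}, Lemma \ref{lambda-main}, and the two corollaries of the preceding subsection.
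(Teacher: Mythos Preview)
Your proof is correct and follows essentially the same approach as the paper, which simply states that the bound is ``an immediate consequence of the sharp upper estimates of $\lambda_\eps$''; your explicit computations for $N=3$ and $N=4$ spell out exactly what that sentence means. The cancellation of exponents you observe is precisely the point, and your remark that the $N\ge 5$ case was already handled in the earlier corollary is accurate.
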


The boundedness of the $L^q$ norm also allows to reverse
estimates of $\|\v_\eps\|_2$ via \eqref{important}.

\begin{corollary}\label{2-bound}
\beq\|\v_\eps\|_2^2=
\left\{
\begin{array}{ll}
O\big(1\big), & N\ge 5,\medskip\\
O\big(\log\frac{1}{\eps}\big),& N=4,\medskip\\
O\big(\eps^{-\frac{1}{2}\frac{q-6}{q-4}}\big),& N=3.\\
\end{array}
\right.
\eeq
\end{corollary}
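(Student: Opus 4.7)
The plan is to invert the relation \eqref{important} and read $\|\v_\eps\|_2^2$ as an expression in $\eps$, $\lambda_\eps$ and $\|\v_\eps\|_q^q$. Rearranging \eqref{important} and using $p=p^\ast$, so that $p-2 = \frac{4}{N-2}$ and $2 + \frac{2(q-p)}{p-2} = \frac{(q-2)(N-2)}{2}$, gives
\begin{equation}
\|\v_\eps\|_2^2 \;\lesssim\; \frac{\|\v_\eps\|_q^q}{\eps\,\lambda_\eps^{(q-2)(N-2)/2}}.
\end{equation}
Two inputs are now available: Corollary \ref{q-bound} supplies $\|\v_\eps\|_q^q = O(1)$, and the sharp lower bounds on $\lambda_\eps$ collected in \eqref{lambda-} (which, in the low-dimensional cases, have just been matched to upper bounds via the barrier argument) control the denominator.

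Then I would simply substitute the dimension-dependent lower bounds on $\lambda_\eps$ and simplify. For $N\ge 5$, $\lambda_\eps\gtrsim \eps^{-(p-2)/(2q-4)}$ yields $\lambda_\eps^{(q-2)(N-2)/2}\gtrsim \eps^{-1}$, hence $\|\v_\eps\|_2^2\lesssim \eps^{-1}\cdot\eps = O(1)$. For $N=4$, $\lambda_\eps\gtrsim (\eps\log\tfrac{1}{\eps})^{-1/(q-2)}$ together with $(q-2)(N-2)/2 = q-2$ gives $\lambda_\eps^{q-2}\gtrsim (\eps\log\tfrac{1}{\eps})^{-1}$, so $\|\v_\eps\|_2^2\lesssim \eps^{-1}\cdot \eps\log\tfrac{1}{\eps} = O(\log\tfrac{1}{\eps})$. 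For $N=3$, $\lambda_\eps\gtrsim \eps^{-1/(q-4)}$ and $(q-2)(N-2)/2 = (q-2)/2$ yield $\lambda_\eps^{(q-2)/2}\gtrsim \eps^{-(q-2)/(2(q-4))}$, and the arithmetic $\tfrac{q-2}{2(q-4)} - 1 = -\tfrac{1}{2}\cdot\tfrac{q-6}{q-4}$ produces $\|\v_\eps\|_2^2\lesssim \eps^{-\frac{1}{2}\frac{q-6}{q-4}}$.

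There is essentially no obstacle here beyond bookkeeping: all three bounds are algebraic consequences of \eqref{important}, the uniform $L^q$ control, and the matching lower bounds on $\lambda_\eps$ established in the previous subsection. The only thing to double-check is that the exponent arithmetic in the $N=3$ case reproduces exactly the exponent stated in the corollary, and that for $N\ge 5$ the $\eps$-powers cancel exactly (which they must, since \eqref{lambda-} was shown in that regime to be of matching order with \eqref{lambda+}).
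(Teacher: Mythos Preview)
Your proof is correct and matches the paper's intended argument exactly: the paper states that the corollary follows by reversing the estimates of $\|\v_\eps\|_2$ via \eqref{important} once the $L^q$ bound of Corollary~\ref{q-bound} is in hand, and your computation spells this out. One small expository point: the lower bounds \eqref{lambda-} on $\lambda_\eps$ were already available from Lemma~\ref{lambda-main} and Lemma~\ref{E-S} prior to the barrier argument, so your parenthetical remark is slightly off---the barrier argument supplies matching \emph{upper} bounds on $\lambda_\eps$ (needed for Corollary~\ref{q-bound}), not the lower bounds you actually use here.
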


We now prove that the $L^q$ bound also implies an $L^\infty$ bound.

\begin{lemma}\label{l-w}
$\|\v_\eps\|_\infty=O(1)$.
\end{lemma}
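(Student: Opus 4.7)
The plan is to bootstrap from the uniform $L^{p^*}$-bound on $\v_\eps$ supplied by \eqref{w-2p} to uniform $L^s$-bounds for arbitrarily large $s$ via a Brezis--Kato argument, and then to upgrade $L^s$ to $L^\infty$ by combining the radial decay estimate of Lemma \ref{l:radial} with standard interior elliptic regularity.

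First, since $\v_\eps\ge 0$ and the $\v_\eps^{q-1}$ term enters $(R^{\ast}_\eps)$ with a negative sign, discarding non-negative terms on the left yields the linear differential inequality
$$-\Delta \v_\eps \,\le\, a_\eps(x)\,\v_\eps, \qquad a_\eps(x):= S_\eps\,\v_\eps^{p-2}(x).$$
Because $p=p^*$, one has $(p-2)\cdot\tfrac{N}{2}=p$, and so $\|a_\eps\|_{N/2}^{N/2} \lesssim \|\v_\eps\|_p^p$ is uniformly bounded in $\eps$ by \eqref{w-2p}. More importantly, the $L^p$-convergence $\v_\eps\to W_1$ furnished by Lemma \ref{l-dist} makes the family $\{a_\eps\}$ equi-integrable in $L^{N/2}$, i.e.\ $\lim_{M\to\infty}\sup_\eps \|a_\eps\chi_{\{a_\eps>M\}}\|_{N/2} = 0$.

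Next I would run a standard Brezis--Kato iteration. For $\beta\ge 1$ and $L>0$, test against $\v_\eps\, v_{\eps,L}^{2(\beta-1)}$ with $v_{\eps,L}:=\min(\v_\eps,L)$; integration by parts combined with Sobolev's inequality applied to $\v_\eps\, v_{\eps,L}^{\beta-1}\in H^1$ yields a critical integral $\int a_\eps\, \v_\eps^{2}\,v_{\eps,L}^{2(\beta-1)}\,dx$ on the right-hand side. Splitting $a_\eps = a_\eps\chi_{\{a_\eps\le M\}}+a_\eps\chi_{\{a_\eps>M\}}$ and choosing $M$ so large that the $L^{N/2}$-norm of the tail falls below a threshold dictated by the Sobolev constant, the tail is absorbed back into the left, while the truncated part is controlled by $M\|\v_\eps\, v_{\eps,L}^{\beta-1}\|_2^2$. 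Sending $L\to\infty$ and iterating $\beta\mapsto \beta\cdot p^*/2$ then produces uniform bounds $\|\v_\eps\|_{L^s}\le C(s)$ for every finite $s$.

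Finally, to pass to $L^\infty$: since $w_\eps$ is radial non-increasing, so is $\v_\eps$, and hence $\|\v_\eps\|_\infty=\v_\eps(0)$. For $|x|\ge 1$, Lemma \ref{l:radial} gives $\v_\eps(x)\le C_{s,N}\|\v_\eps\|_s$, uniformly bounded by the previous step. On $B_2$, one has $-\Delta \v_\eps = f_\eps$ with $|f_\eps|\le S_\eps\,\v_\eps^{p-1}+o(1)\v_\eps^{q-1}+o(1)\v_\eps$ (the small prefactors coming from $\lambda_\eps^{-2(q-p)/(p-2)}\to 0$ and $\eps\lambda_\eps^2\to 0$, cf.\ \eqref{important} and Lemma \ref{lambda-main}), hence $f_\eps\in L^r(B_2)$ uniformly for some $r>N/2$ once $s$ is taken large enough; standard interior $L^\infty$ estimates for weak solutions of $-\Delta u=f$ then give $\sup_{B_1}\v_\eps=O(1)$. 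The main obstacle is exactly the criticality $p=p^*$, which prevents a naive one-step Moser iteration from closing: the Sobolev inequality and the coefficient $\|a_\eps\|_{N/2}$ scale identically, and absorbing the critical contribution requires the Brezis--Kato threshold splitting, whose uniformity in $\eps$ hinges crucially on the $L^p$-equi-integrability supplied by $\v_\eps\to W_1$.
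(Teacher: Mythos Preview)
Your proof is correct and takes a genuinely different route from the paper's. The paper does start from the same inequality $-\Delta \v_\eps \le S_\eps \v_\eps^{\,p-2}\,\v_\eps$, but instead of running a Brezis--Kato iteration it invokes Corollary~\ref{q-bound} (the uniform $L^q$-bound on $\v_\eps$, itself a consequence of the sharp two-sided estimates on $\lambda_\eps$ obtained via the lower barrier of Lemma~\ref{l-barrier}) together with the radial decay estimate~\eqref{P-2} to obtain a \emph{pointwise} bound $V_\eps(x)=S_\eps \v_\eps^{\,p-2}(x)\le C|x|^{-2p/q}$ with $C$ independent of $\eps$. Since $2p/q<2$, this potential lies in $L^r_{\mathrm{loc}}$ for some $r>N/2$ uniformly, and an elementary ODE-type argument (integrating the radial inequality twice) yields $\v_\eps(0)\le \v_\eps(x_0)+C''\v_\eps(0)\,|x_0|^{2(q-p)/q}$, which closes for $|x_0|$ small.

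The trade-off is this: your argument needs only Lemma~\ref{l-dist} (the $L^p$-convergence $\v_\eps\to W_1$, which gives equi-integrability of $\v_\eps^{\,p}$ and hence of $a_\eps^{N/2}$), so it sits logically earlier in the development and is independent of the barrier construction and the delicate sharp $\lambda_\eps$-estimates for $N=3,4$; it would also transplant unchanged to non-radial settings. The paper's argument, by contrast, is more elementary once Corollary~\ref{q-bound} is available, exploits the radial monotonicity directly, and avoids iterative bootstrapping. One minor remark: your final step citing $\eps\lambda_\eps^2\to 0$ and $\lambda_\eps^{-2(q-p)/(p-2)}\to 0$ is not strictly needed --- having established uniform $L^s$-bounds for every $s<\infty$, the coefficient $a_\eps=S_\eps \v_\eps^{\,p-2}$ itself lies uniformly in $L^r_{\mathrm{loc}}$ for some $r>N/2$, and the local boundedness of non-negative subsolutions applied to $-\Delta \v_\eps\le a_\eps \v_\eps$ already delivers the $L^\infty$-bound on $B_1$.
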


\proof Note that by $(R^\ast_\eps)$ the function $\v_\eps$ is a
positive solution of the linear inequality \beq\label{ineq} -\Delta
\v_\eps -V_\eps(x)\v_\eps\le 0,\qquad x\in\R^N, \eeq where \beq
V_\eps(x):=S_\eps \v_\eps^{p-2}(x).\eeq From the radial estimate
\eqref{P-2} we obtain \beq \label{vepsxpq} \v_\eps(x)\le
C_q\|\v_\eps\|_q|x|^{-\frac{N}{q}}.\eeq Hence, using Corollary
\ref{q-bound} we obtain \beq V_\eps(x)\le S_\eps
C_q^{p-2}\|\v_\eps\|_q^{p-2}|x|^{-\frac{N(p-2)}{q}}\le
C_\ast|x|^{-\frac{2 p}{q}}, \eeq for some constant $C_\ast>0$ which
does not depend on $\eps$ or $x$. As a consequence, $\v_\eps$ is a
positive solution of the linear inequality \beq\label{ineq-ast}
-\Delta \v_\eps -V_\ast(x)\v_\eps\le 0,\qquad x\in\R^N, \eeq where
$V_\ast(x)=C_\ast|x|^{-\frac{2p}{q}}\in L^s_{loc}(\R^N)$, for some
$s>N/2$.  The result can then be concluded by the weak Harnack
inequality for subsolutions of \eqref{ineq-ast} (cf. \cite[Remark 5.1
on p. 226]{Stampacchia}).
  Here we give an elementary proof that also works in the present
  context. Integrating the inequality in \eqref{ineq-ast} over a ball
  and applying divergence theorem, by monotonic decrease of
  $v_\eps(x)$ in $|x|$ we have
    \begin{align}
      \label{eq:gradveps}
      |\nabla v_\eps(x)| \leq {C \over |x|^{N-1}} \int_{B_{|x|}(0)}
      V_\ast(y) v_\eps(y) \, dy \leq C' v_\eps(0) |x|^{1 - {2 p \over
          q}},
    \end{align}
    for some $C, C' > 0$ independent of $\eps$ or $x$.  Integrating
    again along the straight line from $0$ to $x_0$, we obtain
    \begin{align}
      \label{eq:veps0}
      v_\eps(0) \leq v_\eps(x_0) + C'' v_\eps(0) |x_0|^{2(q-p) \over
        q},
    \end{align}
    for some $C'' > 0$ independent of $\eps$ or $x$. We then conclude
    by choosing $|x_0|$ sufficiently small independently of $\eps$,
    using \eqref{vepsxpq} and Corollary \ref{q-bound}. \qed

  A standard consequence of the $L^\infty$ bound and elliptic
  regularity theory is the following convergence statement.

  \begin{corollary}\label{cor-reg}
    $\v_\eps\to U_1$ in $C^{2}(\R^N)$ and $L^s(\R^N)$ for any $s\ge
    p$.
    In particular, \beq\label{amp} \v_\eps(0) ~\simeq~ W_1(0).  \eeq
\end{corollary}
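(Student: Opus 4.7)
The plan is to upgrade the uniform $L^\infty$ bound from Lemma \ref{l-w} to uniform $C^{2,\alpha}_{loc}$ estimates on $\v_\eps$ via elliptic regularity applied to equation $(R^\ast_\eps)$, to extract a subsequential $C^2_{loc}$-limit, to identify it using the strong convergence $\v_\eps \to W_1$ in $L^p(\R^N)$ already furnished by Lemma \ref{l-dist}, and finally to promote local convergence to global convergence using the uniform radial decay of $\v_\eps$ and $W_1$. The key preparatory observation is that, by \eqref{important} combined with Lemmas \ref{E-S} and \ref{lambda-main} and the lower bounds $\liminf_{\eps\to 0}\|\v_\eps\|_q>0$ and $\liminf_{\eps\to 0}\|\v_\eps\|_2>0$ established in the proof of Lemma \ref{lambda-main}, both auxiliary coefficients appearing in $(R^\ast_\eps)$ — namely $S_\eps\eps\lambda_\eps^2$ on the left and $\lambda_\eps^{-2(q-p)/(p-2)}$ on the right — are of order $O(\sigma_\eps) = o(1)$.

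Combined with the uniform $L^\infty$ bound of Lemma \ref{l-w}, this renders the right-hand side of $(R^\ast_\eps)$ uniformly bounded in $L^\infty(\R^N)$. Standard interior $W^{2,s}$ estimates for arbitrary $s<\infty$, followed by a Schauder bootstrap, then yield a uniform bound on $\|\v_\eps\|_{C^{2,\alpha}(B_R)}$ for every $R>0$ and some $\alpha\in(0,1)$ independent of $\eps$. By Arzelà--Ascoli and a diagonal argument, every sequence $\eps_n\to 0$ therefore admits a subsequence along which $\v_{\eps_n}\to v^\ast$ in $C^2_{loc}(\R^N)$; passing to the limit in $(R^\ast_\eps)$, the vanishing of the two auxiliary coefficients shows that $v^\ast$ is a classical nonnegative radial solution of $-\Delta v^\ast = S_\ast (v^\ast)^{p-1}$ on $\R^N$. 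On the other hand, Lemma \ref{l-dist} gives $\v_\eps\to W_1$ strongly in $L^p(\R^N)$, so upon passing to a further pointwise-a.e.-convergent subsequence $v^\ast = W_1$; uniqueness of the limit then promotes this to $\v_\eps\to W_1$ in $C^2_{loc}(\R^N)$ along the full family.

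To upgrade to global convergence, one exploits uniform decay at infinity. The radial estimate \eqref{P-2} with $s = q$, combined with the uniform $L^q$ bound of Corollary \ref{q-bound}, gives $\v_\eps(x)\lesssim |x|^{-N/q}$ for $x\ne 0$, uniformly in $\eps$, while the explicit form of $W_1$ yields the faster decay $W_1(x)\lesssim |x|^{-(N-2)}$. Given $\delta>0$, one may therefore choose $R$ independent of $\eps$ so that $\v_\eps+W_1 \le \delta$ on $\R^N\setminus B_R$; combined with the $C^2$ uniform convergence on $\overline{B_R}$ this gives $\|\v_\eps-W_1\|_\infty\to 0$, and a further round of elliptic regularity applied to the linearized equation satisfied by $\v_\eps-W_1$ on exterior annuli lifts this to convergence in $C^2(\R^N)$. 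The $L^s$ convergence for $s>p$ follows by the interpolation $\|\v_\eps-W_1\|_s \le \|\v_\eps-W_1\|_\infty^{(s-p)/s}\|\v_\eps-W_1\|_p^{p/s}$, the case $s=p$ being already contained in Lemma \ref{l-dist}. Finally, \eqref{amp} is immediate by evaluating the $C^0$ convergence at the origin. The single delicate step of the argument is the bookkeeping of smallness of the two auxiliary coefficients in $(R^\ast_\eps)$; once this is in hand, the rest of the proof is a routine elliptic compactness argument.
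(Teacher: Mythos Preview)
Your argument is correct and follows essentially the same approach as the paper: both combine the uniform $L^\infty$ bound of Lemma~\ref{l-w} with the $D^1/L^p$ convergence of Lemma~\ref{l-dist}, uniform radial decay (via Lemma~\ref{l:radial} and Corollary~\ref{q-bound}), and standard interior elliptic regularity (Calder\'on--Zygmund followed by Schauder, with constants uniform under translation) to upgrade to global $C^2$ and $L^s$ convergence. The only difference is cosmetic ordering --- the paper first deduces $L^s$-convergence for $s\ge p$ and then runs the elliptic estimates to obtain $C^2$ convergence directly, whereas you first extract a $C^2_{loc}$ limit via Arzel\`a--Ascoli, identify it through Lemma~\ref{l-dist}, and recover $L^s$-convergence at the end by the interpolation $\|v_\eps-W_1\|_s\le\|v_\eps-W_1\|_\infty^{(s-p)/s}\|v_\eps-W_1\|_p^{p/s}$; your explicit verification that the two auxiliary coefficients in $(R^\ast_\eps)$ are $O(\sigma_\eps)$ is a helpful clarification that the paper leaves implicit.
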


\proof Indeed, a consequence of the $L^\infty$ bound of Lemma
\ref{l-w} and convergence in $D^1(\R^N)$ via compactness result
  for monotone radial functions in Lemma \ref{l:radial} is
convergence in $L^s(\R^N)$ for any $s\ge p$. Then Calder\'on--Zygmund
estimate \cite[Theorem 9.11]{Gilbarg} implies
convergence in $W^{2,s}_{loc}(\R^N)$ and, hence, by Sobolev
embedding also in
$C^{1,\alpha}_{loc}(\R^N)$. Since the nonlinearity in $(R_\eps^\ast)$
is smooth, using Schauder's estimates \cite[Theorem 6.2, 6.6]{Gilbarg} we
conclude convergence in $C^2_{loc}(\R^N)$.  Finally, taking into
account that the constants in Schauder estimates are uniform with
respect to translations, we deduce convergence in $C^2(\R^N)$.
\qed

Taking into account that \beq u_\eps(0) ~\sim~
\lambda_\eps^{-\frac{2}{p-2}}\v_\eps(0),\eeq we can use \eqref{amp} to
estimate the amplitude of $u_\eps(0)$ to derive \eqref{lambda-u0},
which completes the proof of Theorem \ref{th-main}.

\section{Supercritical case $p>p^\ast$.}\label{S-super}
\subsection{The limit equation.}

For $p>p^\ast$ the limit equation
$$
-\Delta u -|u|^{p-2}u+|u|^{q-2}u=0\quad\text{in $\R^N$},\leqno{(P_0)}
$$
admits a unique positive radial ground state solution $u_0\in
D^1(\R^N)$.  Further, it is known that $u_0\in C^2(\R^N)$, $u_0(x)$ is
monotone decreasing function of $|x|$, and there exists $C_0>0$ such
that \beq\label{Green-super} \lim_{|x|\to\infty}|x|^{N-2}u_0(x)=C_0>0,
\eeq see \cite[Theorem 4]{BL-I} for the existence, or
\cite{Merle-I,Merle-II} for the existence and asymptotic decay, and
\cite{Merle-II,Kwong-plus} for the uniqueness proofs.

Similarly to \eqref{var-rescale}, the ground state $u_0$ admits a
variational characterization in the Sobolev space $D^1(\R^N)$ via the
rescaling \beq\label{var-rescale-super}
u_0(x):=w_0\Big(\frac{x}{\sqrt{S_0}}\Big), \eeq where $w_0 >0$ is
the radial (i.e., depending only on $|x|$) minimizer of the
constrained minimization problem
$$
S_0:=\inf\left\{\int_{\R^N}|\nabla w|^2\,dx\Big|\; w\in
  D^1(\R^N),\;p^\ast\!\int_{\R^N}F_0(w)\,dx\;=1\right\},
\leqno{(S_0)}
$$
where $F_0$ is defined by \eqref{f-eps} (see \cite[Section 5]{BL-I}).
Similarly to \eqref{Euler-var}--\eqref{theta}, one concludes that the
minimizer $w_0$ solves the Euler--Lagrange equation \beq \label{w0S0}
-\Delta w_0=S_0\big(|w_0|^{p-2} w_0-|w_0|^{q-2} w_0 \big)\quad\text{in
  $\R^N$}. \eeq
Further, $w_0$ satisfies Nehari's identity \beq\label{Nehari-var-0}
\int_{\R^N}|\nabla
w_0|^2\,dx=S_0\int\big(|w_0|^{p}-|w_0|^{q}\big)\,dx, \eeq and
Pokhozhaev's identity (see e.g. \cite[Proposition 1]{BL-I})
\beq\label{Pokhozhaev-var-0} \int_{\R^N}|\nabla w_0|^2\,dx=S_0
p^\ast\int\left(\frac{|w_0|^{p}}{p}-\frac{|w_0|^{q}}{q}\right)\,dx.
\eeq Taking into account that $\|\nabla w_0\|_2^2=S_0$, we then derive
from Nehari and Pokho\-zhaev's identities the relation
\beq\label{Pokhozhaev-Nehari-var}
\|w_0\|_p^p-\|w_0\|_q^q=\frac{p^\ast}{p}\|w_0\|_p^p-\frac{p^\ast}{q}\|w_0\|_q^q=1,
\eeq which leads to the explicit expressions
\beq\label{pq-Pokhozhaev} \|w_0\|_p^p=\frac{(q-p^\ast)p}{(q-p)
  p^\ast},\qquad \|w_0\|_q^q=\frac{( p-p^\ast)q}{(q-p) p^\ast}.  \eeq

  \begin{remark}
    \label{r:nonexsup} Note that the arguments leading to
    \eqref{pq-Pokhozhaev} also give non-existence of non-trivial weak
    solutions $u \in D^1(\mathbb R^N) \cap L^p(\mathbb R^N) \cap
    L^q(\mathbb R^N)$ of problem $(P_0)$ in the case $2 < p \leq
    p^\ast$ and $q>p$.
  \end{remark}

\subsection{Energy and norms estimates.}

To control the relations between $S_\eps$ and $S_0$ it is convenient
to consider the equivalent to $(S_0)$ scaling invariant quotient
\beq \label{S0} \S_0(w):=\frac{\int_{\R^N}|\nabla w|^2\,dx}
{\Big(p^\ast\int_{\R^N} F_0(w)\,dx\Big)^\frac{N-2}{N}},\qquad w\in
D^1(\R^N),\quad \int_{\R^N}F_0(w)\,dx>0.\eeq Then \beq \label{S0inf}
S_0=\mathop{\inf_{w\in D^1(\R^N)}}_{F_0(w)>0}\S_0(w).\eeq

\begin{lemma}\label{L-super-2}
  $0<S_\eps-S_0\to 0$ as $\eps\to 0$.
\end{lemma}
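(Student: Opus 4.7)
I would decompose the claim into the strict inequality $S_\eps > S_0$ and the upper bound $\limsup_{\eps \to 0} S_\eps \le S_0$, handling each by an admissible-test-function comparison of the dilation-invariant quotients $\S_\eps$ and $\S_0$.

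For the lower bound I would first record the pointwise inequality $F_\eps(u) \le F_0(u)$ for $u \ge 0$, with strict inequality when $u > 0$. From \eqref{f-eps} one checks directly that $F_0(u) - F_\eps(u) = \tfrac{\eps}{2} u^2$ on $[0,1]$ and $F_0(u) - F_\eps(u) = \eps(u - \tfrac{1}{2})$ on $(1,\infty)$. Applied to the nontrivial minimizer $w_\eps$ of $(S_\eps)$, normalized by $p^\ast \int_{\R^N} F_\eps(w_\eps)\,dx = 1$, this yields $p^\ast \int_{\R^N} F_0(w_\eps)\,dx > 1$, whence $S_0 \le \S_0(w_\eps) < \S_\eps(w_\eps) = S_\eps$.

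For the upper bound I would use $w_0$, possibly cut off, as a test function for $(S_\eps)$. After replacing $w_0$ by $\min(w_0, 1)$ if necessary, which leaves $\int_{\R^N} F_0(w_0)\,dx$ unchanged ($F_0$ being constant on $[1,\infty)$) without increasing $\|\nabla w_0\|_2^2$, I may assume $0 \le w_0 \le 1$. The asymptotic decay $w_0(x) \sim c|x|^{-(N-2)}$, inherited from \eqref{Green-super} via the rescaling \eqref{var-rescale-super}, then splits the argument by dimension. When $N \ge 5$, $w_0 \in L^2(\R^N)$ and $w_0$ is itself admissible for $(S_\eps)$ once $\eps \|w_0\|_2^2$ is small, with
\[
\S_\eps(w_0) = S_0\,\bigl(1 - \tfrac{p^\ast \eps}{2}\, \|w_0\|_2^2 \bigr)^{-(N-2)/N} = S_0 + O(\eps).
\]
When $N \in \{3, 4\}$, $w_0 \notin L^2(\R^N)$, so I would instead test with $\eta_R w_0$, employing the same cut-off family introduced before \eqref{UR-D2}. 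Arguments parallel to \eqref{UR-D2}--\eqref{UR-2} yield $\|\nabla (\eta_R w_0)\|_2^2 = S_0 + O(R^{-(N-2)})$ and $\int_{\R^N} F_0(\eta_R w_0)\,dx \to \int_{\R^N} F_0(w_0)\,dx$ as $R \to \infty$ (the tail being $O(R^{N - p(N-2)})$, which is finite since $p > p^\ast$), while $\|\eta_R w_0\|_2^2$ grows like $\log R$ for $N = 4$ and like $R$ for $N = 3$. Taking, say, $R = \eps^{-1/2}$ ensures $\eps \|\eta_R w_0\|_2^2 = o(1)$, so that $\S_\eps(\eta_R w_0) = S_0 + o(1)$.

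The main obstacle lies in the low-dimensional case $N \in \{3,4\}$, where the non-integrability of $w_0$ forces a careful choice of cut-off radius $R = R(\eps)$ balancing the cut-off error against the $\eps$-penalty; since we only need $S_\eps - S_0 \to 0$ and not a sharp rate here, the crude choice $R = \eps^{-1/2}$ suffices.
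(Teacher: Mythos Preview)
Your proof is correct and follows essentially the same approach as the paper: the lower bound via $S_0 \le \S_0(w_\eps) < \S_\eps(w_\eps) = S_\eps$, and the upper bound by testing $\S_\eps$ with $w_0$ (directly for $N \ge 5$, cut off by $\eta_R$ for $N=3,4$). The only cosmetic differences are that the paper chooses $R=\eps^{-1}$ for $N=4$ to record a sharper rate, and that your truncation step $w_0 \mapsto \min(w_0,1)$ is unnecessary since the truncated definition \eqref{f-eps} already forces $w_0 \le 1$.
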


\proof
To show that $S_0<S_\eps$ simply note that
\beq S_0\le\S_0(w_\eps)<\S_\eps(w_\eps)=S_\eps.\eeq
To control $S_\eps$ from above we will use the minimizer $w_0$ as a
test function for $(S_\eps)$.  In view of \eqref{Green-super}, we have
$w_0\in L^2(\R^N)$ if and only if $N\ge 5$.  Therefore we shall
consider the higher and lower dimensions separately.

{\sc Case $N\ge 5$.}  Testing $(S_\eps)$ against $w_0$, we obtain
\beq\label{S-N-super}
S_\eps\le\S_\eps(w_0)\le\frac{S_0}{\left(1-\eps\|w_0\|^2_{L^2(\R^N)}\right)^\frac{N-2}{N}}\le
S_0+O(\eps), \eeq which proves the claim for $N\ge 5$.  \smallskip

To consider the lower dimensions, given $R>1$ we introduce a cutoff
function $\eta_R\in C^\infty_c(\R)$ such that $\eta_R(r)=1$ for
$|r|<R$, $0<\eta(r)<1$ for $R<|r|<2R$, $\eta_R(r)=0$ for $|r|>2
R$ and $|\eta^\prime(r)|\le 2/R$.  Then taking into account
\eqref{Green-super}, for $s>\frac{N}{N-2}$ we compute
\beq\label{UR-D2-super} \int_{\R^N} |\nabla(\eta_R
w_0)|^2=S_0+O\big(R^{-(N-2)}\big), \eeq \beq\label{UR-s-super}
\int_{\R^N} |\eta_R
w_0|^s\,dx=\|w_0\|_{L^s(\R^N)}^s\Big(1-O\big(R^{N-s(N-2)}\big)\Big),
\eeq \beq\label{UR-2-super} \int_{\R^N} |\eta_R w_0|^2= \left\{
\begin{array}{ll}
O(\log(R)), & N=4,\bigskip\\
O(R), & N=3.
\end{array}
\right.
\eeq

{\sc Case $N=4$.}  Let $R=\eps^{-1}$.  Testing $(S_\eps)$ against
$\eta_R w_0$ and using the fact that $p > 4$, we obtain
\beqa\label{S-N-super-4} S_\eps\le\S_\eps(w_0)&\le&\frac{S_0+
  O\big(R^{-2}\big)}{\Big(1-O\big(R^{-4}\big)-\eps
  O\big(\log R\big)\Big)^{1/2}}\\ \nonumber
&\le&\frac{S_0+O\big(\eps^2\big)}{\Big(1-O\big(\eps^4\big)
  -O\big(\eps\log\frac{1}{\eps}\big)\Big)^\frac{1}{2}}\le
S_0+O\big(\eps\log\frac{1}{\eps}\big), \eeqa which proves the claim.

{\sc Case $N=3$.}  Let $R=\eps^{-1/2}$.  Testing $(S_\eps)$ against
$\eta_R w_0$ and using the fact that $p > 6$, we obtain
\beqa\label{S-N-super-3}
S_\eps\le\S_\eps(w_0)&\le&\frac{S_0+O\big(R^{-1}\big)}{\Big(
  1-O\big(R^{-6}\big)-\eps O\big(R\big)\Big)^{1/3}}\\
\nonumber
&\le&\frac{S_0+O\big(\eps^{1/2}\big)}{\Big(1-O\big(\eps^{3/2}
  \big)-O\big(\eps^{1/2}\big)\Big)^{1/3}}\le
S_0+O\big(\eps^{1/2}\big), \eeqa
which completes the proof.  \qed

\begin{lemma}\label{eps-s-super}
$\| w_\eps \|_{\infty} \leq 1$ and $\|w_\eps\|_s ~\lesssim~
    1$ for all $s > p^\ast$.
\end{lemma}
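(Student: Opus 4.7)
The plan is to deduce both bounds from already established facts: the rescaling relation \eqref{var-rescale} that connects $w_\eps$ to the original ground state $u_\eps$, the universal pointwise bound \eqref{bound}, and the Sobolev embedding applied to $\|\nabla w_\eps\|_2^2 = S_\eps$, which is controlled by Lemma \ref{L-super-2}.

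First I would establish the $L^\infty$ estimate. Recall that $u_\eps(x) = w_\eps(x/\sqrt{S_\eps})$, so $w_\eps(0) = u_\eps(0)$. The bound \eqref{bound} (which follows from strict convexity of the energy density of $\E_\eps$ for $u \geq 1$ together with the radial monotonicity of $u_\eps$) gives $u_\eps(0) \leq 1$. Since $w_\eps$ is radial and monotonically decreasing in $|x|$ (the minimizer of $(S_\eps)$ is radial non-increasing, and in fact its rescaling $u_\eps$ is strictly decreasing by \cite{GNN}), this yields $\|w_\eps\|_\infty = w_\eps(0) \leq 1$.

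Next I would obtain a uniform $L^{p^\ast}$ bound. From \eqref{Pokhozhaev-var} and \eqref{theta}, $\|\nabla w_\eps\|_2^2 = S_\eps$, and by Lemma \ref{L-super-2} we have $S_\eps \to S_0$, so $\|\nabla w_\eps\|_2 \lesssim 1$. The Sobolev inequality $\|w_\eps\|_{p^\ast} \leq S_\ast^{-1/2} \|\nabla w_\eps\|_2$ then gives $\|w_\eps\|_{p^\ast} \lesssim 1$.

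Finally, for any $s > p^\ast$ I would interpolate the $L^s$ norm between $L^{p^\ast}$ and $L^\infty$: writing
\begin{equation*}
\|w_\eps\|_s^s = \int_{\R^N} w_\eps^{p^\ast} w_\eps^{s-p^\ast}\,dx \leq \|w_\eps\|_\infty^{s-p^\ast} \|w_\eps\|_{p^\ast}^{p^\ast},
\end{equation*}
we immediately obtain $\|w_\eps\|_s \lesssim 1$ from the two bounds above. No step here looks delicate; the only subtlety is making sure the $L^\infty$ bound from \eqref{bound} survives the rescaling, which it does because the bound is scale-invariant as a pointwise statement about the amplitude at the origin.
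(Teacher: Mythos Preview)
Your proof is correct and follows essentially the same route as the paper: the $L^\infty$ bound comes from \eqref{bound} via the rescaling \eqref{var-rescale}, the $L^{p^\ast}$ bound from Sobolev together with $\|\nabla w_\eps\|_2^2 = S_\eps \to S_0$ (Lemma \ref{L-super-2}), and the $L^s$ bound for $s>p^\ast$ by interpolation between these two. The paper merely compresses the last step to $\|w_\eps\|_s^s \le \|w_\eps\|_{p^\ast}^{p^\ast}$, which is exactly your inequality with $\|w_\eps\|_\infty \le 1$ already substituted.
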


\begin{proof}
   In view of \eqref{bound} and \eqref{var-rescale} we have
    \beq\|w_\eps\|_\infty=\|u_\eps\|_\infty \le 1.\eeq Using Sobolev's
    inequality and Lemma \ref{L-super-2} we also obtain
    \beq\|w_\eps\|_{p^\ast}^2\le S_\ast^{-1}\|\nabla
    w_\eps\|_2^2=S_\ast^{-1}S_\eps=S_\ast^{-1}S_0\big(1+o(1)\big).\eeq
    Then for every $s>p^\ast$
    \beq\|w_\eps\|_s^s\le\|w_\eps\|_{p^\ast}^{p^\ast},\eeq so the
    assertion follows.
\end{proof}

\begin{lemma}\label{eps-2-super}
$\eps\|w_\eps\|_2^2\to 0$.
\end{lemma}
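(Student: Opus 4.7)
The plan is to mimic the strategy of Lemma~\ref{eps-2-crit} by testing the scale-invariant quotient $\S_0$ from \eqref{S0} against the minimizer $w_\eps$ of $(S_\eps)$. First, since $0\le w_\eps\le 1$ by Lemma~\ref{eps-s-super}, the definition \eqref{f-eps} yields the pointwise identity $F_\eps(w_\eps)=F_0(w_\eps)-\tfrac{\eps}{2}w_\eps^2$. Integrating and using the normalization $p^\ast\int F_\eps(w_\eps)\,dx=1$ built into $(S_\eps)$, I would obtain
\begin{equation*}
p^\ast\!\int_{\R^N} F_0(w_\eps)\,dx \;=\; 1+\tfrac{p^\ast\eps}{2}\|w_\eps\|_2^2 \;>\; 0,
\end{equation*}
so that $w_\eps$ is admissible in the scale-invariant formulation \eqref{S0inf} of $(S_0)$.

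Combining this with $\|\nabla w_\eps\|_2^2=S_\eps$ (which is \eqref{Pokhozhaev-var} together with \eqref{theta}), I would then deduce
\begin{equation*}
S_0 \;\le\; \S_0(w_\eps) \;=\; \frac{S_\eps}{\bigl(1+\tfrac{p^\ast\eps}{2}\|w_\eps\|_2^2\bigr)^{(N-2)/N}}.
\end{equation*}
Rearranging and raising both sides to the power $N/(N-2)$ produces
\begin{equation*}
\tfrac{p^\ast\eps}{2}\|w_\eps\|_2^2 \;\le\; (S_\eps/S_0)^{N/(N-2)}-1 \;\lesssim\; S_\eps-S_0,
\end{equation*}
where the final inequality uses that $S_\eps$ remains bounded by Lemma~\ref{L-super-2}. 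Since $S_\eps\to S_0$ as $\eps\to 0$ by the same lemma, the right-hand side vanishes, giving $\eps\|w_\eps\|_2^2\to 0$.

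There is no substantial obstacle here; the argument is the direct supercritical analog of Lemma~\ref{eps-2-crit}, with the pure Sobolev quotient $\S_\ast$ replaced by the energy quotient $\S_0$ associated with the limit problem $(P_0)$, which now actually attains its infimum at the nontrivial ground state $w_0$. In particular, unlike in the critical case, one does not need to pass through Nehari-type identities such as those in Lemma~\ref{L-pq}: the constraint $p^\ast\int F_\eps(w_\eps)\,dx=1$ combined with the uniform bound $w_\eps\le 1$ converts directly into an admissibility condition for the limit variational problem, and the gap $S_\eps-S_0$ already controls $\eps\|w_\eps\|_2^2$ without any rate issues specific to low dimensions.
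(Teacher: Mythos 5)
Your proof is correct and follows essentially the same route as the paper's: both rely on the identity $p^\ast\int F_0(w_\eps)\,dx = 1 + \tfrac{p^\ast}{2}\eps\|w_\eps\|_2^2$ (valid since $0\le w_\eps\le 1$), testing the scale-invariant quotient $\S_0$ against $w_\eps$, and invoking $S_\eps\to S_0$ from Lemma~\ref{L-super-2}. The only difference is cosmetic: you rearrange directly to bound $\eps\|w_\eps\|_2^2$ by $S_\eps - S_0$ (in the spirit of Lemma~\ref{eps-2-crit}), whereas the paper phrases the same estimate as a proof by contradiction.
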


\begin{proof}
  Since $w_\eps$ is a minimizer of $(S_\eps)$, we have
  \beq\label{eps-0} 1=p^\ast\int_{\R^N}
  F_\eps(w_\eps)dx=p^\ast\int_{\R^N}
  F_0(w_\eps)dx-p^\ast\frac{\eps}{2}\|w_\eps\|_2^2.  \eeq Therefore
  \beq \S_0(w_\eps)=\frac{\|\nabla
    w_\eps\|_2^2}{\Big(p^\ast\int_{\R^N}
    F_0(w)\,dx\Big)^\frac{N-2}{N}}=
  \frac{S_\eps}{\Big(1+\frac{p^\ast}{2}\eps\|w_\eps\|_2^2\Big)^\frac{N-2}{N}}.
  \eeq Assume to the contrary of the statement of the Lemma that
  $\limsup_{\eps\to 0}\eps\|w_\eps\|_2^2=m>0$. Then by Lemma
    \ref{L-super-2} for any sequence $\eps_n\to 0$ we obtain
  \beq
  S_0\le\S_0(w_{\eps_n})=\frac{S_{\eps_n}}{\Big(1+\frac{p^\ast}{2}\eps_{n}
    \|w_{\eps_n}\|_2^2\big)\Big)^\frac{N-2}{N}}\le
  \frac{S_0\big(1+o(1)\big)}{1+\frac{p^\ast}{2}m}<S_0, \eeq a
  contradiction.
\end{proof}

\subsection{Proof of Theorem \ref{th-super}.}

Consider a sequence of $\eps_n\to 0$.  Since $\|\nabla
w_{\eps_n}\|_2^2=S_{\eps_n}\to S_0$, the sequence $(\eps_n)$ contains
a subsequence, still denoted $(\eps_n)$, such that \beq\text{
  $w_{\eps_n}\rightharpoonup \bar{w}$ in $D^1(\R^N)$ and
  $w_{\eps_n}\rightarrow \bar{w}$ a.e. in $\R^N$,} \eeq where
$\bar{w}\in D^1(\R^N)$ is a radial function.  By Lemma
\ref{eps-s-super}, the sequence $(w_{\eps_n})$ is bounded in
$L^{p^\ast}(\R^N)$ and $L^\infty(\R^N)$.  Using Lemma \ref{l:radial}
and Sobolev inequality, we also obtain a uniform bound \beq
w_\eps(x)\le C|x|^{-\frac{N-2}{2}}\|\nabla w_\eps\|_2 ~\leq 2
C|x|^{-\frac{N-2}{2}}S_0,\eeq for $\eps$ sufficiently small.  Using
Lemma \ref{l:radial} we conclude that \beq\text{$w_{\eps_n}\to
  \bar{w}$ in $L^s(\R^N)$ for any $s\in(p^\ast,\infty)$.}\eeq Taking
into account Lemma \ref{eps-2-super} and \eqref{eps-0} we also obtain
\beq\int_{\R^N}F_0(\bar{w})dx=\lim_{n \to \infty}
\int_{\R^N}F_0(w_{\eps_n})dx= \lim_{n \to
  \infty}\Big(1+p^\ast\frac{\eps_n}{2} \|w_{\eps_n} \|_2^2\Big)=1
.\eeq By the weak lower semicontinuity we also conclude that
\beq\|\nabla\bar{w}\|_2^2\le\liminf_{n \to \infty} \|\nabla
w_{\eps_n}\|_2^2=S_0,\eeq that is $\bar{w}$ is a minimizer for
$(S_0)$.  By the uniqueness of the radial minimizer of $(S_0)$ we
conclude that $\bar{w}=w_0$.

We now claim that $(w_{\eps_n})$ converges strongly to $w_0$ in
$D^1(\R^N)$.  Indeed, we have \beqa
\|\nabla(w_{\eps_n}-w_0)\|_2^2&=&\|\nabla
w_{\eps_n}\|_2^2+\|\nabla w_0\|_2^2- 2\int_{\R^N}\nabla
w_{\eps_n}\cdot\nabla w_0\,dx \\ \nonumber
&=&S_{\eps_n}+S_0-2\int_{\R^N}\nabla w_{\eps_n}\cdot\nabla w_0\,dx.
\eeqa Estimating the last term and taking into account
\eqref{w0S0}, \eqref{Pokhozhaev-Nehari-var} and the fact that by
  H\"older inequality \beqa \Big|\int_{\R^N} f_0(w_0)(w_\eps-w_0)
dx\Big|\le\|f_0(w_0)\|_{p \over p - 1} \|w_\eps-w_0\|_p \\ \le
C\|w_0\|_p^{p-1} \|w_\eps-w_0\|_p\to 0, \nonumber \eeqa we obtain
\begin{align}
  \int_{\R^N}\nabla w_{\eps_n}\cdot\nabla w_0\,dx&=S_0\int_{\R^N}
  f_0(w_0)w_{\eps_n}\,dx \\ &=S_0\int_{\R^N}
  f_0(w_0)w_0\,dx+S_0\int_{\R^N} f_0(w_0) (w_{\eps_n}-w_0) dx \nonumber \\
  \nonumber &=S_0\big(1+o(1)\big),
\end{align}
which proves the claim.

Since $(w_{\eps_n})$ converges to $w_0$ in $D^1(\R^N)$ and in
$L^s(\R^N)$ for any $s\ge p^\ast$, similarly to the proof of Corollary
\ref{cor-reg} by the standard elliptic regularity we conclude that
$(w_{\eps_n})$ converges to $w_0$ in $C^2(\R^N)$.  The proof of of
Theorem \ref{th-super} is then completed by taking into account the
uniqueness of $w_0$.

\subsection{Remarks on a slightly supercritical limit
  problem.}\label{S-bonus}

Here we discuss the asymptotic behavior as $p\downarrow p^\ast$ of the
minimizer $w_0$ of the limit variational problem $(S_0)$.  For
convenience, set $\delta:=p-p^\ast>0$. To highlight the dependance on
$\delta$, in this section we denote the ground state energy in
\eqref{S0inf} by $S_0^\delta$, while $w_0^\delta$ will be used to
denote the corresponding minimizer. Also, in this section the
  asymptotic notation such as $\lesssim$, etc., is in terms of $\delta
  \to 0$.

The following summarizes our results regarding the asymptotic behavior
of $w_0^\delta$ as $\delta\downarrow 0$.

\begin{proposition}\label{p-bonus}
  $0<S_0^\delta-S_\ast\to 0$ for $\delta\downarrow 0$. In addition, it
  holds \beq\label{serrin-super+} \delta^{\frac{1}{q-p^\ast}} \lesssim
  w_0^\delta(0) \lesssim \delta^{\frac{1}{q + N}} , \eeq and, provided
  that $q>{N (N + 2) \over 2 (N - 2)}$,
  \beq\label{serrin-super-upper+} w_0^\delta(0) \sim
  \delta^\frac{1}{q-p^\ast}. \eeq
\end{proposition}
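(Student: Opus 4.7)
The plan is to adapt the variational/rescaling machinery of Sections~\ref{S-main} and~\ref{S-super} to the one-parameter family indexed by $\delta := p - p^\ast > 0$, treating $\delta$ as the small parameter in place of $\eps$. Throughout set $w := w_0^\delta$ and $M := w(0)$.

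\emph{Energy asymptotics.} The pointwise calculus inequality $\frac{p^\ast}{p}c^p - \frac{p^\ast}{q}c^q < c^{p^\ast}$ for every $c > 0$ (verified by showing $h(c) := c^{p^\ast} - \frac{p^\ast}{p}c^p + \frac{p^\ast}{q}c^q$ has $h(0)=0$ and $h'(c)>0$) integrates to $p^\ast \int F_0(w) < \|w\|_{p^\ast}^{p^\ast}$, whence $\S_\ast(w) < \S_0(w) = S_0^\delta$ and therefore $S_0^\delta > S_\ast$. For $S_0^\delta \to S_\ast$, I mimic Lemma~\ref{E-S} by testing $\S_0$ against the Sobolev minimizer $W_\lambda$: a direct computation gives $\S_0(W_\lambda) = S_\ast/G(\lambda)^{(N-2)/N}$, where $G(\lambda) := \frac{p^\ast}{p}\|W_1\|_p^p\lambda^{-(N-2)\delta/2} - \frac{p^\ast}{q}\|W_1\|_q^q\lambda^{-(N-2)(q-p^\ast)/2}$, maximized at a unique $\lambda_\ast \sim \delta^{-2/((N-2)(q-p^\ast))}$. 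A routine expansion of $G(\lambda_\ast)$ about its limit value $1$ yields $S_0^\delta \le S_\ast(1 + O(\delta\log(1/\delta)))$.

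\emph{Pointwise bounds on $M$.} For the lower bound, the explicit Pokhozhaev identities~\eqref{pq-Pokhozhaev} give $\|w\|_q^q = \frac{\delta q}{(q-p^\ast-\delta)p^\ast} \simeq \frac{q}{(q-p^\ast)p^\ast}\delta$, while $\|\nabla w\|_2^2 = S_0^\delta = O(1)$ and Sobolev's inequality give $\|w\|_{p^\ast} = O(1)$. Radial monotonicity yields $w^{q-p^\ast} \le M^{q-p^\ast}$, which integrates to $\|w\|_q^q \le M^{q-p^\ast}\|w\|_{p^\ast}^{p^\ast}$, so $M \gtrsim \delta^{1/(q-p^\ast)}$. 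For the upper bound $M \lesssim \delta^{1/(q+N)}$, the convexity argument behind~\eqref{bound} still yields $M \le 1$; uniform $C^{1,\alpha}$-regularity (the nonlinearity $f_0$ is bounded by a $\delta$-independent constant once $w \le 1$) gives $\|\nabla w\|_\infty \le C$, hence $M - w(r) \le Cr$, so $w(r) \ge M/2$ on $B_{M/(2C)}$. Substituting into $\|w\|_q^q \ge |B_{M/(2C)}|(M/2)^q$ produces $M^{N+q} \lesssim \delta$, i.e.\ the desired $M \lesssim \delta^{1/(N+q)}$.

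\emph{Sharp asymptotic under $q > N(N+2)/(2(N-2))$.} I transplant the rescaling and concentration--compactness machinery of Section~\ref{S-main}. Define $v_\delta(x) := \mu_\delta^{(N-2)/2}w(\mu_\delta x)$ with $\mu_\delta$ determined via a concentration-function equation analogous to~\eqref{optimal} (formulated for $|v_\delta|^{p^\ast}$). Then $v_\delta$ is $D^1$-bounded and solves an equation whose $|v_\delta|^{q-2}v_\delta$-coefficient decays like $\mu_\delta^{-(N-2)(q-p^\ast)/2}$. Lions' concentration--compactness, applied as in Lemma~\ref{l-dist}, forces $v_\delta \to W_1$ in $D^1(\R^N)$ and $L^{p^\ast}(\R^N)$. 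The hypothesis $q > N(N+2)/(2(N-2))$ provides the additional integrability needed to upgrade this to $L^q$-convergence, via the analogues of Corollary~\ref{q-bound} and of the lower barrier Lemma~\ref{l-barrier}. Matching $\|v_\delta\|_q^q \to \|W_1\|_q^q$ with $\|w\|_q^q \simeq C\delta$ through $\|v_\delta\|_q^q = \mu_\delta^{(N-2)(q-p^\ast)/2}\|w\|_q^q$ pins down $\mu_\delta \sim \delta^{-2/((N-2)(q-p^\ast))}$, and $v_\delta(0) \to W_1(0) = 1$ then yields $M = \mu_\delta^{-(N-2)/2}v_\delta(0) \sim \delta^{1/(q-p^\ast)}$. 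The main obstacle is precisely this last step: replicating the decay and barrier analysis of Section~\ref{S-main} without the regularizing $\eps u$-term of $(P_\eps)$ demands the integrability condition on $q$, which substitutes for the exponential tail control available in the critical case.
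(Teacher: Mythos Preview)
Your treatment of the energy asymptotics and of the two rough amplitude bounds in \eqref{serrin-super+} is correct and essentially coincides with the paper's Lemmas~\ref{l:basicSdel}--\ref{delta-lower} and~\ref{L-super-2-delta}: the lower bound via $\|w\|_q^q \le M^{q-p^\ast}\|w\|_{p^\ast}^{p^\ast}$, the upper bound via the uniform gradient estimate and the ball argument, and the test-function computation with $W_\lambda$ are all the same ideas.

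The sharp asymptotic \eqref{serrin-super-upper+}, however, is where your proposal breaks down. The machinery of Section~\ref{S-main} that you want to transplant rests on the Pokhozhaev relation of Lemma~\ref{L-pq}, namely $\|w_\eps\|_q^q = \kappa\,\eps\|w_\eps\|_2^2$, which balances the $L^q$ and $L^2$ norms. It is precisely this second norm, together with the lower barrier Lemma~\ref{l-barrier} controlling $\|v_\eps\|_2$ from below, that furnishes the \emph{upper} bound on $\lambda_\eps$ (cf.\ \eqref{lambda+lower}), and only then does Corollary~\ref{q-bound} follow. In the slightly supercritical problem there is no $\eps u$ term and hence no $L^2$ norm anywhere in the Pokhozhaev identities \eqref{pq-Pokhozhaev}; the only scaling relation you have is $\|v_\delta\|_q^q = \mu_\delta^{(N-2)(q-p^\ast)/2}\|w\|_q^q \sim \mu_\delta^{(N-2)(q-p^\ast)/2}\,\delta$, which cannot bound $\mu_\delta$ from above without first knowing that $\|v_\delta\|_q$ is bounded---exactly what you are trying to prove. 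The ``analogues of Corollary~\ref{q-bound} and Lemma~\ref{l-barrier}'' you invoke therefore do not exist here, and the condition $q > \tfrac{N(N+2)}{2(N-2)}$ plays no role in either of those results in Section~\ref{S-main}.

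The paper's proof of the matching upper bound (Lemma~\ref{l:qlarge}) proceeds by an entirely different route: it represents $w_0^\delta(0)$ through the Newtonian kernel,
\[
w_0^\delta(0) = S_0^\delta A_N \int_{\R^N}\frac{(w_0^\delta)^{p-1}-(w_0^\delta)^{q-1}}{|y|^{N-2}}\,dy,
\]
estimates the integrand by the two radial decay bounds $w_0^\delta(x)\lesssim \min\{|x|^{-(N-2)/2},\,\delta^{1/q}|x|^{-N/q}\}$ coming from the $L^{p^\ast}$ and $L^q$ norms, and optimizes over the crossover radius. The hypothesis $q>\tfrac{N(N+2)}{2(N-2)}$ enters exactly as the integrability condition at the origin for $|y|^{-(N-2)}\big(\delta^{1/q}|y|^{-N/q}\big)^{p^\ast-1}$, yielding $w_0^\delta(0)\lesssim \delta^{1/(q-p^\ast)}$ directly without any rescaling or compactness argument.
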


Let us note, however, that the asymptotic of $w_0^\delta(0)$ for
general values of $q$ is open, and numerical evidence suggests that
the conclusion of \eqref{serrin-super-upper+} is {\em false} for $q$
sufficiently close to $p^\ast$.

To prove Proposition \ref{p-bonus}, we first establish a few basic
estimates for the behavior of the minimizer of the quotient in
\eqref{S0} as $\delta \to 0$.
    \begin{lemma}
      \label{l:basicSdel}
      $\| w_0^\delta \|_\infty \leq 1$, $\| \nabla w_0^\delta \|_2
      \lesssim 1$, $\| w_0^\delta \|_p \lesssim 1$ and $\| w_0^\delta
      \|_q \lesssim \delta$.
    \end{lemma}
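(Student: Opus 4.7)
The plan is to derive the four bounds in sequence, establishing the $L^\infty$ bound first because the algebraic identities \eqref{pq-Pokhozhaev} that give the $L^p$ and $L^q$ bounds implicitly rely on it. For the $L^\infty$ bound I would repeat the truncation argument that underlies \eqref{bound} and Lemma \ref{eps-s-super}. Because $f_0\equiv 0$ outside $[0,1]$ by \eqref{f-eps}, its primitive $F_0$ is constant on $[1,\infty)$, so replacing $w_0^\delta$ by $\tilde w:=\min(w_0^\delta,1)$ leaves $\int F_0(\tilde w)\,dx$ unchanged while weakly decreasing $\|\nabla\tilde w\|_2^2$, with strict decrease unless $\{w_0^\delta>1\}$ has measure zero. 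Minimality of $w_0^\delta$ for the scale-invariant quotient $\S_0$ rules out the strict decrease, so $w_0^\delta\le 1$ almost everywhere, and radial monotonicity upgrades this to $\|w_0^\delta\|_\infty=w_0^\delta(0)\le 1$.

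For the gradient bound, Pokhozhaev's identity \eqref{Pokhozhaev-var-0} combined with the normalisation $p^\ast\int F_0(w_0^\delta)\,dx=1$ gives $\|\nabla w_0^\delta\|_2^2=S_0^\delta$, so it suffices to bound $S_0^\delta$ uniformly from above as $\delta\to 0$. I would fix any non-trivial $\phi\in C_c^\infty(\R^N)$ with $0\le\phi\le 1$ and use it as a test function for $(S_0^\delta)$. Since $p=p^\ast+\delta<q$ and $\phi\le 1$, we have $\phi^q\le\phi^{p^\ast+\delta}$, so
$$
F_0(\phi)\ge\Bigl(\tfrac{1}{p^\ast+\delta}-\tfrac{1}{q}\Bigr)\phi^{p^\ast+\delta}\ge 0,
$$
with strict positivity on $\{0<\phi<1\}$. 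Hence $\phi$ is admissible, $S_0^\delta\le\S_0(\phi)$, and the right-hand side depends continuously on $\delta$ and is finite at $\delta=0$, giving $\|\nabla w_0^\delta\|_2\lesssim 1$.

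Finally, once $w_0^\delta\le 1$ is known, the truncated nonlinearity $f_0$ coincides with its polynomial form $w^{p-1}-w^{q-1}$ on the support of $w_0^\delta$, and the derivation of \eqref{pq-Pokhozhaev} from Nehari and Pokhozhaev applies verbatim. Substituting $p=p^\ast+\delta$ yields
$$
\|w_0^\delta\|_p^p=\frac{(q-p^\ast)(p^\ast+\delta)}{p^\ast(q-p^\ast-\delta)},\qquad \|w_0^\delta\|_q^q=\frac{q\,\delta}{p^\ast(q-p^\ast-\delta)}.
$$
The first tends to $1$ as $\delta\to 0$, so $\|w_0^\delta\|_p\lesssim 1$; the second is of order $\delta$, which gives the claimed bound on the $L^q$ norm. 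No step is technically deep, and the only real point of care is the logical ordering: the polynomial identities \eqref{pq-Pokhozhaev} must be applied after the $L^\infty$ bound is in hand, since they assume the polynomial form of $f_0$ holds throughout $\R^N$, which is only valid on $\{w_0^\delta\le 1\}$.
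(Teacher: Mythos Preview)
Your proof is correct and essentially matches the paper's: the gradient bound via a fixed compactly supported test function and the $L^p$, $L^q$ bounds via \eqref{pq-Pokhozhaev} are identical, while for the $L^\infty$ bound the paper simply invokes $\|u_0\|_\infty\le 1$ together with the rescaling \eqref{var-rescale-super}, which is the same truncation idea transported to the level of $u_0$ rather than $w_0^\delta$. Your remark that the $L^\infty$ bound must logically precede the application of the polynomial identities \eqref{pq-Pokhozhaev} is a valid point of care that the paper leaves implicit.
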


    \begin{proof}
      The first inequality is an immediate consequence of $\| u_0
      \|_\infty \leq 1$ and \eqref{var-rescale-super}. To prove the
      second estimate, consider a suitable fixed test function $w \in
      C^\infty_0(\mathbb R^N)$ satisfying $0 \leq w \leq 1$. Then from
      \eqref{S0} and \eqref{S0inf} we can conclude that $S_0^\delta
      \lesssim 1$ as $\delta \to 0$, implying the result. The last two
      inequalities are immediate consequences of \eqref{pq-Pokhozhaev}.
    \end{proof}

    We now establish a rough upper bound on the amplitude of
    $w_0^\delta$.

\begin{lemma}
$\|w_0^\delta\|_\infty\lesssim \delta^\frac{1}{q+N}$.
\end{lemma}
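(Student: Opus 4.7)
The plan is, setting $w:=w_0^\delta$ and $A:=w(0)=\|w\|_\infty$ (the maximum is attained at the origin by radial monotonicity), to derive a pointwise lower bound on the ``width'' of the superlevel set $\{w\ge A/2\}$ from the radial Euler--Lagrange ODE, and then combine it with the $L^q$ estimate $\|w\|_q^q\lesssim\delta$ already available from \eqref{pq-Pokhozhaev}.

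Specifically, from the radial form of \eqref{w0S0}, $-(r^{N-1}w'(r))'=S_0^\delta\,r^{N-1}(w^{p-1}-w^{q-1})$ with $w'(0)=0$. Since $0\le w\le 1$ and $q>p$ we have $0\le w^{p-1}-w^{q-1}\le 1$, and using the uniform bound $S_0^\delta\lesssim 1$ supplied by Lemma \ref{l:basicSdel}, I would integrate from $0$ to $r$ to obtain
\beq -w'(r)\,\le\,\frac{S_0^\delta}{r^{N-1}}\int_0^r s^{N-1}\,ds\,=\,\frac{S_0^\delta}{N}\,r\,\le\,C r \eeq
for all $r\ge 0$, with $C>0$ independent of $\delta$. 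Integrating once more yields $A-w(r)\le Cr^2/2$, so $w\ge A/2$ on the ball $B_{r_0}$ with $r_0:=\sqrt{A/C}\gtrsim A^{1/2}$.

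Combining this with the $L^q$ bound, I would then estimate
\beq \delta\,\gtrsim\,\|w\|_q^q\,\ge\,\int_{B_{r_0}}(A/2)^q\,dx\,\gtrsim\, A^q r_0^N\,\gtrsim\, A^{\,q+N/2}, \eeq
which gives $A\lesssim \delta^{1/(q+N/2)}$. Since $A\le 1$, $\delta\le 1$ and $q+N/2<q+N$, we have $\delta^{1/(q+N/2)}\le \delta^{1/(q+N)}$, so $\|w_0^\delta\|_\infty=A\lesssim \delta^{1/(q+N)}$, as claimed. The only step of substance is the pointwise gradient estimate from the radial ODE, and I do not foresee any serious obstacle: it relies solely on the $L^\infty$ bound and the uniform upper bound on $S_0^\delta$ already in hand from Lemma \ref{l:basicSdel}. (I note in passing that using $w\le A$ instead of $w\le 1$ when bounding $w^{p-1}-w^{q-1}$ would produce the sharper exponent $1/(q-p^\ast-N\delta/2)\sim 1/(q-p^\ast)$ relevant to \eqref{serrin-super-upper+}, but the crude bound $\delta^{1/(q+N)}$ is sufficient for the present lemma.)
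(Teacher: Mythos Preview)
Your proof is correct. Both your argument and the paper's follow the same overall plan---derive a gradient bound, use it to bound below the radius of the superlevel set $\{w_0^\delta\ge\tfrac12\|w_0^\delta\|_\infty\}$, and combine with $\|w_0^\delta\|_q^q\sim\delta$ from \eqref{pq-Pokhozhaev}---but they differ in how the gradient bound is obtained. The paper appeals to Calder\'on--Zygmund regularity for \eqref{w0S0} (using the uniform $L^p$-bound on the right-hand side) to conclude $\|\nabla w_0^\delta\|_\infty\le C$, which gives a superlevel ball of radius $R\sim A$ and hence $A^{q+N}\lesssim\delta$. You instead integrate the radial ODE directly: from $0\le w^{p-1}-w^{q-1}\le 1$ and $S_0^\delta\lesssim 1$ you obtain the sharper local estimate $|w'(r)|\le Cr$, yielding a larger superlevel ball of radius $r_0\sim A^{1/2}$ and the stronger inequality $A^{q+N/2}\lesssim\delta$, from which the stated bound follows a fortiori. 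Your route is more elementary (no elliptic regularity is invoked) and produces a better intermediate exponent; the paper's route, by contrast, does not rely on the radial structure for the gradient step. Your closing parenthetical---that replacing $w^{p-1}\le 1$ by $w^{p-1}\le A^{p-1}$ in the same ODE argument yields $A^{\,q-p^\ast-N\delta/2}\lesssim\delta$, essentially the conclusion of Lemma~\ref{l:qlarge} and apparently without the restriction $q>\tfrac{N(N+2)}{2(N-2)}$ imposed there---is an interesting observation that goes beyond the present lemma and would be worth examining on its own.
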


\begin{proof}
  In view of the gradient estimate of Lemma \ref{l:basicSdel}, by
  Calder\'on--Zygmund inequality \cite[Theorem 9.11]{Gilbarg}
   applied to $w_0^\delta$ solving
  \eqref{w0S0} we conclude that $\| w_0^\delta \|_{W^{2,p}_{loc}(\mathbb
    R^N)}$ is uniformly bounded and, hence, $\| \nabla w_0^\delta
  \|_\infty \leq C$ for some $C > 0$ independent of $\delta$ for
  sufficiently small $\delta > 0$. This yields the following estimate
  for some $c > 0$ independent of $\delta$:
  \begin{align}
    \label{eq:vdel0inf}
    c \| w_0^\delta \|_\infty^{q+N} \leq \frac{1}{2^q} \| w_0^\delta
    \|_\infty^q |B_R(0)| \leq \int_{B_R(0)} |w_0^\delta|^q dx \leq \|
    w_0^\delta \|_q^q,
  \end{align}
  where $R = \| w_0^\delta \|_\infty / (2 C)$, and we used
  monotonicity of $w_0^\delta(x)$ in $|x|$. The result then follows
  from the fact that $\| w_0^\delta \|_q^q \sim \delta$ by
  \eqref{pq-Pokhozhaev}.
\end{proof}

The relations in \eqref{pq-Pokhozhaev} immediately lead to the
  following lower bound on $w_0^\delta(0)$.

\begin{lemma}\label{delta-lower}
$\|w_0^\delta\|_\infty\gtrsim\delta^\frac{1}{q-p^\ast}$.
\end{lemma}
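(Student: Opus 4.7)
The plan is to obtain the lower bound via the elementary pointwise interpolation $w^p \geq w^q/\|w\|_\infty^{q-p}$ applied to the minimizer $w_0^\delta$. Since the Pokhozhaev--Nehari relation \eqref{pq-Pokhozhaev} already gives the exact values of $\|w_0^\delta\|_p^p$ and $\|w_0^\delta\|_q^q$, this reduces the whole matter to a short algebraic manipulation of explicit quantities, with no variational or PDE input beyond \eqref{pq-Pokhozhaev}.

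First, from \eqref{pq-Pokhozhaev} with $\delta = p - p^\ast$ and $q - p = q - p^\ast - \delta$, I would read off the two asymptotic relations
\[
\|w_0^\delta\|_p^p = \frac{(q-p^\ast)p}{(q-p^\ast-\delta)\,p^\ast} \sim 1, \qquad \|w_0^\delta\|_q^q = \frac{\delta\, q}{(q-p^\ast-\delta)\,p^\ast} \sim \delta,
\]
as $\delta \to 0$, both of which follow immediately since $q - p^\ast > 0$ is fixed.

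Next, using $q > p$ together with the trivial pointwise bound $w_0^\delta(x)^{q-p} \leq \|w_0^\delta\|_\infty^{q-p}$, I would split $(w_0^\delta)^q = (w_0^\delta)^{q-p}\,(w_0^\delta)^p$ and integrate to obtain
\[
\|w_0^\delta\|_q^q \leq \|w_0^\delta\|_\infty^{q-p}\,\|w_0^\delta\|_p^p,
\]
which, combined with the two asymptotics above, yields $\|w_0^\delta\|_\infty^{q-p} \gtrsim \delta$.

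The only mildly subtle point, and the step I would flag as needing care, is that the resulting inequality $\|w_0^\delta\|_\infty \gtrsim \delta^{1/(q-p)}$ has exponent $1/(q-p)$ rather than the claimed $1/(q-p^\ast)$. However, since $q-p = q-p^\ast - \delta$, the ratio
\[
\frac{\delta^{1/(q-p)}}{\delta^{1/(q-p^\ast)}} = \delta^{-\delta/((q-p)(q-p^\ast))}
\]
tends to $1$ as $\delta \to 0$ (because $\delta \log \delta \to 0$), so the two lower bounds are equivalent in the sense of $\gtrsim$ for all small $\delta>0$, and the stated inequality follows. No genuine obstacle arises; the argument is purely algebraic once \eqref{pq-Pokhozhaev} is in hand.
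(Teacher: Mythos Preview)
Your proposal is correct and follows essentially the same approach as the paper: both use the pointwise interpolation $\|w_0^\delta\|_q^q \le \|w_0^\delta\|_\infty^{q-p}\|w_0^\delta\|_p^p$ together with the explicit norm identities \eqref{pq-Pokhozhaev}, and then absorb the discrepancy between the exponents $1/(q-p)$ and $1/(q-p^\ast)$ using $\delta\log\delta\to 0$. Your treatment of this last point is in fact more explicit than the paper's, which simply appeals to ``smallness of $\delta$''.
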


\begin{proof}
  Indeed, by
  \eqref{pq-Pokhozhaev} we have \beq \delta \| w_0^\delta \|^p_p \leq {p \over
    q} (q - p^\ast) \| w_0^\delta \|_\infty^{q-p^* - \delta} \|
  w_0^\delta \|_p^p
  \eeq
    and the result follows from $\|w_0^\delta\|_p^p > 0$ and smallness
  of $\delta$.
\end{proof}

Importantly, for sufficiently large $q$ we can prove a matching
upper bound, yielding the precise asymptotic behavior of the
  minimizer's amplitude as $\delta \to 0$.

\begin{lemma}
  \label{l:qlarge}
  If $q> {N (N + 2) \over 2 (N - 2)}$ then
  $\|w_0^\delta\|_\infty\lesssim\delta^\frac{1}{q-p^\ast}$.
\end{lemma}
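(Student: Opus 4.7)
The plan is to introduce the natural concentration rescaling of $w_0^\delta$ around its maximum, identify the limit as the critical Emden--Fowler ground state $W_1$, and then invert the rescaling at the level of $L^q$ norms. Setting $M := w_0^\delta(0) = \|w_0^\delta\|_\infty$ and
\[
v^\delta(y) := M^{-1} w_0^\delta\bigl(M^{-(p-2)/2} y\bigr),
\]
one has $v^\delta(0)=1$, $0 \leq v^\delta \leq 1$, $v^\delta$ is radial nonincreasing, and
\[
-\Delta v^\delta = S_0^\delta (v^\delta)^{p-1} - S_0^\delta M^{q-p} (v^\delta)^{q-1} \quad \text{in } \R^N.
\]
The previous lemma gives $M \lesssim \delta^{1/(q+N)} \to 0$, hence $M^{q-p} \to 0$, so the negative term is a vanishing perturbation of the critical nonlinearity; combined with the polynomial lower bound $M \gtrsim \delta^{1/(q-p^\ast)}$ from Lemma \ref{delta-lower}, we also get $M^\delta \to 1$, so that the change-of-variable identity $\|\nabla v^\delta\|_2^2 = M^{(N-2)\delta/2} S_0^\delta$ forces $\|\nabla v^\delta\|_2^2 \to S_\ast$.

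The next step is compactness and identification of the limit. The uniform $L^\infty$ bound $v^\delta \leq 1$ makes the right-hand side of the rescaled equation bounded, so the Calder\'on--Zygmund plus Schauder bootstrap used in the proof of Corollary \ref{cor-reg} supplies $C^2_{loc}(\R^N)$ convergence along a subsequence to a function $V$ solving $-\Delta V = S_\ast V^{p^\ast-1}$, with $V(0)=1$ by pointwise convergence at the origin. Radial monotonicity together with the classification of positive radial solutions of the critical Emden--Fowler equation then identifies $V = W_1$.

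The concluding step is to upgrade to $L^q$ convergence and invert the scaling. The Strauss radial decay estimate $v^\delta(x) \leq C|x|^{-(N-2)/2}$, valid uniformly in $\delta$ by the bounded Dirichlet norm, together with $v^\delta \leq 1$ supplies a $\delta$-independent integrable majorant in $L^q(\R^N)$, so dominated convergence gives $\|v^\delta\|_q \to \|W_1\|_q > 0$. Substituting into the scaling identity
\[
\|v^\delta\|_q^q = M^{-q + p^\ast + N\delta/2} \|w_0^\delta\|_q^q
\]
and using $\|w_0^\delta\|_q^q \sim \delta$ from \eqref{pq-Pokhozhaev} then produces $M^{q-p^\ast - N\delta/2} \lesssim \delta$, and the harmless factor $M^{N\delta/2} \leq 1$ is absorbed to give $M \lesssim \delta^{1/(q-p^\ast)}$, as required.

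The main technical obstacle is the precise role of the hypothesis $q > N(N+2)/(2(N-2))$: in the sketch above the weaker condition $q > p^\ast$ already powers the radial dominated convergence argument, so the stronger restriction must enter at a step I have glossed over---most plausibly in ensuring that the perturbation $M^{q-p}(v^\delta)^{q-1}$ is small enough in a quantitative norm to preserve strong compactness at the critical Sobolev exponent, and in ruling out any residual concentration or mass loss of $v^\delta$ that would invalidate the identification $\|v^\delta\|_q \to \|W_1\|_q$. Locating the exact step at which this condition becomes essential is the delicate point of the argument.
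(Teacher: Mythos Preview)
Your approach is genuinely different from the paper's, and this is worth spelling out.

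\medskip
\textbf{What the paper does.} The paper represents $w_0^\delta(0)$ via the Newtonian kernel,
\[
w_0^\delta(0)=\frac{S_0^\delta}{N-2}\int_0^\infty\big((w_0^\delta(r))^{p-1}-(w_0^\delta(r))^{q-1}\big)\,r\,dr,
\]
bounds $(w_0^\delta)^{p-1}\le (w_0^\delta)^{p^\ast-1}$ (since $w_0^\delta\le 1$), and then inserts the radial decay estimate $w_0^\delta(r)\lesssim\min\{r^{-(N-2)/2},\,\delta^{1/q}r^{-N/q}\}$. Splitting the resulting integral at a free radius $R$ and optimising yields the bound $\delta^{1/(q-p^\ast)}$. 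The hypothesis $q>\tfrac{N(N+2)}{2(N-2)}$ is exactly the condition for the near-origin piece $\int_0^R r^{\,1-N(N+2)/(q(N-2))}\,dr$ to converge; without it the Newton-potential estimate blows up and the method fails.

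\medskip
\textbf{What you do.} You rescale around the maximum, pass to the limit $v^\delta\to W_1$ via ODE/elliptic compactness, and invert the scaling at the level of the $L^q$ norm using the exact Pokhozhaev value of $\|w_0^\delta\|_q^q$. Your argument is clean: the pointwise convergence $v^\delta\to W_1$ follows from continuous dependence of the radial ODE on the parameters $(S_0^\delta,p,M^{q-p})\to(S_\ast,p^\ast,0)$, and for the conclusion you in fact only need $\liminf\|v^\delta\|_q>0$, which is immediate from Fatou---no dominated convergence or majorant is even required. The scaling identity then gives $M^{\,q-p^\ast-N\delta/2}\lesssim\delta$, and $M\le 1$ absorbs the exponent shift.

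\medskip
\textbf{On the restriction.} Your suspicion is correct: the threshold $q>\tfrac{N(N+2)}{2(N-2)}$ does not enter your argument at any point---it is an artefact of the paper's Newton-potential method, not of your rescaling method, which formally runs for every $q>p^\ast$. This is in tension with the paper's remark (just after Proposition~\ref{p-bonus}) that numerical evidence suggests the two-sided asymptotic $w_0^\delta(0)\sim\delta^{1/(q-p^\ast)}$ \emph{fails} for $q$ close to $p^\ast$. So either your blow-up argument genuinely improves on the paper's lemma---and the numerical evidence is misleading---or there is a subtle defect in one of the compactness steps that neither of us has located. Each step you wrote checks out as stated; I would flag this as worth investigating rather than as a gap in your proof.
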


\begin{proof}
  In view of Lemmas \ref{l:radial} and \ref{l:basicSdel} and Sobolev
  inequality, we have \beqa\label{delta-upper} w_0^\delta(x)&\le
  &\min\big\{C_{p^\ast}|x|^{-\frac{N}{p^\ast}}
  \|w_0^\delta\|_{p^\ast},C_{q}
  |x|^{-\frac{N}{q}}\|w_0^\delta\|_{q}\big\}, \\ \nonumber &\lesssim
  &\min\big\{ |x|^{-\frac{N-2}{2}},\delta^{\frac{1}{q}}
  |x|^{-\frac{N}{q}}\big\}. \eeqa In view of \eqref{Green-super},
  \eqref{var-rescale-super} and Lemma \ref{eps-s-super}, we can apply
  Newtonian kernel to $(P_0^\delta)$. We obtain \beq
  w_0^\delta(x)=S_0^\delta A_N\int_{\R^N}\frac{(w_0^\delta(y))^{p-1}-
    (w_0^\delta(y))^{q-1}}{|x-y|^{N-2}}dy,\eeq where
  $A_N=\tfrac{\Gamma((N - 2)/2)}{4\pi^{N/2}}$.  In particular, for
  $q>\frac{N(N+2)}{2 (N - 2)}$ we have (with a slight abuse of
  notation) \beqn w_0^\delta(0)&=&{S_0^\delta \over N - 2}
  \int_0^\infty\big( (w_0^\delta(r))^{p-1}
  - (w_0^\delta(r))^{q-1}\big) r\,dr\\
  \nonumber
  &\le& S_\ast (1 + o(1)) \int_0^\infty (w_0^\delta(r))^{p^\ast-1} r\,dr\\
  \nonumber &\lesssim&\int_0^\infty \min\big\{
  r^{-\frac{N-2}{2}},\delta^{\frac{1}{q}}
  r^{-\frac{N}{q}}\big\}^{\frac{N+2}{N-2}} r\,dr,\\
  \nonumber &\lesssim& \int_R^\infty r^{-\frac{N}{2}}\,dr+
  \delta^{\frac{N+2}{q(N-2)}}\int_0^R
  r^{1-\frac{N(N+2)}{q(N-2)}}\,dr,\\ \nonumber &\lesssim&
  R^{-\frac{N-2}{2}}+
  \delta^{\frac{N+2}{q(N-2)}}R^{2-\frac{N(N+2)}{q(N-2)}}.\\
  \nonumber \eeqn Minimizing for $q>\frac{N(N+2)}{2 (N - 2)}$ the
  function \beq\psi_\delta(R):= R^{-\frac{N-2}{2}}+
  \delta^{\frac{N+2}{q(N-2)}}R^{2-\frac{N(N+2)}{q(N-2)}}\eeq we obtain
  \beq\min_{R>0}\psi_\delta(R)=\psi_\delta(R_\ast)\sim
  \delta^{\frac{1}{q-p^\ast}},\eeq where
  $R_\ast\sim\delta^{-\frac{2}{(N-2) (q- p^\ast)}}$.
\end{proof}

We also establish the energy convergence estimate.

\begin{lemma}\label{L-super-2-delta}
$0<S_0^\delta-S_\ast\to 0$ as $\delta\to 0$.
\end{lemma}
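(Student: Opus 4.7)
The plan is to follow the template of Lemma \ref{L-super-2}, but now with the Sobolev constant $S_\ast$ playing the role of $S_0$ and the Aubin--Talenti bubbles $W_\lambda$ from \eqref{U-tilde} replacing the limit ground state $w_0$ as the family of test functions.

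For the strict inequality $S_\ast < S_0^\delta$, I would exploit the uniform amplitude bound $\|w_0^\delta\|_\infty \le 1$ from Lemma \ref{l:basicSdel}. Since $p=p^\ast+\delta>p^\ast$, the pointwise monotonicity $(w_0^\delta)^p \le (w_0^\delta)^{p^\ast}$ holds. Dropping the nonnegative $L^q$ term in the constraint $p^\ast\int F_0(w_0^\delta)\,dx=1$ then yields $\|w_0^\delta\|_{p^\ast}^{p^\ast} \ge p/p^\ast > 1$; combining this with the Sobolev inequality $\|\nabla w_0^\delta\|_2^2 \ge S_\ast \|w_0^\delta\|_{p^\ast}^2$ and using the identity $2/p^\ast=(N-2)/N$ produces
$$
S_0^\delta = \|\nabla w_0^\delta\|_2^2 \ \ge\ S_\ast \big(p/p^\ast\big)^{(N-2)/N} \ >\ S_\ast.
$$

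For the matching convergence $S_0^\delta \to S_\ast$, I would evaluate $\mathcal S_0^\delta$ on $W_{\lambda_\delta}$ for a suitable sequence $\lambda_\delta \to \infty$. The scaling identity $\|W_\lambda\|_s^s = \lambda^{-(N-2)(s-p^\ast)/2}\|W_1\|_s^s$, valid for all $s>N/(N-2)$ (which covers both $s=q$ and $s$ near $p^\ast$ in view of the decay $W_1 \sim |x|^{-(N-2)}$), immediately gives $\|W_{\lambda_\delta}\|_q^q \to 0$, while $\|W_{\lambda_\delta}\|_p^p = \lambda_\delta^{-(N-2)\delta/2}\|W_1\|_p^p \to 1$ provided $\delta\log\lambda_\delta \to 0$; the choice $\lambda_\delta := \delta^{-1}$ realizes both. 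For $\delta$ small, $W_{\lambda_\delta}(x)\le\lambda_\delta^{-(N-2)/2}<1$ pointwise, so the truncation in $F_0$ is inactive and $p^\ast\int F_0(W_{\lambda_\delta})\,dx = \tfrac{p^\ast}{p}\|W_{\lambda_\delta}\|_p^p - \tfrac{p^\ast}{q}\|W_{\lambda_\delta}\|_q^q \to 1$. Hence
$$
S_\ast \ <\ S_0^\delta \ \le\ \mathcal S_0^\delta(W_{\lambda_\delta}) \ =\ \frac{\|\nabla W_{\lambda_\delta}\|_2^2}{\big(p^\ast\int F_0(W_{\lambda_\delta})\,dx\big)^{(N-2)/N}} \ \longrightarrow\ S_\ast,
$$
which yields the claim.

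The only subtle point is the verification that $\|W_1\|_p^p \to \|W_1\|_{p^\ast}^{p^\ast} = 1$ as $p\downarrow p^\ast$, which I would handle by dominated convergence using the pointwise envelope $|W_1|^p \le |W_1|^{p_1}+|W_1|^{p_2}$ for fixed exponents $N/(N-2)<p_1<p^\ast<p_2$; both of $|W_1|^{p_i}$ are integrable thanks to the decay rate of $W_1$ at infinity. Everything else reduces to the explicit scaling relations for the bubbles $W_\lambda$ and is essentially routine.
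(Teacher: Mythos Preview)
Your proof is correct and follows essentially the same strategy as the paper: the strict inequality $S_\ast < S_0^\delta$ is obtained by inserting the minimizer $w_0^\delta$ into the Sobolev quotient (the paper uses the exact Pokhozhaev relation \eqref{pq-Pokhozhaev} for $\|w_0^\delta\|_p^p$ in place of your cruder bound from the constraint plus amplitude estimate), and the upper bound comes from testing $\mathcal S_0^\delta$ against the bubbles $W_\lambda$. The only notable difference is that the paper actually optimizes over $\lambda$, obtaining an explicit $\lambda_\ast \sim \delta^{-\frac{2}{(N-2)(q-p)}}$ and, as a byproduct, a supercritical Gagliardo--Nirenberg type inequality; your direct choice $\lambda_\delta = \delta^{-1}$ forgoes that quantitative information but is perfectly adequate for the qualitative convergence statement.
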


\proof Taking into account \eqref{pq-Pokhozhaev} we obtain \beq
S_\ast\le\S_\ast(w_0^\delta)= \frac{\|\nabla
  w_0^\delta\|_2^2}{\|w_0^\delta\|_p^2}= \Big(\frac{p^\ast
    (q-p)}{p(q-p^\ast)}\Big)^{2/p} S_0^\delta<S_0^\delta.  \eeq To
control $S_0^\delta$ from above we will use the Sobolev minimizers
$(W_\lambda)_{\lambda>0}$ as a family of test function for
$(S_0^\delta)$.  Using \eqref{W-ast} we obtain
\beq\label{S-N-super-delta} \S_0^\delta(W_\lambda)=
\frac{S_\ast}{\left(\frac{p^\ast}{p} \lambda^{-\frac{N-2}{2}\delta}
    \|W_1\|_{p}^{p}-\frac{p^\ast}{q}
    \lambda^{-\frac{N-2}{2}(q-p^\ast)}
    \|W_1\|_{q}^q\right)^\frac{N-2}{N}}.  \eeq
To minimize the right hand side of \eqref{S-N-super-delta}, we need to
maximize for $\lambda>0$ the scalar function
\beq\psi(\lambda):=\frac{p^*}{p}\|W_1\|_p^p
\lambda^{-\frac{1}{2}(N-2)(p-p^\ast)}-\frac{p^*}{q}\|W_1\|_q^q\,
\lambda^{-\frac{1}{2}(N-2 ) (q-p^\ast)}\,.\eeq It is easy to see
that $\psi$ achieves its maximum at
\beq\lambda_\ast:=\left(\frac{p (q-p^\ast)
      \|W_1\|_q^q}{q(p-p^\ast) \|W_1\|_p^p}
  \right)^{\frac{2}{N-2}\frac{1}{q-p}},\eeq and
\beq\psi(\lambda_\ast)=A(p,p^\ast,q)
\|W_1\|_p^{\frac{p(q-p^\ast)}{q-p}}
\|W_1\|_q^{-\frac{q(p-p^\ast)}{q-p}},\eeq where \beq
A(p,p^\ast,q):=p^\ast
p^{-\frac{q-p^\ast}{q-p}}q^{\frac{p-p^\ast}{q-p}}
\left\{\left(\frac{q-p^\ast}{p-p^\ast}\right)^{-\frac{p-p^\ast}{q-p}}-
  \left(\frac{q-p^\ast}{p-p^\ast}\right)^{-\frac{q-p^\ast}{q-p}}\right\}>0. \eeq
In particular, when $\delta=p-p^\ast\to 0$ we have $A(p, p^\ast,
  q) \simeq 1$ and $\psi(\lambda_\ast) \simeq 1$, so \beq
S_0^\delta\le\S_0^\delta(W_{\lambda_\ast})=\big(\psi(\lambda_\ast)
\big)^{-\frac{N-2}{N}} S_\ast=S_\ast\big(1+o(1)\big), \eeq which
completes the proof.  \qed

\begin{remark}
  Instead of $W_\lambda$ we can use rescalings of an arbitrary
  function $w\in D^1(\R^N)\cap L^q(\R^N)$ as a family of test function
  in \eqref{S-N-super-delta}.  Then, taking into account that by
  Sobolev imbedding $w \in L^{p^\ast}(\mathbb R^N)$ and, hence, by
  interpolation we have $w \in L^p(\mathbb R^N)$ as well, the above
  argument with generic $q>p>p^*$ leads to \beq\label{GN} S_0(p,
  p^\ast, q) A^{2 \over p^\ast}({p,p^\ast,q})\|w\|_p^{\frac{2 p
      (q-p^\ast)}{p^\ast(q-p)}} \leq \|\nabla w\|_2^2 \;
  \|w\|_q^{\frac{2 q(p-p^\ast)}{p^\ast(q-p)} }, \eeq which could be
  interpreted as a {\sl supercritical} Gagliardo--Nirenberg type
  inequality. Similar ideas where used in \cite{DelPino} to establish
  sharp constants in the classical Gagliardo--Nirenberg inequality,
  which {\em formally} coincides with \eqref{GN} when $1<q<p<p^\ast$.
\end{remark}

\section{Subcritical case $2<p<p^\ast$ revisited: proof of Theorem
  \ref{th-sub}.}\label{S-sub}

In the subcritical case Pokhozhaev's identity implies that the limit
equation $(P_0)$ has no positive finite energy solutions. As discussed
in the Introduction, to understand the asymptotic behavior of the
ground states $u_\eps$ we consider the rescaling in \eqref{R-sub},
  which transforms $(P_\eps)$ into $(R_\eps)$, with the associated
  limit problem as $\eps\to 0$ given by $(R_0)$ (see
  Sec. \ref{s:intro}).

Let $G_\eps:\R\to\R$ be a bounded $C^2$--function such that \beq
G_\eps(w):=\frac{1}{p}|w|^p-\frac{1}{2}|w|^2-
\frac{\eps^{\frac{q-p}{p-2}}}{q}|w|^q \eeq for $0\le w\le
\eps^{-\frac{1}{p-2}}$, $G_\eps(w)\le 0$ for
$w>\eps^{-\frac{1}{p-2}}$, and $G_\eps(w)=0$ for $w\le 0$.  For
$\eps\in[0,\eps^\ast)$, consider a family of the constrained
minimization problems
$$
S_\eps' :=\inf\left\{\int_{\R^N}|\nabla w|^2\,dx\Big|\; w\in
  H^1(\R^N),\;p^\ast\int_{\R^N}G_\eps(w)\,dx\;=1\right\}.
\leqno{(S_\eps')}
$$
Note that all the problems $(S_\eps')$, including the limit
problem $(S_0')$, are well posed in the same energy space
$H^1(\R^N)$.  According to \cite[Theorem 2]{BL-I}, $(S_\eps)$
admits a radial positive minimizer $w_\eps$ for every
$\eps\in[0,\eps^\ast)$.
In view of its uniqueness \cite{Kwong}, the rescaled function
\beq\label{var-rescale-super-eps}
v_\eps(x):=w_\eps\Big(\frac{x}{\sqrt{S_\eps'}}\Big), \eeq coincides
with the radial ground state of $(R_\eps)$.

In order to estimate $S_\eps'$, consider the associated dilation
invariant representation \beq\S_\eps'(w):=\frac{\int_{\R^N}|\nabla
  w|^2\,dx} {\Big(p^\ast\int_{\R^N}
  G_\eps(w)\,dx\Big)^\frac{N-2}{N}},\qquad w\in \mathcal M_\eps',\eeq
where $\mathcal M_\eps' :=\big\{0\le u\in
H^1(\R^N),\;\int_{\R^N}G_\eps(w)\,dx >0 \big\}$.  Clearly \beq S_\eps'
=\inf_{w\in\mathcal M_\eps'} \S_\eps'(w)\eeq and for sufficiently
small $\eps$ we have \beq \label{S0pSep} S_0'\le \S_0'(w_\eps)<
\S_\eps'(w_\eps)= S_\eps'.  \eeq Indeed, since by definition
  $p^\ast \int_{\mathbb R^N} G_\eps(w_\eps) dx = 1$ and $G_\eps(s)$ is
  a decreasing function of $\eps$ for each $s > 0$, we have $w_\eps
  \in \mathcal M_0'$, and the second inequality again follows by
  monotonicity of $G_\eps(s)$ in $\eps$.  At the same time, by
  continuity $w_0 \in \mathcal M_\eps'$ for sufficiently small
  $\eps$. Therefore, using $w_0$ as a test function for $(S_\eps')$,
we obtain for sufficiently small $\eps$ \beq\label{S-eps-prime}
S_\eps'\le\S_\eps'(w_0)~=~ \frac{S_0'}{\left(1-\frac{p^*}{q} \eps^{q -
      p \over p - 2} \|w_0\|_q^q\right)^\frac{N-2}{N}}\le S_0'+O\left(
  \eps^{q - p \over p - 2} \right).  \eeq Therefore, $S_\eps'\to S_0'$.

Arguing as in the proof of Lemma \ref{L-pq}, we may conclude that
  \begin{align}
    \label{eq:wepsL2Lp}
    \| w_\eps \|_p^p = {(q - p^\ast) p \over (q - p) p^\ast} + {p (q -
      2) \over 2 (q - p)} \| w_\eps \|_2^2.
  \end{align}
  Then, using this identity to compute $\mathcal S_0'(w_\eps)$ and the
  convergence of $S_\eps'$ to $S_0'$, after some tedious algebra we
  obtain
  \begin{align}
    \label{eq:S0prime}
   \lim_{\eps \to 0} \| w_\eps \|_2^2 = {2 (p^\ast - p) \over p^\ast
     (p - 2)}, \qquad \lim_{\eps \to 0} \| w_\eps \|_p^p = {(p^\ast -
     2) p \over (p - 2) p^\ast}.
  \end{align}
  In particular, this implies that $p^\ast \int_{\mathbb R^N}
  G_0(w_\eps) dx \to 1$ as $\eps \to 0$. Hence, there exists a
  rescaling $\lambda_\eps \to 1$ such that $p^\ast \int_{\mathbb R^N}
  G_0(\tilde w_\eps) dx = 1$ and $\mathcal S_\eps'(\tilde w_\eps) \to
  S_0'$ for $\tilde w_\eps(x) := w_\eps(\lambda_\eps x)$. This implies
  that $(\tilde w_\eps)$ is a minimizing family for $(S_0')$ that
  satisfies the constraint used in the analysis of \cite{BL-I}.  Then,
  applying \cite[Theorem 2]{BL-I} we conclude that for a sequence
  $\eps_n \to 0$ we have $\tilde w_{\eps_n} \to \bar w$ strongly in
  $H^1(\mathbb R^N)$, and in view of the convergence of
  $(\lambda_\eps)$ we have $w_{\eps_n} \to \bar w$ as well, where
  $\bar w$ is the minimizer of $(S_0')$ satisfying the
  constraint. Therefore, by uniqueness of minimizers of $(R_0)$
  \cite{Kwong}, we have $\bar w = w_0$ and the limit is a full limit.


  Finally, arguing as in the proof of Lemma \ref{l-w}, using $\|
  w_\eps \|_{p^\ast}$ instead of the $L^q$ norm to control the growth
  of $w_\eps$ at the origin, we also conclude that $\| w_\eps
  \|_\infty \lesssim 1$ as $\eps \to 0$. Then by standard elliptic
  regularity, similarly to the proof of Corollary \ref{cor-reg}, we
  conclude that $w_\eps$ converges to $w_0$ in $L^s(\mathbb R^N)$ for
  any $s\ge 2$ and in $C^2(\R^N)$, which completes the proof of Theorem
  \ref{th-sub}.


\begin{thebibliography}{99}

\bibitem{Abramowitz}
{\sc M. Abramowitz, I. A. Stegun},
Handbook of Mathematical Functions.
Courier Dover Publications, 1964. 1046 pp.

\bibitem{Agmon} {\sc S. Agmon}, Bounds on exponential decay of
  eigenfunctions of Schr\"odinger operators.  In: Schr\"odinger
  operators.  Lecture Notes in Mathematics, 1159, pp.1--38.  Springer
  Verlag, Berlin, 1985.

\bibitem{Ambrosetti} {\sc A. Ambrosetti, A. Malchiodi}, Perturbation
  methods and semilinear elliptic problems on ${\mathbb R}^n$.
  Progress in Mathematics, 240. Birkh\"auser Verlag, Basel,
  2006. xii+183 pp.

\bibitem{Atkinson} {\sc F.V. Atkinson, L.A. Peletier}, Elliptic
  equations with nearly critical growth. J. Differential Equations
  {\bf 70} (1987), 349--365.

\bibitem{BL-I} {\sc H.\,Berestycki, P.-L.\,Lions}, Nonlinear scalar
  field equations. I: Existence of a ground state.  Arch. Rational
  Mech. Anal. {\bf 82} (1983), 313--345.

\bibitem{Brezis-eps} {\sc H. Brezis, L.A. Peletier}, Asymptotics for
  elliptic equations involving critical exponents. In: Partial
  Differential Equations and Calculus of Variations, Birkh\"auser,
  1989, 149--192.

\bibitem{cahn59} {\sc J.~W. Cahn and J.~E. Hilliard},
Free energy     of a nonuniform system .3. nucleation in a 2-component
    incompressible fluid. J. Chem. Phys., 31 (1959), 688--699.

\bibitem{coleman77} {\sc S.~Coleman}, Fate of the false vacuum:
    Semiclassical theory. Phys.  Rev. D, 15 (1977), 2929--2936.

\bibitem{cross93} {\sc M.~Cross and P.~C. Hohenberg}, Pattern
    formation outside of equilibrium. Rev. Mod. Phys., 65 (1993),
  851--1112.

\bibitem{DelPino}
{\sc M. Del Pino and J. Dolbeault},
Best constants for Gagliardo-Nirenberg inequalities and applications to nonlinear diffusions.
J. Math. Pures Appl. {\bf 81 (9)} (2002), 847--875.


\bibitem{Gazzola-III}
{\sc A. Ferrero, F. Gazzola,}
Asymptotic behavior of ground states of quasilinear elliptic problems with two vanishing parameters, Part III.
J. Diff. Eq. {\bf 198} 2004, 53--90.


\bibitem{Gazzola} {\sc F. Gazzola, J. Serrin,} Asymptotic behavior of
  ground states of quasilinear elliptic problems with two vanishing
  parameters. Ann. Inst. Henri Poincar\'e, Analyse non Lin\'eaire {\bf
    19} (2002), 477--504.

\bibitem{Gazzola-II} {\sc F. Gazzola, L. Peletier, P. Pucci,
    J. Serrin,} Asymptotic behavior of ground states of quasilinear
  elliptic problems with two vanishing parameters, Part II.
  Ann. Inst. Henri Poincar\'e, Analyse non Lin\'eaire {\bf 20} (2003),
  947--974.

\bibitem{GNN} {\sc B.\,Gidas, W.M Ni, L.\,Nirenberg}, Symmetry of
  positive solutions of nonlinear elliptic equations in $\mathbb R^n$.
  Adv. Math., Suppl. Stud. 7A (1981), 369--403.

\bibitem{Gilbarg} {\sc D.\,Gilbarg and N.\,S.\,Trudinger}, Elliptic
  Partial Differential Equations of Second Order.  Springer-Verlag,
  Berlin-Heidelberg-New York, 1977.

\bibitem{Han}
{\sc Z.-C. Han},
Asymptotic approach to singular solutions for nonlinear elliptic equations involving critical Sobolev exponent.
Ann. Inst. H. Poincar\'e Anal. Non Lin\'eaire {\bf 8} (1991), 159--174.

\bibitem{Kwong} {\sc M. K. Kwong}, {Uniqueness of positive solutions
    of $\Delta u - u + u^p = 0$ in $\mathbb{R}^n$}. Arch. Rational
  Mech. Anal. \textbf{105} (1989), 243--266.

\bibitem{Kwong-plus} {\sc M. K. Kwong, J. B. McLeod, L. Peletier,
    W. Troy}, On ground state solutions of $-\Delta u=u^p-u^q$.
  J. Differential Equations {\bf 95} (1992), 218-239.

\bibitem{Lions-I-1} {\sc P.-L. Lions}, {The concentration-compactness
    principle in the calculus of variations. The locally compact
    case. I.}  Ann. Inst. H. Poincar\'e Anal. Non Lin\'eaire {\bf 1}
  (1984), 109-145.

\bibitem{Merle-I} {\sc F. Merle, L. Peletier}, Asymptotic behavior of
  positive solutions of elliptic equations with critical and
  supercritical growth.  I. The radial case.  Arch. Rational
  Mech. Anal. {\bf 112} (1990), no. 1, 1-19.

\bibitem{Merle-II} {\sc F. Merle, L. Peletier}, Asymptotic behavior
  of positive solutions of elliptic equations with critical and
  supercritical growth.  II. The non-radial case.  J. Funct. Anal. {\bf
    105} (1992), no. 1, 1-41.

\bibitem{Muratov} {\sc C. Muratov, E. Vanden--Eijnden}, Breakup of
  Universality in the Generalized Spinodal Nucleation Theory.
  J. Stat. Phys. {\bf 114} (2004), 605--623.

\bibitem{Pokhozhaev} {\sc S. Poho\^zaev}, Eigenfunctions of the equation
  $\Delta u+\lambda f(u)=0$.  Dokl. Akad. Nauk SSSR {\bf 165} (1965)
  36--39.

\bibitem{Rey} {\sc O. Rey}, Proof of two conjectures of H. Br\'ezis
  and L. A. Peletier.  Manuscripta Math. {\bf 65} (1989), 19--37.

\bibitem{Stampacchia}
{\sc G.\, Stampacchia},
Le probl\`eme de Dirichlet pour les \'equations elliptiques du second ordre \`a coefficients discontinus.
Ann. Inst. Fourier (Grenoble) {\bf 15} (1965) fasc. 1, 189--258.


\bibitem{Strauss} {\sc W.A.\,Strauss}, Existence of solitary waves in
  higher dimensions.  Comm. Math. Phys. {\bf 55} (1977), 149--162.

\bibitem{Struwe} {\sc M.\,Struwe}, Variational Methods.
  Springer--Verlag, 1990.

\bibitem{Serrin-Tang} {\sc J.\,Serrin, M.\,Tang}, Uniqueness of ground
  states for quasilinear elliptic equations.  Indiana Univ. Math. J.
  {\bf 49} (2000), 897--923.


\bibitem{unger84} {\sc C.~Unger and W.~Klein}, Nucleation theory
    near classical spinodal. Phys. Rev. B, 29 (1984), 2698--2708.


\bibitem{vansaarloos92} {\sc W.~van Saarloos and P.~C. Hohenberg},
  Fronts, pulses, sources and sinks in generalized complex
    ginzburg-landau equations. Physica D, 56 (1992), 303--367.


\bibitem{Willem}
{\sc M.\,Willem},
Minimax theorems.
Birkh\"auser, 1996.

\end{thebibliography}


\end{document}